\newcommand{\mR}{\mathbb{R}}
\newcommand{\mQ}{\mathbb{Q}}
\newcommand{\mC}{\mathbb{C}}
\newcommand{\mG}{\mathbb{G}}
\newcommand{\mN}{\mathbb{N}}
\newcommand{\mP}{\mathbb{P}}
\newcommand{\mU}{\mathbb{U}}
\newcommand{\cC}{\mathcal{C}}
\newcommand{\cT}{\mathcal{T}}
\newcommand{\cO}{\mathcal{O}}
\newcommand{\wt}[1]{\widetilde{#1}}
\newcommand{\cL}{\mathcal{L}}
\DeclareSymbolFont{cyrletters}{OT2}{wncyr}{m}{n}
\DeclareMathSymbol{\Sha}{\mathalpha}{cyrletters}{"58}
\DeclareMathSymbol{\Sha}{\mathalpha}{cyrletters}{"58}
\newcommand{\brk}[1]{ \left\lbrace #1 \right\rbrace }
\newcommand{\pwr}[1]{ \left( #1 \right) }
\newcommand{\sH}{{\mathscr H}}
\newcommand{\sO}{{\mathscr O}}
\newcommand{\sU}{{\mathscr V}}
\newcommand{\sV}{{\mathscr V}}
\newcommand{\uusV}{\underline{\underline{\mathscr V}}}
\newcommand{\sW}{{\mathscr W}}
\newcommand{\sX}{{\mathscr X}}
\newcommand{\sY}{{\mathscr Y}}
\newcommand{\sZ}{{\mathscr Z}}
\theoremstyle{theorem}
\numberwithin{equation}{subsection}
\newtheorem{thmx}{\textsc{Theorem}}
\newtheorem{corox}[thmx]{\textsc{Corollary}}
\newtheorem{theorem}[subsection]{Theorem}
\newtheorem{lemma}[subsection]{Lemma}
\newtheorem{corollary}[subsection]{Corollary}
\newtheorem{conjecture}[subsection]{Conjecture}
\newtheorem{prop}[subsection]{Proposition}
\numberwithin{equation}{subsection}
\theoremstyle{definition}
\newtheorem{definition}[subsection]{\text{Definition}}
\newtheorem{example}[subsection]{Example}
\newtheorem{remark}[subsection]{Remark}
\theoremstyle{remark}
\numberwithin{equation}{subsection} \numberwithin{figure}{section}
\DeclareMathOperator{\an}{an}
 \DeclareMathOperator{\Spec}{Spec}
\DeclareMathOperator{\Hilb}{Hilb}
\DeclareMathOperator{\Hom}{Hom}
\DeclareMathOperator{\Int}{Int}
\DeclareMathOperator{\triv}{triv}
\DeclareMathOperator{\Sp}{Sp}
\DeclareMathOperator{\red}{red}
\newcommand{\cdef}[1]{\textsf{\textit{#1}}}
\renewcommand{\leq}{\leqslant}
\renewcommand{\geq}{\geqslant}
\DeclareMathOperator{\Mor}{Mor}
\DeclareMathOperator{\REMor}{Mor_{ext}}
\DeclareMathOperator{\GR}{ext}
\begin{document}

\title{Boundedness of Hyperbolic Varieties}

\author{Jackson S. Morrow}
\address{Jackson S. Morrow \\
	Department of Mathematics \\
	University of California, Berkeley \\
	810 Evans Hall, Berkeley, CA 94720}
\email{jacksonmorrow@berkeley.edu}

\begin{abstract}
Let $k$ be an algebraically closed field of characteristic zero, and let $X/k$ be a projective variety. 
The conjectures of Demailly--Green--Griffiths--Lang posit that every integral subvariety of $X$ is of general type if and only if $X$ is algebraically hyperbolic i.e., for any ample line bundle $\mathcal{L}$ on $X$ there is a real number $\alpha(X,\mathcal{L})$, depending only on $X$ and $\mathcal{L}$, such that for every smooth projective curve $C/k$ of genus $g(C)$ and every $k$-morphism $f\colon C\to X$,  $\text{deg}_Cf^*\mathcal{L} \leq \alpha(X,\mathcal{L})\cdot g(C) $ holds.

In this work, we prove that if $X/k$ is a  projective variety such that every integral subvariety is of general type,  then for every ample line bundle $\mathcal{L}$ on $X$ and every integer $g\geq 0$, there is an integer $\alpha(X,\mathcal{L},g)$, depending only on $X,\mathcal{L},$ and $g$, such that for every smooth projective curve $C/k$ of genus $g$ and every $k$-morphism $f\colon C\to X$, the inequality $\text{deg}_Cf^*\mathcal{L} \leq \alpha(X,\mathcal{L},g)$ holds, or equivalently, the Hom-scheme $\underline{\text{Hom}}_k(C,X)$ is projective. 
\end{abstract}

\dedicatory{Dedicated to my wife Lea for her unbounded love, support, and hyperbolicity}

\subjclass
{11G99 
(32Q45, 
14G05, 
14J29, 
14C05)} 

\keywords{Hyperbolicity, Demailly--Lang conjectures, non-Archimedean geometry, Hom-schemes}
\date{\today}
\maketitle


\section{\bf Introduction}
\label{sec:intro}
In 1922, Mordell \cite{Mordell1992rational} conjectured that for a smooth projective curve of genus $g\geq 2$ defined over a number field $F$,  its set of $F$-rational points is finite. 
This question guided research in arithmetic geometry for the next several decades and was first proved by Faltings \cite{Faltings2} in 1983. 
There are several other proofs of this conjecture due to Vojta \cite{Vojta:Mordell}, Bombieri \cite{Bombieri:Mordell},  and Lawrence--Venkatesh \cite{LawrenceVenkatesh:Mordell}, and even uniform versions by \cite{KRZB:Uniform} and \cite{DGH:Uniform, Kuhne:Equidistribution}.

In \cite{Lang:HyperbolicDiophantineAnalysis}, Lang proposed a higher dimensional analogue of Mordell's conjecture, and the most optimistic form of this conjecture states that for a smooth projective variety $X$ of general type (\autoref{defn:generaltype}) defined over a number field $F$, the set of $F$-rational points $X(F)$ is not Zariski dense; when $X$ is $1$-dimensional, this recovers Mordell's conjecture. 
By work of Faltings \cite{FaltingsLang1, FaltingsLang2}, Kawamata \cite{Kawamata}, and Ueno \cite{Ueno}, we know Lang's conjecture holds for closed subvarieties of abelian varieties, but outside of this setting, the conjecture remains wide open. 
This conjecture of Lang represents one of the deepest and longest standing questions in arithmetic geometry.

Instead of studying the arithmetic aspects of varieties of general type, one can analyze their geometric, complex analytic, and $p$-adic analytic properties and hope to use this information to prove results concerning their arithmetic. 
In all of these settings, there are conjectural characterizations due to Demailly \cite{Demailly}, Javanpeykar--Kamenova \cite{JKam}, Lang \cite{Lang:HyperbolicDiophantineAnalysis}, Green--Griffiths \cite{GreenGriffiths:Conj}, Cherry \cite{Cherry, CherryKoba},  Javanpeykar--Vezanni \cite{JVez}, Rosso and the author \cite{MorrowNonArchGGLV, MorrowRosso:Special} of being of general type, namely:~pseudo-algebraically hyperbolic, pseudo-bounded, pseudo-groupless (geometric); pseudo-Brody hyperbolic, pseudo-Kobayashi hyperbolic (complex analytic); pseudo-$K$-analytically Brody hyperbolic ($p$-adic analytic). 
The word ''pseudo'' appearing in each of these terms refers to the fact that there should exist some proper closed subvariety of $X$, the so called special locus, for which morphisms from certain kinds of objects factor through. 
Understanding the equivalence of these properties reduces to showing that various special loci coincide. 
We refer the reader to the survey \cite{Javanpeykar:Survey} for more details.

In this work, we study varieties such that every integral subvariety is of general type,  and we prove that such a variety is bounded (\autoref{defn:bounded}).  This result fits into the conjectures of Demailly and Lang as follows.

Let $L$ be an algebraically closed field of characteristic zero and let $X/L$ be a projective variety. 
Conjectures of Demailly and Lang claim that the condition that every integral subvariety of $X$ is of general type is equivalent to that condition that $X$ is algebraically hyperbolic i.e., for every ample line bundle $\mathcal{L}$ on $X$, there is a real number $\alpha({X,\mathcal{L}})$ depending only on $X$ and $\mathcal{L}$ such that for every smooth projective curve $C$ over $L$ and every $L$-morphism $f\colon C\to X$, the inequality $\deg_Cf^*\mathcal{L} \leq \alpha({X,\mathcal{L}})g(C)$ holds where $g(C)$ is the genus of $C$. 
By work of \cite{Bloch26, Ochiai77, Kawamata, Demailly} (respectively, \cite{Ein:SubvaritiesGeneral,Voisin:ConjectureClemens2, Pacienz:SubvarietiesGeneralType}), we know that this conjecture holds for a closed subvariety of an abelian variety (respectively, general hypersurfaces of sufficiently large degree). 
This conjecture also holds for projective varieties which admit a large local system by \cite{Brunebarbe:HyperbolicityLarge}. 

The main result of this paper states that if every integral subvariety of $X$ is of general type,  there exists an integer $\alpha(X,\mathcal{L},g(C))$ depending on $X,\cL,$ and $g(C)$, such that $\deg_Cf^*\mathcal{L} \leq \alpha(X,\mathcal{L},g(C)).$

\begin{thmx}\label{xthm:main1}
Let $L$ be an algebraically closed field of characteristic zero, and let $X/L$ denote a  projective variety. 
If every integral subvariety of $X$ is of general type, then for every ample line bundle $\mathcal{L}$ on $X$ and every integer $g\geq 0$, there is an integer $\alpha(X,\mathcal{L},g)$ such that for every smooth, projective curve $C/L$ of genus $g$ and every $L$-morphism $f\colon C\to X$, the inequality $\deg_Cf^*\mathcal{L} \leq \alpha(X,\mathcal{L},g)$ holds.
\end{thmx}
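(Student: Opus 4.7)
The strategy is to argue by contradiction via non-Archimedean analytic limit methods, deriving the contradiction from a hyperbolicity principle whose hypotheses are precisely those supplied by the general-type assumption on every integral subvariety.

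Suppose the conclusion fails for some triple $(X,\mathcal{L},g)$. Using that $M_g$ is of finite type and applying a Noetherian argument to the irreducible components of the relative Hom scheme $\underline{\Hom}_{M_g}(\cC_g, X\times M_g)$ (each of which is quasi-projective over $M_g$ via its embedding in a relative Hilbert scheme of graphs), one reduces to the case of a fixed smooth projective curve $C/L$ of genus $g$ equipped with a sequence of $L$-morphisms $f_n\colon C\to X$ satisfying $\deg_C f_n^*\mathcal{L}\to\infty$; equivalently, $\underline{\Hom}_L(C,X)$ has irreducible components of unbounded $\mathcal{L}$-degree.

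A standard spreading-out / Lefschetz argument lets us replace $L$ by a countable algebraically closed subfield $L_0$ containing a model of $(C,X,\mathcal{L})$ and all the $f_n$, and fix an embedding $L_0\hookrightarrow K$ into a complete algebraically closed non-Archimedean field $K$ (for instance $K=\mathbb{C}_p$, or the completed algebraic closure of a Laurent-series field). The key input is then a non-Archimedean hyperbolicity principle, developed in the author's earlier work \cite{MorrowNonArchGGLV, MorrowRosso:Special} and in \cite{JVez}: since every integral subvariety of $X$ is of general type, the Berkovich analytification $X_K^{\mathrm{an}}$ is pseudo-$K$-analytically Brody hyperbolic, meaning that every non-constant $K$-analytic morphism to $X_K^{\mathrm{an}}$ from a Tate curve, from $\mathbb{G}_{m,K}^{\mathrm{an}}$, or from a $K$-analytic abeloid has image contained in a fixed proper closed special subvariety $\Delta(X)\subsetneq X$.

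From the unbounded sequence $(f_n)$ one extracts, via Berkovich compactness applied to the graphs $\Gamma_{f_n}\subset C_K\times X_K$ inside a suitable Hilbert- or Chow-analytification, a non-constant $K$-analytic morphism $\varphi\colon T\to X_K^{\mathrm{an}}$ from a non-hyperbolic Berkovich source $T$ (arising from the degeneration of the high-degree graphs) whose image is \emph{not} contained in $\Delta(X)$. This contradicts the hyperbolicity principle, so $\underline{\Hom}_L(C,X)$ must be bounded; since each connected component is quasi-projective, boundedness upgrades to projectivity and yields the degree bound $\alpha(X,\mathcal{L},g)$. The main obstacle is precisely the extraction of $\varphi$: because $\deg f_n^*\mathcal{L}\to\infty$, one cannot directly extract a convergent subsequence inside $\underline{\Hom}_K(C_K,X_K)^{\mathrm{an}}$ itself, and one must analyze the limit cycle of the $\Gamma_{f_n}$ carefully, ruling out the possibility that every component of the limit either is contracted into $\Delta(X)$ or already corresponds to a morphism of bounded $\mathcal{L}$-degree.
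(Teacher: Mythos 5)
Your proposal correctly identifies that one should work over a countable base field, pass to the Berkovich analytification over the completion of $\overline{k((t))}$, and leverage the hyperbolicity hypothesis non-Archimedeanly. However, the proof has a genuine gap at its center — one you acknowledge but underestimate — and it misdiagnoses what the contradiction should look like.

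The critical missing idea is the ``extraction of $\varphi$''. You propose to derive a non-constant $K$-analytic map $\varphi\colon T\to X_K^{\an}$ from a non-hyperbolic source $T$ out of the degenerating sequence $\Gamma_{f_n}$ of graphs, by analyzing the limit cycle. This is a Brody-type rescaling argument, and the fundamental difficulty is that \emph{no such lemma exists in the non-Archimedean setting}: there is no uniformization or reparametrization of the Berkovich disk that lets a sequence of unbounded-degree algebraic maps limit onto a non-constant analytic curve, and as the degree tends to infinity the graphs leave every bounded-degree component of the Chow/Hilbert scheme, so the family has no accumulation point in any single quasi-compact parameter space. The paper's proof is precisely a replacement for this unavailable lemma: it introduces the $K$-Kobayashi pseudo-metric (\autoref{defn:KKobayashipseudometric}), proves it is a genuine metric inducing the Berkovich topology when $X$ is hyperbolic (\autoref{xthm:HyperbolicImpliesLeaIntro} and \autoref{xthm:LeaimpliesBarthintro}), shows all morphisms $C_K^{\an}\to X_K^{\an}$ are distance-decreasing for suitable choices (\autoref{lemma:distancedecreasing}), and then applies the Arzel\`a--Ascoli theorem directly to conclude compactness of $\underline{\Hom}_K(C_K,X_K)^{\an}$ — a direct proof, not a contradiction.

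Separately, the hyperbolicity input you invoke is too weak. You appeal to (pseudo-)$K$-analytic Brody hyperbolicity of $X_K^{\an}$ following from the general-type hypothesis; since every integral subvariety of $X$ is general type, the special locus $\Delta(X)$ is empty, so this amounts to $K$-analytic Brody hyperbolicity, which by \cite[Corollary 3.14]{JVez} is equivalent to $X$ being \emph{groupless}. But grouplessness is the weakest condition in \autoref{conj:DGGJKL}, and the implication ``groupless $\Rightarrow$ bounded'' is exactly the open conjecture. Even granting your extraction of $\varphi$, you would also need the limit source $T$ to be the analytification of a connected algebraic group in order to contradict $K$-analytic Brody hyperbolicity, and nothing in a cycle-limit argument forces $T$ to have that structure. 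The paper's $K$-Lea hyperbolicity is a strictly stronger condition tailored to interact with Arzel\`a--Ascoli, and proving that hyperbolicity implies $K$-Lea hyperbolicity (via normal families and Kobayashi--Ochiai finiteness, Section \ref{sec:proofLeaHyper}) is the technical heart of the paper that your argument would have to reproduce in some form.
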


\autoref{xthm:main1} is significant progress towards the conjectures of Demailly and Lang.  
The discrepancy between our \autoref{xthm:main1} and the Demailly--Lang conjectures is the dependency on $g(C)$ in the bound; the conjectures require the bound to be linear in $g(C)$ whereas our result does not make any claims concerning the behaviour of $g(C)$ in the bound. 
While we do not prove the linearity of $g(C)$ in our bound, our result provides a bound \textit{uniform} in $g(C)$, and it applies to \textit{any}  projective algebraic variety for which every integral subvariety of $X$ is of general type. 
We also remark that \autoref{xthm:main1} represents progress towards the function field analogue of Vojta's conjecture (cf.~\cite[Conjecture 7]{RousseauTurchetWang:Nonspecial}), however we will not explain this relationship further. 

We record a few corollaries of \autoref{xthm:main1}. 
First,  we enhance \autoref{xthm:main1} to an equivalence in the setting where $X$ is a surface. 

\begin{corox}\label{xcoro:main1}
Let $L$ be an algebraically closed field of characteristic zero, and let $X/L$ denote a  projective variety of dimension at most two. Then the following are equivalent:
\begin{enumerate}
\item every integral subvariety of $X$ is of general type;
\item  for every ample line bundle $\mathcal{L}$ on $X$ and every integer $g\geq 0$, there is an integer $\alpha(X,\mathcal{L},g)$ such that for every smooth, projective curve $C/L$ of genus $g$ and every $L$-morphism $f\colon C\to X$, the inequality $\deg_Cf^*\mathcal{L} \leq \alpha(X,\mathcal{L},g)$ holds i.e., $X$ is $1$-bounded over $L$ (\autoref{defn:bounded});
\item $X$ is groupless over $L$ (\autoref{defn:groupless}). 
\end{enumerate}
\end{corox}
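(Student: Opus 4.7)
The plan is to establish the three implications $(1) \Rightarrow (2) \Rightarrow (3) \Rightarrow (1)$ separately. The first implication is precisely the conclusion of \autoref{xthm:main1}, so no additional work is required there.

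For $(2) \Rightarrow (3)$, which in fact holds in any dimension, I would argue that $1$-boundedness is incompatible with the existence of a non-constant morphism from a connected algebraic group $G$ over $L$ to $X$. By Chevalley's structure theorem, together with the observation that a projective variety admits no non-constant morphism from $\mathbb{G}_a$ in characteristic zero, one reduces to the cases where $G = \mathbb{G}_m$ or $G$ is an abelian variety. In the former case, a non-constant $\mathbb{G}_m \to X$ extends to a non-constant $\mathbb{P}^1 \to X$, and precomposing with the degree-$n$ covers $\mathbb{P}^1 \to \mathbb{P}^1$, $z \mapsto z^n$, produces morphisms from smooth genus-$0$ curves whose $\mathcal{L}$-degree grows linearly in $n$, contradicting $1$-boundedness at $g=0$. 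In the latter case, pick an elliptic curve $E \subset G$ on which the map is non-constant and precompose with $[n]\colon E \to E$; the degree grows like $n^2$, contradicting $1$-boundedness at $g=1$.

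The implication $(3) \Rightarrow (1)$ is where the hypothesis $\dim X \leq 2$ is essential and where I expect the main difficulty. The case $\dim X \leq 1$ is immediate: a groupless smooth projective curve has genus at least $2$, and its only integral subvarieties are points and $X$ itself, all trivially of general type. When $\dim X = 2$, grouplessness first rules out any integral curve in $X$ of geometric genus at most $1$, since normalizing such a curve yields a non-constant morphism to $X$ from either $\mathbb{P}^1$ (and hence from $\mathbb{G}_m$) or from an elliptic curve. Thus every proper integral subvariety of $X$ is of general type.

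The remaining task, and the technical heart of the corollary, is to show that a groupless projective surface is itself of general type. My approach would be a contradiction argument based on the Enriques--Kodaira classification. If $\kappa(X) = -\infty$, then $X$ is uniruled and so contains rational curves. If $\kappa(X) = 1$, then $X$ admits an elliptic fibration, and smooth fibers supply non-constant maps from elliptic curves. If $\kappa(X) = 0$, then $X$ is, up to \'etale cover, abelian, bielliptic, Enriques, or K3; the first three cases visibly contain elliptic curves or admit a non-trivial morphism from an abelian variety. The genuinely delicate case is the K3 surface, and this is where I anticipate the most substantive input: one invokes the theorem of Chen--Gounelas--Liedtke guaranteeing that every projective K3 surface over $\mathbb{C}$, and hence over any algebraically closed field of characteristic zero by a spreading-out argument, contains a rational curve. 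In each Kodaira class one contradicts grouplessness, so $X$ must be of general type, completing the corollary.
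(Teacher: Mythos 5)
The paper's own proof is purely citational: $(1)\Rightarrow(2)$ is \autoref{xthm:main1}, $(2)\Rightarrow(3)$ is \cite[Proposition 4.4]{JKam} (restated in the paper as \autoref{lemma:grouplessequivgeometric}.(3)), and $(3)\Rightarrow(1)$ is \cite[Lemma 3.23]{JavanpeykarXie:Finiteness}. You attempt to unfold these citations; your $(3)\Rightarrow(1)$ argument via Enriques--Kodaira classification and Chen--Gounelas--Liedtke is essentially the content behind the Javanpeykar--Xie lemma and reads correctly (with the minor caveat that the reduction to a minimal model of $X$, rather than an \'etale cover, is what lets you apply the $\kappa=0$ classification, and any $(-1)$-curve already violates grouplessness). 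That direction is fine.

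Your $(2)\Rightarrow(3)$, however, has two genuine problems. First, the claim that a projective variety admits no non-constant morphism from $\mathbb{G}_a$ in characteristic zero is simply false: the inclusion $\mathbb{A}^1\hookrightarrow\mathbb{P}^1\subset X$ is already a counterexample whenever $X$ contains a rational curve, and it is precisely the presence of such morphisms that makes $\mathbb{P}^1$ non-groupless. Second, and more seriously, your reduction of the abelian-variety case to an elliptic curve $E\subset G$ does not work in general: a simple abelian variety of dimension at least two contains no elliptic curve whatsoever, so no such $E$ exists, and yet a non-constant map from such a variety still has to be ruled out. The correct fix is to pick any smooth curve $C\subset A$ (e.g.\ a complete-intersection curve) on which $f\colon A\to X$ is non-constant and precompose with $[n]\colon A\to A$; by the theorem of the cube, $[n]^*f^*\mathcal{L}\cong (f^*\mathcal{L})^{\otimes n^2}\otimes\mathcal{N}_n$ with $\mathcal{N}_n\in\Pic^0(A)$, and since $\deg_C\mathcal{N}_n=0$ one gets $\deg_C[n]^*f^*\mathcal{L}=n^2\deg_Cf^*\mathcal{L}$, contradicting $1$-boundedness at the fixed genus $g(C)$. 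Alternatively, one can dodge the reduction entirely by first invoking $1$-bounded $\Leftrightarrow$ bounded (\autoref{thm:equivalentbounded}), then applying \autoref{lemma:grouplessequivgeometric}.(2) to reduce to abelian varieties, and finally observing that $f\circ[n]\colon A\to X$ gives morphisms with unbounded Hilbert polynomial, contradicting boundedness at $\dim A$; this is in effect what the cited \cite[Proposition 4.4]{JKam} packages.
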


Second,  we show that Hom-schemes of hyperbolic varieties are projective schemes. 

\begin{corox}\label{xcoro:main2}
Let $L$ be an algebraically closed field of characteristic zero, and let $X/L$ be a  projective variety such that every integral subvariety is of general type. 
For any normal projective variety $Y/L$,  the Hom-scheme $\underline{\Hom}_L(Y,X)$ is projective. 
\end{corox}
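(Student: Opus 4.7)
The plan is to verify that $\underline{\Hom}_L(Y,X)$ is (i) quasi-projective, (ii) of finite type, and (iii) proper, which together will yield projectivity. Quasi-projectivity is standard via the closed embedding $f \mapsto \Gamma_f$ of $\underline{\Hom}_L(Y,X)$ into the Hilbert scheme of $Y \times X$, so the real work is in proving finite type and properness.

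First I would establish finite type. Fix a very ample $H$ on $Y$ and polarize $Y \times X$ by $p_1^*H + p_2^*\mathcal{L}$. Since $\Gamma_f \cong Y$ via the first projection, the Hilbert polynomial of $\Gamma_f$ is determined by the intersection numbers $n_a(f) := H^{d-a} \cdot (f^*\mathcal{L})^a$ for $0 \leq a \leq d := \dim Y$, with $n_0(f) = H^d$ fixed. I would then choose a smooth complete intersection curve $C \subset Y$ cut out by $d-1$ general members of $|mH|$ for some fixed $m \gg 0$; by Bertini applied to the (open, nonempty) smooth locus of $Y$, such a $C$ exists and has genus $g(C)$ depending only on $(Y,H,m)$. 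Applying \autoref{xthm:main1} to $f|_C$ yields
\[
m^{d-1} \cdot n_1(f) \;=\; \deg_C (f|_C)^*\mathcal{L} \;\leq\; \alpha(X,\mathcal{L},g(C)),
\]
a uniform bound on $n_1(f)$. Since $f^*\mathcal{L}$ is nef, the Khovanskii--Teissier log-concavity inequality $n_a(f)^2 \geq n_{a-1}(f) \cdot n_{a+1}(f)$ combined with $n_a(f) \in \mathbb{Z}_{\geq 0}$ propagates the bounds on $n_0,n_1$ to every $n_a(f)$: if $n_1(f) = 0$ then log-concavity forces $n_a(f) = 0$ for all $a \geq 1$, while if $n_1(f) \geq 1$ the recursion $n_{a+1} \leq n_a^2/n_{a-1}$ iteratively bounds each $n_a$. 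Only finitely many Hilbert polynomials can thus occur, so $\underline{\Hom}_L(Y,X)$ is of finite type.

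Next I would verify properness via the valuative criterion. Given a DVR $R$ over $L$ with fraction field $K$ and a morphism $f_K \colon Y_K \to X$, properness of $X$ and the fact that the closed fiber of $Y_R$ has codimension one yield a rational map $Y_R \dashrightarrow X$. The hypothesis forces $X$ to contain no rational curves (since $\mathbb{P}^1$ is not of general type), while $Y_R$ is normal because $Y/L$ is geometrically normal over the algebraically closed field $L$. I would then invoke the standard rigidity theorem that every rational map from a normal projective variety to a projective variety without rational curves extends to a regular morphism---proven by resolving indeterminacy on a desingularization and observing that exceptional divisors of the resolution are uniruled, hence would produce rational curves in $X$ unless contracted. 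This yields the desired extension $Y_R \to X$, completing the valuative criterion.

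The main obstacle I anticipate is the finite-type step. \autoref{xthm:main1} directly controls only the curve-type invariant $n_1(f)$ through restriction to a single complete intersection curve, and propagating this to all the higher intersection numbers rests delicately on the interaction between Khovanskii--Teissier log-concavity and the integrality of $n_a(f) \in \mathbb{Z}_{\geq 0}$, especially in the degenerate case $n_1(f) = 0$. A secondary subtlety is ensuring that smooth complete intersection curves exist on the normal (possibly singular) variety $Y$, which I would handle via Bertini on its smooth locus; one must also verify that Khovanskii--Teissier applies in the normal setting, which follows by pullback to a resolution of $Y$.
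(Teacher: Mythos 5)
Your overall strategy is sound, and the properness half (valuative criterion, extend the rational map $Y_R \dashrightarrow X$ using normality of $Y_R$ and absence of rational curves in $X$) is correct — though you are reproving from scratch material that the paper simply cites: the paper deduces Corollary B in two lines from \autoref{xthm:main1}, \autoref{thm:equivalentbounded} ($1$-bounded $\Rightarrow$ bounded, which is where the complete-intersection-curve trick lives in [JKam, Theorem 9.3]), and [JKam, Proposition 3.9] (bounded $+$ no rational curves $\Rightarrow$ projective Hom-scheme, which is your step (iii)).

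There is, however, a genuine gap in your finite-type step, in the degenerate case $n_1(f) = 0$. You assert that Khovanskii--Teissier log-concavity alone forces $n_a(f) = 0$ for all $a \geq 1$. It does not: from $n_1 = 0$ and $n_0 > 0$, the inequality $n_1^2 \geq n_0 n_2$ yields $n_2 = 0$, but the next inequality $n_2^2 \geq n_1 n_3$ reads $0 \geq 0$ and imposes nothing on $n_3$. The support of a nonnegative log-concave sequence need not be an interval — $(1,0,0,1)$ is log-concave — so log-concavity combined with integrality does not suffice. The conclusion you want is nonetheless true, but it requires a different input: for $H$ ample and $D = f^*\mathcal{L}$ nef on $Y$ (or a resolution of it), $D \cdot H^{d-1} = 0$ forces $D$ numerically trivial. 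This follows from the Hodge index theorem in the form that the pairing $(\alpha,\beta) \mapsto H^{d-2}\cdot\alpha\cdot\beta$ on $N^1(Y)_{\mathbb{R}}$ has signature $(1,\rho-1)$: since $D \perp H$ under this form and $D$ is nef (so $H^{d-2}\cdot D^2 \geq 0$), the form is both $\leq 0$ and $\geq 0$ on $D$, hence $D \equiv 0$ and $f$ is constant, so all $n_a(f) = 0$ for $a \geq 1$. Replacing the log-concavity claim with this argument (or simply citing [JKam, Theorem 9.3]) closes the gap. A parallel issue arises if $n_1(f) > 0$ but some intermediate $n_a(f) = 0$; the same Hodge-index input handles it.
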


Third,  we prove that a hyperbolic projective surface admits only finitely many non-constant maps defined over a fixed number field from a given variety. 

\begin{corox}\label{xcoro:main3}
Let $X$ be a projective surface over a finitely generated field $F$ of characteristic zero such that there is an embedding $F \hookrightarrow \mC$ with every integral subvariety of $X_{\mC}$ being of general type. Then, for every finitely generated extension $F'/F$ and every variety $V$ over $F'$, the set of non-constant rational maps $f\colon V \dashrightarrow X$ over $F'$ is finite.
\end{corox}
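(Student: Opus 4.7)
The plan is to reduce the statement to a question over $\mC$ and then perform a case analysis based on the dimension of the image of each rational map. I would first extend the given embedding $F\hookrightarrow\mC$ to an embedding $F'\hookrightarrow\mC$, which exists because $F'$ is a finitely generated extension of $F$ and $\mC$ has uncountable transcendence degree over $\mQ$. Since two distinct non-constant rational maps $V\dashrightarrow X$ over $F'$ yield two distinct rational maps after base change to $\mC$, it suffices to prove the finiteness statement when $F'=\mC$. By applying resolution of singularities to a projective compactification of $V$, I may further assume that $V$ is smooth projective over $\mC$.

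Next, for each non-constant rational map $f\colon V\dashrightarrow X$, the image $\overline{f(V)}\subset X$ is an irreducible closed subvariety of the surface $X$ of dimension either $2$ or $1$. When $\dim\overline{f(V)}=2$, the map $f$ is dominant; by hypothesis $X$ is itself a projective surface of general type, so the algebraic version of the Kobayashi--Ochiai theorem (due to Maehara) yields finiteness of such dominant $f$. When $\dim\overline{f(V)}=1$, the normalization $\widetilde C$ of the image is a smooth projective curve of genus $\geq 2$ by hypothesis, and $f$ factors uniquely as $V\dashrightarrow\widetilde C\to X$ with $V\dashrightarrow\widetilde C$ dominant rational. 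The de Franchis--Severi theorem then guarantees that the set of pairs $(\widetilde C, V\dashrightarrow\widetilde C)$ is finite up to isomorphism of $\widetilde C$.

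The core remaining task, and the main obstacle of the proof, is to bound the number of non-constant morphisms $\widetilde C\to X$ for each fixed smooth projective curve $\widetilde C$ of genus $\geq 2$. By \autoref{xcoro:main2}, the scheme $\underline{\Hom}_{\mC}(\widetilde C, X)$ is projective and in particular of finite type with finitely many irreducible components. I plan to rule out the possibility of a positive-dimensional component $T$ parameterizing non-constant morphisms as follows: pass to its normalization to obtain a normal projective variety, consider the evaluation family $T\times\widetilde C\to X$, and study the induced morphism of schemes $\widetilde C\to\underline{\Hom}_{\mC}(T, X)$, $c_0\mapsto(t\mapsto f_t(c_0))$. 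A second application of \autoref{xcoro:main2} to $T$ makes the target projective; combining the bounded-degree estimate of \autoref{xthm:main1}, the de Franchis--Severi theorem for maps from $T$ onto curves of general type in $X$, and the Kobayashi--Ochiai theorem for dominant morphisms $T\to X$, one concludes that the non-constant part of $\underline{\Hom}_{\mC}(T, X)$ is a finite set. Since $\widetilde C$ is irreducible, the evaluation morphism then factors either through the constant-morphism locus (identified with $X$, forcing all $f_t$ to coincide as a single morphism $c_0\mapsto x(c_0)$ and contradicting $\dim T>0$) or through a single non-constant $\psi\colon T\to X$ (so $f_t(c_0)=\psi(t)$ is independent of $c_0$, whence each $f_t$ is constant, contradicting non-constancy). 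Verifying the intermediate finiteness of non-constant $T\to X$ is the principal technical point and requires combining the bounded degree from \autoref{xthm:main1}, the projectivity from \autoref{xcoro:main2}, and classical finiteness results in a controlled recursion on the geometric invariants of the curves in $X$ that appear as images.
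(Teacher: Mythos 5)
Your very first reduction --- extending $F'\hookrightarrow\mC$ and concluding ``it suffices to prove the finiteness statement when $F'=\mC$'' --- is fatal, because the statement you are left to prove over $\mC$ is \emph{false}. The corollary is fundamentally arithmetic: it asserts finiteness of maps defined over a \emph{finitely generated} field, and this fails over $\mC$. Concretely, let $X = (C_1\times C_2)/G$ be a surface isogenous to a higher product, where $C_1,C_2$ are smooth projective curves of genus $\ge 2$ and $G$ acts freely (Beauville surfaces are examples defined over number fields). Then $X$ is a smooth projective surface in which every integral subvariety is of general type: $X$ has ample canonical bundle, and any curve in $X$ is dominated by a curve in $C_1\times C_2$, which contains no rational or elliptic curves since both projections from such a curve would have to be constant. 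Yet, writing $\pi\colon C_1\times C_2\to X$ for the quotient map, the rule $t\mapsto\bigl(c\mapsto\pi(t,c)\bigr)$ gives a positive-dimensional family of pairwise distinct non-constant morphisms $C_2\to X$ over $\mC$ (two parameters $t,t'$ give the same map only when $t'$ lies in the finite orbit of $t$ under the subgroup of $G$ acting trivially on $C_2$). Thus there are uncountably many non-constant maps $C_2\to X$ over $\mC$, and your ``core remaining task'' --- proving there are only finitely many non-constant maps $\widetilde C\to X$ for a fixed curve $\widetilde C$ --- has no solution by geometry over $\mC$.

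The paper does not attempt a geometric proof: it cites \cite[Theorem~1.5]{Javanpeykar:FiniteMapsSurfaces} (taking $Z=\emptyset$), together with \autoref{xcoro:main2} and \autoref{thm:equivalentbounded}. That cited theorem is genuinely arithmetic: boundedness (supplied here by \autoref{xcoro:main2}/\autoref{thm:equivalentbounded}) shows the relevant Hom-schemes are of finite type, and one then needs finiteness of $F'$-rational points on their positive-dimensional components, which ultimately rests on Mordell--Faltings for curves over finitely generated fields. That input cannot be removed, as the example above shows. As a secondary remark, even if the reduction to $\mC$ were available, the ``controlled recursion'' in your final paragraph is circular: to bound non-constant maps $T\to X$ with image a curve $D$, one must bound the generically injective maps $\widetilde D\to X$, which is exactly the problem you started with for the curve $\widetilde D$, and there is no decreasing invariant since $\widetilde D$ may well be isomorphic to $\widetilde C$ itself.
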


Fourth, we deduce finiteness of pointed morphisms into hyperbolic varieties, which can be viewed as a function field analogue of Lang's conjecture on finiteness of rational points over number fields for hyperbolic varieties. 

\begin{corox}\label{xcoro:main4}
Let $L$ be an algebraically closed field of characteristic zero, and let $X/L$ be a  projective variety such that every integral subvariety is of general type. 
For any normal variety $Y/L$ and point $y\in Y$ and every point $x\in X$, the set of maps $Y\to X$ sending $y\to x$ is finite.
\end{corox}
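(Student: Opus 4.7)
The strategy is to reduce to the case where $Y$ is smooth projective, realize the pointed Hom scheme as a closed subscheme of the Hom scheme---which is projective by \autoref{xcoro:main2}---and then rule out positive-dimensional components by a rigidity argument that invokes \autoref{xcoro:main2} a second time, applied to a smooth projective curve parametrizing any hypothetical non-trivial family. To reduce to $Y$ smooth projective, observe that because every integral subvariety of $X$ is of general type and $\mathbb{P}^1$ is not, $X$ contains no rational curves. Let $\pi\colon Y'\to Y$ be a resolution of singularities (Hironaka), fix $y'\in\pi^{-1}(y)$, and embed $Y'$ into a smooth projective variety $\widetilde{Y}$ via Nagata compactification followed by a further resolution. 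A pointed morphism $f\colon(Y,y)\to(X,x)$ pulls back to $f\circ\pi\colon Y'\to X$, and since $\widetilde{Y}$ is smooth and $X$ is proper with no rational curves, the induced rational map from $\widetilde{Y}$ to $X$ extends to a morphism $\widetilde{f}\colon\widetilde{Y}\to X$ with $\widetilde{f}(y')=x$. The assignment $f\mapsto\widetilde{f}$ is injective (recover $f\circ\pi$ by restriction to $Y'$, then use surjectivity of $\pi$), so it suffices to prove the statement with $\widetilde{Y}$ in place of $Y$.

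Now assume $Y$ is smooth projective. By \autoref{xcoro:main2} the Hom scheme $\underline{\Hom}_L(Y,X)$ is projective, so the pointed Hom scheme $P\colonequals\underline{\Hom}_L((Y,y),(X,x))$, being the scheme-theoretic fiber over $x$ of the evaluation morphism $\operatorname{ev}_y\colon\underline{\Hom}_L(Y,X)\to X$, is also projective. It suffices to show $\dim P=0$. Suppose for contradiction $\dim P\geq 1$; normalizing a one-dimensional integral closed subscheme inside a positive-dimensional component of $P$ yields a smooth projective curve $B$ and a non-constant morphism $B\to P$. This corresponds to a morphism $\Phi\colon B\times Y\to X$ with $\Phi(B\times\{y\})=\{x\}$ and such that $b\mapsto\Phi(b,-)$ is non-constant in $\underline{\Hom}_L(Y,X)$.

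By the universal property of $\underline{\Hom}_L(B,X)$, the morphism $\Phi$ also produces $\psi\colon Y\to\underline{\Hom}_L(B,X)$, $y'\mapsto\Phi(-,y')$, and by \autoref{xcoro:main2} applied to $B$, the target $\underline{\Hom}_L(B,X)$ is projective. The image $\psi(y)$ is the constant map at $x$, which lies in the closed subscheme $X\hookrightarrow\underline{\Hom}_L(B,X)$ parametrizing constant maps. For any ample line bundle $\mathcal{L}$ on $X$, the pullback degree $\phi\mapsto\deg_B\phi^*\mathcal{L}$ is locally constant on $\underline{\Hom}_L(B,X)$ and vanishes precisely on the constant maps, so the constant locus is a union of connected components. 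Since $Y$ is connected and $\psi(y)$ lies in this locus, the entire image $\psi(Y)$ does too; hence $\Phi(-,y')$ is constant for every $y'\in Y$. This forces $\Phi$ to factor through the projection $B\times Y\to Y$, contradicting non-constancy of $b\mapsto\Phi(b,-)$. Thus $\dim P=0$, and being projective of finite type, $P$ is finite. The central difficulty---and the point where the argument crucially uses the paper's main results---is this rigidity step: without \autoref{xcoro:main2} applied to the auxiliary curve $B$, the target $\underline{\Hom}_L(B,X)$ would not be of finite type, and the clopen-ness of the constant locus would give no leverage.
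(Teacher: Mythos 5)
Your proof is correct, and it takes a genuinely different route from the paper: the paper's proof of \autoref{xcoro:main4} is a one-line citation to \autoref{xthm:main1} together with \cite[Remark~4.3 \& Lemma~4.6]{JKam}, whereas you give a self-contained argument. Your reduction to $Y$ smooth projective (resolution, Nagata compactification, plus the fact that a rational map from a smooth variety to a proper variety without rational curves extends to a morphism) and your rigidity step (passing from a curve $B\to P$ in the pointed Hom scheme to $\psi\colon Y\to \underline{\Hom}_L(B,X)$ and showing $\psi(Y)$ lands in the clopen locus of constant maps) are sound, and they essentially unpack the content of the Javanpeykar--Kamenova lemma that the paper cites. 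What the citation buys is brevity; what your argument buys is transparency about where \autoref{xcoro:main2} is actually being used.

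One inaccuracy in your commentary, though it does not affect the validity of the proof: you claim the argument ``crucially uses'' \autoref{xcoro:main2} a second time, applied to the auxiliary curve $B$, because otherwise $\underline{\Hom}_L(B,X)$ would not be of finite type and ``the clopen-ness of the constant locus would give no leverage.'' This is not right. The decomposition $\underline{\Hom}_L(B,X)=\bigsqcup_h\underline{\Hom}^h_L(B,X)$ by Hilbert polynomial exists for \emph{any} projective targets, independent of boundedness, and the degree $\phi\mapsto\deg_B\phi^*\mathcal L$ is determined by the Hilbert polynomial of the graph, hence locally constant regardless of whether the Hom scheme is of finite type. Since for $B$ a smooth projective curve and $\mathcal L$ ample one has $\deg_B\phi^*\mathcal L>0$ precisely when $\phi$ is non-constant, the constant locus $X\hookrightarrow\underline{\Hom}_L(B,X)$ is a single Hilbert-polynomial stratum, hence clopen, with no hyperbolicity input needed. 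The hyperbolicity of $X$ (via \autoref{xcoro:main2}) enters your proof exactly once, in showing $\underline{\Hom}_L(Y,X)$ is projective so that $P$ is a projective scheme and $\dim P=0$ forces $P$ finite.
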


Finally, we have that the arithmetic persistence conjecture (\cite[Conjecture 1.5]{Javanpyekar:ArithmeticHyperbolicity}) holds for hyperbolic varieties. 

\begin{corox}\label{xcoro:main5}
The arithmetic persistence conjecture holds for every  projective variety such that every integral subvariety is of general type. 
\end{corox}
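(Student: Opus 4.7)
The plan is to deduce \autoref{xcoro:main5} from the projectivity of Hom-schemes established in \autoref{xcoro:main2} together with the finiteness of pointed morphisms given by \autoref{xcoro:main4}, following the standard reduction originating in work of Javanpeykar: boundedness of Hom-schemes implies arithmetic persistence. Concretely, fix a smooth projective $X_0$ over $\overline{\QQ}$ such that every integral subvariety of $X := X_{0,\overline{\QQ}}$ is of general type, and assume $X_0$ is arithmetically hyperbolic over $\overline{\QQ}$; we must show $X_{0,L}$ is arithmetically hyperbolic over $L$ for every algebraically closed field $L$ of characteristic zero containing $\overline{\QQ}$.

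Suppose not: for some such $L$ there exists a finitely generated $\ZZ$-subalgebra $A \subseteq L$, a finite type model $\mathcal{X}/A$, and an infinite subset $\{P_i\}_{i \in \NN} \subseteq \mathcal{X}(A)$ of integral points. I would apply the standard spreading-out formalism (due to Javanpeykar and Vojta) to organize an infinite subfamily of these points into a non-constant algebraic family of morphisms $Y \to X_0$ for some positive-dimensional normal projective variety $Y$ over $\overline{\QQ}$; equivalently, this yields a non-constant morphism from $Y$ into $\underline{\Hom}_{\overline{\QQ}}(T, X_0)$ for a suitable connected normal projective ``test'' variety $T$ over $\overline{\QQ}$.

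By \autoref{xcoro:main2} this Hom-scheme is projective, and picking any $\overline{\QQ}$-point $t_0 \in T$ together with its image $x_0 \in X_0$, \autoref{xcoro:main4} forces the fiber of the evaluation-at-$t_0$ morphism $\underline{\Hom}_{\overline{\QQ}}(T, X_0) \to X_0$ over $x_0$ to be a finite set of $\overline{\QQ}$-points. These two facts together contradict the positive-dimensionality of the family produced in the previous paragraph, so no such $L$ can exist, and arithmetic persistence for $X_0$ follows.

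The principal obstacle where care is required is the spreading-out step: one must turn an infinite sequence of $A$-integral $L$-points on $\mathcal{X}$ into an honestly positive-dimensional algebraic family of $\overline{\QQ}$-morphisms into $X_0$. This is a standard argument using the boundedness of parametrizing schemes of bounded-degree subvarieties (e.g.\ Hilbert or Chow schemes) together with Noetherian arguments on finitely generated subrings of $L$; the key input ensuring positive-dimensionality of the resulting family is that $L$ has positive transcendence degree over $\overline{\QQ}$ (otherwise $L = \overline{\QQ}$ and there is nothing to prove). Once this reduction is in place, \autoref{xcoro:main2} and \autoref{xcoro:main4} close the argument immediately.
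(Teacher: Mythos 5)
The paper's proof of this corollary is a one-line citation: it deduces arithmetic persistence from \autoref{xcoro:main4} together with Theorem~1.6 of Javanpeykar's \emph{Arithmetic hyperbolicity} paper, where that theorem says precisely that arithmetic hyperbolicity over $\overline{\QQ}$ plus finiteness of pointed morphisms over every larger algebraically closed field implies arithmetic hyperbolicity over every such field. You instead attempt to re-derive the content of that theorem from scratch, which is a genuinely different and much longer route, and the sketch as written has a gap.

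The gap is that your argument never actually uses the hypothesis that $X_0$ is arithmetically hyperbolic over $\overline{\QQ}$, yet the conclusion is conditional on that hypothesis; without using it the finiteness of pointed morphisms alone cannot yield persistence. Concretely, in the spreading-out step you produce a family of morphisms $T \to X_0$ parametrized by a positive-dimensional $Y$, and you then ``pick any $\overline{\QQ}$-point $t_0 \in T$ together with its image $x_0 \in X_0$.'' But ``its image'' is not well-defined: the family is non-constant, so different members of the family can (and a priori do) send $t_0$ to different points, and then the fiber of evaluation over $x_0$ being finite gives no contradiction. The missing step is the pigeon-hole dichotomy at the heart of Javanpeykar's proof: specialize each integral point at a fixed $\overline{\QQ}$-point $a_0$ of $\Spec A$; either infinitely many of the resulting $\overline{\QQ}$-points of $X_0$ are distinct, which violates arithmetic hyperbolicity of $X_0$ over $\overline{\QQ}$, or infinitely many of the sections agree at $a_0$, which forces them into a single fiber of the evaluation map and contradicts \autoref{xcoro:main4}. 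Only the second horn of this dichotomy is what you wrote down, and the first horn is exactly where the over-$\overline{\QQ}$ hyperbolicity hypothesis enters. Separately, the invocation of \autoref{xcoro:main2} is superfluous to the argument; what is needed is only \autoref{xcoro:main4}, and the cleaner move is simply to cite the existing theorem encapsulating the spreading-out and pigeon-hole steps rather than re-sketch them.
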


\subsection*{Main contributions}
Our proof of \autoref{xthm:main1} uses equi-characteristic non-Archimedean geometry, and the novel input is our construction and detailed study of a new, non-Archimedean analogue of the Kobayashi pseudo-metric for constant Berkovich analytic spaces.

To state these constructions, let $k$ be an algebraically closed, \textit{countable} field of characteristic zero,  let $K$ denote the completion of an algebraic closure of the formal Laurent series with coefficients in $k$ with respect to the $t$-adic valuation, let $X/k$ be a  projective variety, let $X_K$ denote the base change of $X$ to $K$, and let $X_K^{\an}$ denote the Berkovich analytification of $X_K$.  (Although there are various non-Archimedean analytifications, we work with Berkovich spaces due to their nice topological properties (cf.~\autoref{rem:WhyBerko})). 

In Section \ref{sec:pseudo}, we define a pseudo-metric $d_{X_K^{\an}}$ on $X_K^{\an}$, called the $K$-Kobayashi pseudo-metric, whose construction shares similarities to the classical Kobayashi pseudo-metric \cite{Kobayashi:Intrinsic}. However there are several delicate and technical issues which occur in order to achieve an actual pseudo-metric whose properties are similar to those of the classical Kobayashi pseudo-metric.  
Roughly, we say that $X_K^{\an}$ is $K$-Lea hyperbolic\footnote{The name $K$-Lea hyperbolic is an homage to my wife, Lea Beneish.  We refer the reader to the Acknowledgements for more details.}(or:~$K$-analytically Kobayashi hyperbolic) if the $K$-Kobayashi pseudo-metric is in fact a metric; we refer the reader to \autoref{defn:Leahyper} for a precise definition.

We justify the claim that our pseudo-metric behaves like the Kobayashi pseudo-metric through two results. 
First, we prove a non-Archimedean analogue of a theorem of Barth \cite{Barth:Standard} concerning the metrizability of a connected complex space with respect to the classical Kobayashi pseudo-metric. 

\begin{thmx}\label{xthm:LeaimpliesBarthintro}
 If $X_K^{\an}$ is $K$-Lea hyperbolic, then the $K$-Kobayashi metric defines the Berkovich $K$-analytic topology.   
\end{thmx}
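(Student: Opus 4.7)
The plan is to leverage the compactness of $X_K^{\an}$, which follows from the projectivity of $X$, in order to reduce the coincidence of the two topologies to a single semi-continuity statement for $d_{X_K^{\an}}$. Write $\tau_B$ for the Berkovich topology on $X_K^{\an}$ and $\tau_d$ for the topology induced by the metric $d_{X_K^{\an}}$.

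The general scheme runs as follows. Since $X/k$ is projective, $X_K/K$ is projective, and $X_K^{\an}$ is a compact Hausdorff space under $\tau_B$. By the hypothesis of $K$-Lea hyperbolicity, $d_{X_K^{\an}}$ is an honest metric, so $\tau_d$ is also Hausdorff. Once I establish that the identity map $(X_K^{\an},\tau_B)\to(X_K^{\an},\tau_d)$ is continuous, the fact that it is a continuous bijection from a compact space to a Hausdorff space forces it to be a homeomorphism, yielding $\tau_B=\tau_d$.

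The technical core is to show that $d_{X_K^{\an}}\colon X_K^{\an}\times X_K^{\an}\to\mR_{\geq 0}$ is upper semi-continuous with respect to $\tau_B\times\tau_B$, which is equivalent to continuity of the identity map above (since the sub-basic $\tau_d$-open sets $\{y : d_{X_K^{\an}}(x,y)<\epsilon\}$ would then be $\tau_B$-open). I plan to argue this directly from the chain-of-disks definition given in Section~\ref{sec:pseudo}: given $x,y\in X_K^{\an}$ and $\eta>0$, fix a chain of $K$-analytic maps from disks realizing a value within $\eta$ of $d_{X_K^{\an}}(x,y)$; then, for $x',y'$ varying in sufficiently small Berkovich neighborhoods of $x$ and $y$, I would prepend (resp.~append) a short $K$-analytic disk connecting $x'$ to $x$ (resp.~$y$ to $y'$) whose hyperbolic length contribution can be made smaller than $\eta$. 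Concatenation produces a chain from $x'$ to $y'$ of total length at most $d_{X_K^{\an}}(x,y)+3\eta$, whence $d_{X_K^{\an}}(x',y')\leq d_{X_K^{\an}}(x,y)+3\eta$, giving upper semi-continuity upon letting $\eta\to 0$.

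The main obstacle will be producing such short auxiliary disks uniformly over Berkovich neighborhoods. Unlike the complex-analytic setting, where local holomorphic coordinates and the open mapping theorem immediately supply many short holomorphic disks through any given point, in the Berkovich setting one must produce sufficiently many $K$-analytic maps from small disks into prescribed Berkovich neighborhoods of an arbitrary (possibly non-classical) point $x\in X_K^{\an}$, and control their induced Kobayashi lengths. I expect this to reduce to a local existence statement for $K$-analytic maps from open disks into arbitrary Berkovich neighborhoods, which should follow from smoothness of $X_K$ and the local structure of Berkovich analytifications, together with a careful accounting of how the Kobayashi length of these auxiliary disks shrinks as their images are restricted to smaller and smaller neighborhoods of the endpoints; this is precisely the kind of local analytic information that the chain-based pseudo-metric was engineered in Section~\ref{sec:pseudo} to interact with cleanly.
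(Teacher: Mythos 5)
Your high-level structure---reduce to showing the identity map $(X_K^{\an},\tau_B)\to(X_K^{\an},\tau_d)$ is continuous, then conclude by the standard compact-to-Hausdorff homeomorphism argument---is valid, and it is actually a more economical route than the one the paper takes. The paper instead establishes that $d_{X_K^{\an}}$ is an \emph{inner} pseudo-metric (\autoref{prop:innerpseudometric}), which requires proving rectifiability of paths in basic tubes, existence of shortest paths, and introducing the ``thread of a chain,'' and then invokes Kobayashi's inner-metric criterion (\autoref{prop:innermetrictopology}), which only needs local compactness. Since $X_K^{\an}$ is genuinely compact (projectivity of $X$ plus \autoref{lemma:Berkseparable}), your approach bypasses all of that machinery, buying simplicity at the cost of not generalizing to a non-compact setting.

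However, you have misidentified where the real work lies. Continuity of $d_{X_K^{\an}}$ in the Berkovich topology is \emph{not} an additional thing to prove by a chain-surgery argument: it is already built into the setup. The definition of $K$-Lea hyperbolic (\autoref{defn:Leahyper}) only quantifies over permissible open covers and labelings for which $d_{X_K^{\an}}$ is continuous, and \autoref{lemma:pseudometricontinuous} / \autoref{lemma:pseudometricontinuous1} guarantee such labelings exist, via the labeling where the morphisms $f_i\colon\sU_i\to X_K^{\an}$ are open immersions and the metrizability of each basic tube $\sU_i$ by the Escassut metric. Your proposed method---``prepend (resp.~append) a short $K$-analytic disk''---is both unnecessary and ill-adapted to the actual definition in Section~\ref{sec:pseudo}: the chains there are built from reducible algebraic tubes together with \emph{permissible pointed morphisms of good reduction}, not arbitrary analytic disks, and the test-object metric is the Escassut metric of \autoref{coro:basictubemetrizable}, not a Poincar\'e-type metric where ``a short disk through a nearby point'' has a clean meaning. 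Replacing that speculative step with a citation of \autoref{lemma:pseudometricontinuous} closes the gap and leaves you with a correct and shorter proof.
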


\begin{remark}
We note that the assumption that $k$ is countable is necessary here as otherwise $X_K^{\an}$ need not be metrizable. We refer the reader to \cite{HruskovskiLoeserPoonen:BerkovichEmbed} and \cite{Favre:Countable} for further discussion of the metrizability of Berkovich spaces. 
\end{remark}

Next, we show that a $K$-Lea hyperbolic $K$-analytic space is $K$-analytically Brody hyperbolic, a notion which was first defined in \cite[Definition 2.3]{JVez}.  This is analogous to the statement that a Kobayashi hyperbolic space does not admit any non-constant entire curve. 

\begin{thmx}\label{xthm:LeaimpliesBrody}
If $X_K^{\an}$ is $K$-Lea hyperbolic, then $X_K^{\an}$ is $K$-analytically Brody hyperbolic i.e., for every connected algebraic group $G$ over $K$,  every morphism $G^{\an}\to X_K^{\an}$ is constant. 
\end{thmx}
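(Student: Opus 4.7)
The strategy is the natural non-Archimedean analogue of the classical argument that Kobayashi hyperbolicity implies Brody hyperbolicity. The plan rests on two properties of the $K$-Kobayashi pseudo-metric that should be established in Section \ref{sec:pseudo}: \emph{(a)} for any morphism $\varphi\colon Y^{\an}\to Z^{\an}$ of $K$-analytic spaces (in the relevant class), one has $d_{Z^{\an}}(\varphi(y_1),\varphi(y_2))\leq d_{Y^{\an}}(y_1,y_2)$; and \emph{(b)} for every connected algebraic group $G/K$, the pseudo-metric $d_{G^{\an}}$ vanishes identically. Property \emph{(a)} should be essentially tautological from the chain-of-disks definition of $d_{(-)}$; property \emph{(b)} is the main work.

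Granting \emph{(a)} and \emph{(b)}, the theorem is immediate. If a morphism $f\colon G^{\an}\to X_K^{\an}$ were non-constant, choose $g_1,g_2\in G^{\an}$ with $f(g_1)\neq f(g_2)$; then
\[ d_{X_K^{\an}}(f(g_1),f(g_2))\leq d_{G^{\an}}(g_1,g_2) = 0, \]
contradicting the assumption that $d_{X_K^{\an}}$ is a genuine metric on $X_K^{\an}$.

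To establish \emph{(b)}, the plan is to apply Chevalley's structure theorem together with translation invariance (which is automatic from \emph{(a)} applied to left-translations, since these are analytic automorphisms of $G^{\an}$) and reduce to vanishing for the building blocks $\Ga$, $\Gm$, and abelian varieties. For $\Ga^{\an}=\mA^{1,\an}_K$, any two rigid points should be connectable by the image of the standard analytic unit disk under a single large-radius dilation $z\mapsto\lambda z$, forcing the chain length in $d$ to be arbitrarily small. For $\Gm^{\an}$, one uses an analytic cover from $\Ga^{\an}$ or the multiplicative analogue of this dilation. For an abelian variety $A/K$, Raynaud uniformization realizes $A^{\an}$ as a quotient of a semi-abelian analytic group assembled from tori and formal abelian pieces for which vanishing has already been established, and \emph{(a)} then descends vanishing to $A^{\an}$.

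The main obstacle will be the abelian variety case of \emph{(b)}, where Raynaud uniformization must be carefully interfaced with the chain-of-disks pseudo-metric in the Berkovich framework; in particular, the non-Archimedean analogue of the classical Schwarz-type estimates must be robust enough to produce chains of arbitrarily small pseudo-length between prescribed points. A secondary subtlety is that such chains naturally connect rigid (or at worst type-$2$ Berkovich) points, so one must verify that vanishing on a dense subset extends to all of $G^{\an}$; this should follow from continuity properties of $d_{G^{\an}}$ against the Berkovich topology developed in Section \ref{sec:pseudo}.
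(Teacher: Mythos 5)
Your proposal runs into a structural obstacle that makes the core of the plan unworkable in the framework the paper actually builds. The $K$-Kobayashi pseudo-metric $d_{(-)}$ is \emph{only defined on constant $K$-analytic spaces}, i.e., Berkovich spaces of the form $Y_K^{\an}$ where $Y$ is a projective variety over the countable field $k$ and $Y_K$ is its base change. The test objects (reducible algebraic tubes) are cut out using the reduction map $\red_{Y_K^{\an}}\colon Y_K^{\an}\to Y$ onto the $k$-scheme $Y$, and the admissible morphisms are "permissible pointed morphisms of good reduction," which likewise hinge on the constant structure. Consequently, for a general connected algebraic group $G/K$ — and even for $\Ga^{\an}_K$, $\Gm^{\an}_K$, or a non-constant abelian variety over $K$ — the quantity $d_{G^{\an}}$ is simply not defined, so property \emph{(b)} as you state it cannot even be formulated. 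Moreover, property \emph{(a)} is \emph{not} tautological here: the chains in the definition of $d$ only use the restrictive class of permissible pointed morphisms of good reduction, and a general analytic morphism does not respect this class; the paper has to work hard (\autoref{lemma:distancedecreasing}) to get a distance-decreasing statement, and even then only for algebraic morphisms $C_K^{\an}\to X_K^{\an}$ from constant curves and after carefully tailoring permissible covers and labelings.

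The paper's actual route is different and circumvents these issues. It proves (\autoref{prop:leagroupless}) that $K$-Lea hyperbolicity of $X_K^{\an}$ forces $X$ to be \emph{groupless} over $k$, and then invokes \cite[Corollary 3.14]{JVez}, which identifies grouplessness of $X_K$ with $K$-analytic Brody hyperbolicity. To show grouplessness, one reduces (using \autoref{lemma:grouplessequivgeometric}) to ruling out a non-constant morphism $A\to X$ from an abelian variety $A/k$, then uses Poincar\'e reducibility to reduce to a non-constant morphism from a \emph{simple} abelian variety $B_i$, which is automatically non-dominant or generically one-to-one. Since $B_i$ admits a polarized dynamical system, the vanishing result \autoref{thm:vanishingdynamical} — which shows $d_{X_K^{\an}}$ vanishes on a suitable set of periodic points, with the Shilov point as the crucial hub, via equidistribution of pullbacks of a Dirac mass under the dynamical system — and \autoref{coro:mapfrompolarized} produce the contradiction with $K$-Lea hyperbolicity. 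Thus the paper never needs vanishing of a pseudo-metric on the source group, never touches $\Ga$, $\Gm$, or Raynaud uniformization, and the engine is polarized dynamics plus equidistribution rather than Schwarz-type estimates. You would need to discover this reduction through grouplessness and the dynamical vanishing mechanism to repair the argument.
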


For our purposes, the main attribute of our $K$-Kobayashi pseudo-metric is that the degeneracy of the pseudo-metric is measured by hyperbolic properties of $X$. 
In particular, the next result shows that $K$-Lea hyperbolicity can be inherited from hyperbolicity of $X$.

\begin{thmx}\label{xthm:HyperbolicImpliesLeaIntro}
If every integral subvariety of $X$ is of general type, then $X_K^{\an}$ is $K$-Lea hyperbolic. 
\end{thmx}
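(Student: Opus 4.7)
The plan is to argue by contradiction: suppose $X_K^{\an}$ is not $K$-Lea hyperbolic, so there exist distinct points $x,y \in X_K^{\an}$ with $d_{X_K^{\an}}(x,y)=0$. The strategy is to turn this degeneracy into a non-constant morphism from a positive-dimensional connected algebraic group to $X_K$, deduce that its image is a subvariety that cannot be of general type, and descend to contradict the hypothesis on $X$.

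\emph{Step 1 (Brody-type reparametrization).} Starting from a sequence of chains of the ``standard disks'' underlying the definition of $d_{X_K^{\an}}$ connecting $x$ to $y$ with total pseudo-metric length tending to $0$, I would rescale the parametrizations and, using the metrizability of $X_K^{\an}$ from Theorem~\ref{xthm:LeaimpliesBarthintro} (where the countability of $k$ is essential) to obtain sequential compactness, extract a non-constant $K$-analytic morphism $\varphi\colon G^{\an}\to X_K^{\an}$ for some positive-dimensional connected algebraic group $G/K$. This is the non-Archimedean analogue of the classical Brody reparametrization lemma, and is the technical heart of the argument.

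\emph{Step 2 (Algebraization and image).} Since $X_K$ is projective and $G$ is of finite type over $K$, non-Archimedean GAGA algebraizes $\varphi$ to a morphism of $K$-schemes $f\colon G\to X_K$, and $Y:=\overline{f(G)}\subset X_K$ is a positive-dimensional integral subvariety. Appealing to the Chevalley decomposition $1\to L\to G\to A\to 1$ with $L$ linear and $A$ abelian, I would split into two cases: if $L$ maps non-constantly, then $Y$ is covered by images of $\Ga$ or $\Gm$, hence uniruled with $\kappa(Y)=-\infty$; otherwise the abelian quotient $A$ dominates $Y$ and Ueno's theorem yields $\kappa(Y)\le 0$. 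Either way, $Y$ is not of general type.

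\emph{Step 3 (Descent from $X_K$ to $X$).} The subvariety $Y$ is defined over a finitely generated subfield $F\subset K$ containing $k$. Realizing $F$ as the function field of an integral $k$-variety $V$, spread $Y$ to a family $\mathcal{Y}\subset X\times V$ whose generic fibre recovers $Y$. For very general $v\in V(k)$, the fibre $\mathcal{Y}_v$ is an integral subvariety of $X$, and by the invariance of Kodaira dimension under extensions of algebraically closed fields in characteristic zero (applied after resolution of singularities), $\kappa(\mathcal{Y}_v)=\kappa(Y)$. Hence $\mathcal{Y}_v$ is an integral subvariety of $X$ which is not of general type, contradicting the hypothesis. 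The main obstacle is Step~1: one needs an Arzela--Ascoli-type compactness statement for sequences of Berkovich analytic morphisms into the metrizable space $X_K^{\an}$, together with a rescaling mechanism on the domain that prevents the limiting morphism from collapsing to a constant. This is precisely where the countability of $k$, the careful construction of the $K$-Kobayashi pseudo-metric, and the metric topology from Theorem~\ref{xthm:LeaimpliesBarthintro} enter in an essential way.
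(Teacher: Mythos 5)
Your proposal has a genuine gap at the step you yourself flag as the technical heart: there is no known non-Archimedean analogue of the Brody reparametrization lemma that would let you pass from the degeneracy $d_{X_K^{\an}}(x,y)=0$ to a non-constant analytic morphism $\varphi\colon G^{\an}\to X_K^{\an}$ from a positive-dimensional connected algebraic group. The classical Brody argument relies delicately on the Euclidean rescaling of holomorphic maps $\mathbb{D}\to X$ and the conformal structure of the unit disk; in the Berkovich setting the test objects of $d_{X_K^{\an}}$ are the reducible algebraic tubes of \autoref{defn:reduciblealgebraictube}, the metric on them is the Escassut metric, and there is no rescaling mechanism on these domains. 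In fact what you are implicitly asserting is the converse of \autoref{xthm:LeaimpliesBrody} (that failure of $K$-Lea hyperbolicity forces a non-constant map from an algebraic group), and nothing of this sort is established — \autoref{xthm:LeaimpliesBrody} only gives the forward implication. Your proposal also does not engage with the actual combinatorial structure of $d_{X_K^{\an}}$: in \autoref{defn:Leahyper} $K$-Lea hyperbolicity is quantified over every choice of permissible open cover and labeling on every constant integral subvariety, and the pseudo-metric is built from chains of pointed, good-reduction, \emph{permissible} morphisms through subvarieties $Z_{i,K}^{\an}$, not from arbitrary analytic disks. Any extraction argument has to be carried out at the level of these chains.

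The paper's proof takes a fundamentally different route that bypasses the need for a Brody lemma. It proves a tautness statement: for every permissible open cover and labeling, and every reducible algebraic tube $(\sU,z)$ in the data, the family $\Mor_{\GR}((\sU,z),(X_K^{\an},x))$ of permissible pointed morphisms of good reduction is a \emph{normal family} in the sense of \autoref{defn:normalfamilies} (\autoref{thm:hyperbolicimpliesnormal}). This uses Rodr\'iguez V\'azquez's non-Archimedean Montel theorem (\autoref{thm:vazquezMontel}), the anti-continuity of the reduction map to localize the image of each $f_n$ into an affinoid domain, and — this is where hyperbolicity enters — the Kobayashi--Ochiai finiteness theorem (\autoref{thm:KobayashiOchiai}) to control the finitely many possible reductions $\wt{f_n}\colon\overline{\red(\sU)}\to X$, plus an argument that each $f_n$ is finite at the Shilov boundary. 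Separately, a diagonal-subsequence argument (\autoref{prop:normalimpliesLea} and \autoref{xprop:normalimpliesLea}) shows that tautness forces the chain lengths in \autoref{defn:pseudometric} to separate distinct points, i.e.\ $d_{X_K^{\an}}$ is a metric. In short: where you would need an algebraic group limit, the paper instead extracts convergent subsequences of the chain morphisms directly and shows the endpoints collapse, never producing (and never needing) a limiting map from $G^{\an}$. Your Steps 2 and 3 (Chevalley decomposition, Ueno's theorem, and spreading out over a finitely generated field) are fine ideas in isolation, but they hang on Step 1, which is precisely the input that does not exist.
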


\begin{remark}\label{rem:WhyBerko}
Non-Archimedean analytic spaces play an important role in our work. 
While there are several notions of a non-Archimedean analytification \`a la Tate \cite{TateRigid}, Berkovich \cite{BerkovichEtaleCohomology}, or Huber \cite{huber2}, we use Berkovich spaces due to their nice topological properties.

In particular, we will need to consider continuous paths and metrics on these spaces, and we note that adic spaces do not admit non-constant continuous paths and are not Hausdorff. 
In short, the only category of non-Archimedean analytic spaces which posses the correct topological properties for us are Berkovich's analytic spaces. 
\end{remark}

\begin{remark}
In Section \ref{sec:pseudo}, we define our non-Archimedean Kobayashi pseudo-metric on constant $K$-analytic spaces i.e., the Berkovich analytic spaces $X_K^{\an}$ from the above theorem statements. 
While we only offer a definition for these $K$-analytic spaces, it will be clear to the reader how one could extend the definition to a connected, proper, reduced $K$-analytic space where $K$ is any algebraically closed, non-Archimedean valued field of characteristic zero which contains a countable dense subfield. 
We do not explore this construction in our current work, but we believe it to be of general interest. 
\end{remark}

\subsection*{Ideas of proof}
The core idea of the proof of \autoref{xthm:main1} is simple, and we describe it below. 

First, we recall the notation. 
Let $L$ be an algebraically closed field of characteristic zero,  let $X/L$ denote a  projective variety. 
From results in \cite{JKam}, the statement of \autoref{xthm:main1} is equivalent to showing that if every integral subvariety of $X$ is of general type, then for every smooth projective curve $C/L$, the Hom-scheme $\underline{\Hom}_L(C,X)$ is of finite type i.e., $X$ is bounded over $L$ (\autoref{defn:bounded}). 
Let $K$ denote the completion of an algebraic closure of the formal Laurent series with coefficients in $L$ with respect to the $t$-adic valuation, and let $X_K$ denote the base change of $X$ to $K$. 
By the geometric properties of boundedness (\autoref{thm:boundedgeometric}), we have that $X$ is bounded over $L$ if and only if $X_K$ is bounded over $K$. 
The advantage of working over $K$ is that we can consider the Berkovich analytification $X_K^{\an}$ of $X_K$. 
Next, we define a pseudo-metric on $X_K^{\an}$ such that when it is a metric, it defines the Berkovich $K$-analytic topology. 
We have that it is a metric when every integral subvariety of $X$ is of general type (cf.~ \autoref{xthm:LeaimpliesBarthintro} and \autoref{xthm:HyperbolicImpliesLeaIntro}). 
These results, the Arzela--Ascoli theorem, and additional arguments understanding uniform limits of $K$-analytic morphisms will help us to show that $X_K$ is bounded over $K$, and hence that $X$ is bounded over $L$. 
We note that Kobayashi \cite[Theorem 5.1.1]{Kobayashi} proved a variant of this result of Kobayashi hyperbolic varieties, and our above results allow us to imitate parts of his proof. 

In more detail, the proof of \autoref{xthm:main1} will follow in three steps. 
Recall that $k$ is an algebraically closed, \textit{countable} field of characteristic zero,  $K$ is the completion of an algebraic closure of the formal Laurent series with coefficients in $k$ with respect to the $t$-adic valuation,  $X/k$ is a projective variety, $X_K$ is the base change of $X$ to $K$, and $X_K^{\an}$ is the Berkovich analytification of $X_K$.  

\subsection*{Step 1:~Construction of a non-Archimedean Kobayashi pseudo-metric}
As mentioned above, the first input is the construction of a pseudo-metric $d_{X_K^{\an}}$ on $X_K^{\an}$. 
To do so, we identify a class of test objects, namely connected $K$-affinoid spaces, and construct a pseudo-metric using chains of these connected $K$-affinoids.  The morphisms in these chains are pointed at non-rigid points of $X_K^{\an}$ and satisfy certain algebraicity conditions. 
The construction of this pseudo-metric is not sufficient, and so we need to define another pseudo-metric which incorporates chains of closed constant $K$-subvarieties of $X_K^{\an}$.  
(For more motivation and details, see Section \ref{sec:pseudo}.)

\subsection*{Step 2:~Relating degeneracy of the pseudo-metric to hyperbolicity}
Once we have our pseudo-metric, we need to understand its behavior. 
The first result in this direction is \autoref{xthm:LeaimpliesBarthintro}, which shows that when the pseudo-metric is a metric, it defines the Berkovich $K$-analytic topology. 
Next, we prove that the pseudo-metric is not a metric on any projective variety which admits a polarized dynamical system (\autoref{thm:vanishingdynamical}), and this result allows us to deduce \autoref{xthm:LeaimpliesBrody}. 
On the other hand, using a deep result of Kobayashi--Ochiai \cite{KobayashiOchiai:MeromorphicMappings} (\autoref{thm:KobayashiOchiai}), we show in \autoref{xthm:HyperbolicImpliesLeaIntro} that the pseudo-metric is a metric when every integral subvariety of $X$ is of general type.

\subsection*{Step 3:~Studying limits of $K$-analytic morphisms}
The final step involves studying limits of $K$-analytic morphisms. 
More precisely, we prove that when $X$ is hyperbolic, our pseudo-metric will be distance decreasing with respect to $k$-analytic morphisms from analytifications of smooth projective constant curves $C_K^{\an}$ to $X_K^{\an}$ (note that here we mean the analytification of the base change of a $k$-morphism $C \to X$). 
With this result,  we will use the Arzela--Ascoli theorem (\autoref{thm:ArzelaAscoli}) to deduce that $\Hom_k^{\an}(C_K^{\an},X_K^{\an})$ is relatively compact in $\mathcal{C}(C_K^{\an},X_K^{\an})$. 
In \cite[Theorem 5.1.1]{Kobayashi},  Kobayashi noted that the space of holomorphic maps between complex manifolds is closed in the space of continuous map, and so the Arzela--Ascoli theorem implies his desired result.

In the non-Archimedean setting, it is not true that the space of $K$-analytic morphisms is closed in $\mathcal{C}(C_K^{\an},X_K^{\an})$ (see Subsection \ref{subsec:weaklyanalytic} for details), and so further arguments are required. 
In Section \ref{sec:proofmain},  we show that for every sequence $(f_n)$ of $k$-analytic maps,  there exists sub-sequence $(f_n')$ of $(f_n)$,  a complete field extension $K'/K$, and a $K'$-analytic morphism $F\colon C_{K'}^{\an} \to X_{K'}^{\an}$ such that $(f_n')$ converges to $\pi_{K'/K}(F)$ in $\underline{\Hom}_K(C_K,X_K)^{\an}$ where $\pi_{K'/K}$ is the base change morphism $\underline{\Hom}_K(C_K,X_K)^{\an}_{K'} \to \underline{\Hom}_K(C_K,X_K)^{\an}$. 
This result and deep topological properties of Berkovich $K$-analytic spaces proved by Poineau \cite{Poineau:AngelicBerkovich} allow us to deduce that $\Hom_k^{\an}(C_K^{\an},X_K^{\an})$ is relatively compact in $\underline{\Hom}_K^{\an}(C_K^{\an},X_K^{\an})$ (cf.~\autoref{prop:relativelycompact}). 
The final step is to use base change morphisms and the theory of Berkovich spaces over trivially valued fields to conclude that $\underline{\Hom}_k(C,X)$ is quasi-compact\footnote{We will reserve the term compact to mean quasi-compact and Hausdorff.}.

\begin{remark}[Choices involved in the pseudo-metric]
\label{rem:choiceinvolved}
The construction of our pseudo-metric involves various choices (see e.g., Sections \ref{sec:metrizability} and \ref{sec:pseudo}). 
From a purely aesthetic perspective, these choices are unsatisfactory. 
That being said, we believe that in order to construct a pseudo-metric on Berkovich $K$-analytic spaces, one must make some choices.

The main reason for this is that Berkovich $K$-analytic spaces are not locally modeled by a \textit{single} $K$-analytic object; one can see this immediately in the setting of $K$-analytic curves. This means that if one was to define a pseudo-metric using a single test object it could be not defined on \textit{every} point of \textit{every} Berkovich $K$-analytic space.  We remind the reader that a Berkovich $K$-analytic space contains many more points apart from the $K$-points. 

The choices in the construction allow us to define a pseudo-metric on every point of every constant $K$-analytic variety (i.e., a $K$-analytic space of the form $X_K^{\an}$ as above). 
We note that different choices will lead to different pseudo-metrics. 
However, our vanishing (\autoref{thm:vanishingdynamical}) and non-vanishing (\autoref{xthm:HyperbolicImpliesLeaIntro}) results are independent of these choices, in that they hold for any of the choices made in the construction. 
\end{remark}

\subsection*{Related Works}
Our work falls into the realm of non-Archimedean hyperbolicity, and although this area is only a few decades old, it has greatly matured over the past few years.

The study of non-Archimedean hyperbolicity was initiated in the thesis of W.~Cherry \cite{Cherry, CherryKoba} where he proved, amongst other things, a non-Archimedean analogue of the Bloch and Ochiai theorem. He also defined a variant of the Kobayashi pseudo-metric for Berkovich analytic spaces, and stated several conjectures, which have guided this area. 
We discuss the comparison of our non-Archimedean Kobayashi pseudo-metric and Cherry's pseudo-metric in \autoref{rem:compareCherry}.

Recently, Javanpeykar--Vezzani \cite{JVez} revisited Cherry's work and offered a definition of $K$-analytically Brody hyperbolic. In this work, the authors prove several properties of $K$-analytically Brody hyperbolic $K$-analytic spaces and discuss how hyperbolicity of the generic fiber can be inherited from the special fiber and vice versa (\textit{loc.~cit.~}Theorem 1.3). 
Work of the author and Rosso \cite{MorrowNonArchGGLV, MorrowRosso:Special} defined the notion of a pseudo-$K$-analytically Brody hyperbolic and proved a non-Archimedean analogue of a result of Noguchi \cite{Nogu}, which extended work of Bloch \cite{Bloch26} and Ochiai \cite{Ochiai77} to the semi-abelian setting; we note that the study of analytic morphisms from $\mG_{m,K}^{\an}$ into a semi-abelian variety was already discussed in \cite{AnCherryWang}.   

In \cite{Vazquez_HyperbolicityNotions, Vazquez_NormalFamily}, Rodr\'iguez V\'azquez proved a non-Archimedean version of Montel's theorem and showed that hyperbolicity of a curve (i.e., genus of the curve is $\geq 2$) is equivalent to the normalcy of the family of analytic morphisms from a basic tube (\autoref{defn:basictube}) of dimension 1 into the curve when the residue field is a countable, algebraically closed field of characteristic zero. 
In the non-Arichmedean setting, normalcy of a family of analytic morphisms is different from normalcy of a family of holomorphic morphisms in the complex analytic setting (see \cite[p.~1679]{Vazquez_NormalFamily} for definition). 
It would be interesting to investigate the relationship between normality and hyperbolicity further.

\subsection*{Organization}
In Sections \ref{sec:hyperbolicity}, \ref{sec:pseudomericback}, and \ref{sec:backgroundBerkovich}, we recall background on various notions of hyperbolicity, pseudo-metric induced topologies, length spaces, and Berkovich spaces, respectively. 
We note that Section \ref{sec:backgroundBerkovich} contains material on weakly analytic morphisms and non-Archimedean Hilbert schemes. 
In Section \ref{sec:metrizability}, we recall work concerning the metrizability of $K$-affinoid spaces. 

The new constructions of our work appear in the later sections. 
First,  in Section \ref{sec:pseudo}, we define our non-Archimedean Kobayashi pseudo-metric on a constant $K$-analytic variety $X_K^{\an}$ and define the notion of a $K$-Lea hyperbolic $K$-analytic space.

In Section \ref{sec:nonArchBarth}, we prove \autoref{xthm:LeaimpliesBarthintro}, which is a non-Archimedean analogue of a result of Barth, and next in Section \ref{sec:vanishing},  we describe the vanishing of our non-Archimedean Kobayashi pseudo-metric on a variety which admitting a polarized dynamical system and prove \autoref{xthm:LeaimpliesBrody}. This result may be viewed as the non-Archimedean analogue of several vanishing results concerning the Kobayashi pseudo-metric. 

The proof of our main theorems appear in the last two sections. 
In Section \ref{sec:proofLeaHyper}, we prove \autoref{xthm:HyperbolicImpliesLeaIntro} by showing that when every integral subvariety of $X$ is of general type,  $X_K^{\an}$ is $K$-Lea hyperbolic, and we conclude in Section \ref{sec:proofmain} with a proof of \autoref{xthm:main1} and the various corollaries by the methods outlined above. 

\subsection*{Acknowledgements}
This work owes much to many people.  First and foremost, I thank my wife, Lea Beneish. 
She has been a constant source of encouragement and support (and hyperbolicity!), and without her, this work would not exist. It is a privilege to dedicate this work to her.

I am also grateful to Matt Baker, Robert Benedetto, Morgan Brown, William Cherry, Brian Conrad, Henri Darmon, Alain Escassut, Ariyan Javanpeykar, Johannes Nicaise, Martin Olsson,  J\'er\^{o}me Poineau, Joseph Rabinoff, Ravi Vakil, Alberto Vezzani,  and David Zureick-Brown for engaging discussions on aspects of this result ranging from $p$-adic dynamical systems to Berkovich spaces to Hom-schemes. 
I am especially thankful to Ariyan Javanpeykar for impressing upon me the importance of the construction of a non-Archimedean Kobayashi pseudo-metric and describing how this construction would have implications to the conjectures of Demailly and Lang. 
I would also like to thank Lea Beneish, Sebastien Boucksom, Sean Howe, Ariyan Javanpeykar, Mattias Jonsson, J\'er\^{o}me Poineau, and Eric Riedl for helpful comments on a first draft. 

During the preparation of this article,  the author was partially supported by NSF RTG grant DMS-1646385 and later supported by NSF MSPRF grant DMS-2202960.

\section{\bf Conventions}
\label{sec:conventions}
We establish the following conventions throughout.

\subsection{Fields}\label{subsec:fields}
We let $k$ be a \textit{countable}, algebraically closed field of characteristic zero. 
We will also let $L$ denote any algebraically closed field of characteristic zero.  
Finally, we let $K$ denote the completion of an algebraic closure of the formal Laurent series with coefficients in $k$ with respect to the $t$-adic valuation. 
The field $K$ is an algebraically closed, complete non-Archimedean, non-trivially valued field of characteristic zero which is separable i.e., it contains a countable dense subset, namely an algebraic closure $\overline{k(t)}$ of $k(t)$.

\subsection{Varieties}\label{subsec:var}
By a variety over $L$, we mean a separated, integral, positive dimensional scheme of finite type over $\Spec(L)$. 
A curve (resp.~surface) over $L$ is a $1$-dimensional (resp.~$2$-dimensional) variety.  
For an extension $L'/L$ and variety $X/L$,  we will let $X_{L'}$ denote the base change of $X$ to $L'$. 
When $L = k$ and $L' = K$ as above, we will say that $X_K$ is the \cdef{constant variety} associated to $X$. 
For a subset $U$ of a variety $X/L$, we let $\overline{U}$ denote the Zariski closure of $U$ in $X$.

By a subvariety of a variety over $L$,  we mean a positive dimensional closed reduced subscheme.  Note that a subvariety is determined by its underlying topological space and we frequently identify both.
By a point of a variety over $L$, we mean a $0$-dimensional closed integral subscheme. 
For a  variety $X/k$ and a line bundle $\cL$ on $X$, we use $\kappa(X,\cL)$ to denote the Iitaka dimension of $\cL$, which we recall. 
For each $m\geq 0$ such that $h^0(X,\cL^{\otimes m}) \neq 0$, the linear system $|\cL^{\otimes m}|$ induces a rational map from $X$ to a projective space of dimension $h^0(X,\cL^{\otimes m}) - 1$. 
The Iitaka dimension of $\cL$ is the maximum over all $m\geq 1$ of the dimension of the image for this rational map. 
The Kodaira dimension $\kappa(X)$ of $X$ is the Iitaka dimension of the canonical bundle $K_X$.

\subsection{Hom-schemes}\label{subsec:homschemes}
We will make extensive use of Hom-schemes in this paper, and so we recall some of their basic properties. Let $S$ be a scheme, and let $X\to S$ and $Y\to S$ be projective flat morphisms of schemes. 
By Grothendieck's theory of Hibert and Quot schemes \cite{GrothendieckHilbertSchemes}, the functor
\[
\text{Sch}/S^{\text{op}} \to \text{Sets}, \quad (T\to S) \mapsto \Hom_T(Y_T,X_T)
\]
is representable by a locally of finite type, separated $S$-scheme, which we denote by $\underline{\Hom}_S(X,Y)$.  
Moreover, for $h\in \mQ[t]$, the subfunctor parametrizing morphisms whose graph has Hilbert polynomial $h$ is representable by a quasi-projective subscheme $\underline{\Hom}_S^{h}(X,Y)$ of $\underline{\Hom}_S(X,Y)$. We can write $\underline{\Hom}_S(X,Y) = \bigsqcup_{h\in \mQ[t]}\underline{\Hom}_S^{h}(X,Y)$ i.e., as a disjoint union of quasi-projective $S$-schemes.  

\subsection{Non-Archimedean analytic spaces}\label{subsec:nonArch}
We will work with non-Archimedean analytic spaces in the sense of Berkovich \cite{BerkovichSpectral}. All of our Berkovich analytic spaces will be good i.e., admit an affinoid neighborhood. Throughout, we refer to a Berkovich space over $K$ as a \cdef{$K$-analytic space}. 
We will use script letters $\sX$, $\sY$, $\sZ$ to denote $K$-analytic spaces, which are not necessarily algebraic, and we will denote the $n$-dimensional open unit polydisk over $K$ (resp.~the $n$-dimensional closed unit polydisk over $K$) by $B^n(0,1)^-$ (resp.~$B^n(0,1)$).  For $x\in X_K^{\an}$, let $\sH(x)$ denote the completed residue field of $x$. 

In this work,  we will be primarily interested in the following kind of $K$-analytic space.

\begin{definition}\label{defn:constantKanalyticvariety}
Let $X$ be a proper variety over an algebraically closed field $k$, let $K$ be as above, and let $X_K$ denote the base change of $X$ to $K$. 
The $K$-analytic space $X_K^{\an}$ associated to $X_K$ will be referred to as the \cdef{constant $K$-analytic variety associated to $X$}. In most situations, we will simply say $X_K^{\an}$ is a \cdef{constant $K$-analytic variety}.  
The Berkovich space $X_K^{\an}$ is reduced, Hausdorff, boundaryless, path-connected, and compact by \cite[Theorem 3.4.8]{BerkovichSpectral}.  We recall these notions in Section \ref{sec:backgroundBerkovich}.  
Given a constant $K$-analytic variety $X_K^{\an}$, we define a \cdef{constant $K$-analytic subvariety} of $X_K^{\an}$ to be a $K$-analytic subspace of the form $Z_K^{\an}$ where $Z$ is a subvariety of $X$. 
\end{definition}

\section{\bf Notions of hyperbolicity}
\label{sec:hyperbolicity}
In this section, we define various notions of hyperbolicity which appear in the work, describe their basic properties, and recall results relating these notions.  To conclude, we state the (weak forms) of the conjecture of Demailly, Green--Griffiths, Javanpeykar--Kamenova, and Lang. 

\subsection{Algebraically hyperbolic and bounded varieties}
We begin with Demailly's \cite{Demailly} notion of algebraically hyperbolic. 
Recall that $L$ is an algebraically closed field of characteristic zero. 

\begin{definition}\label{defn:algebraicallyhyperbolic}
Let $X/L$ be a projective variety. 
We say that $X$ is \cdef{algebraically hyperbolic over $L$} if for every ample line bundle $\mathcal{L}$ on $X$, there is a real number $\alpha({X,\mathcal{L}})$ depending only on $X$ and $\mathcal{L}$ such that for every smooth projective curve $C/L$ and every $L$-morphism $f\colon C\to X$, the inequality $\deg_Cf^*\mathcal{L} \leq \alpha({X,\mathcal{L}})g(C)$ holds where $g(C)$ is the genus of $C$. 
\end{definition}

\begin{example}\label{exam:alghyp}
Using the Riemann--Hurwitz formula, it is clear that a projective curve $C$ is algebraically hyperbolic if and only if the genus $g $ of $C$ satisfies $g\geq 2$. 
In higher dimensions,  we know less about algebraically hyperbolic varieties, however we do have many partial results for hypersurfaces, which we briefly summarize.

It is well-known that hypersurfaces of degree less than $2n -2$ in $\mP^n$ will always contain lines and hence are not algebraically hyperbolic. 
Voisin \cite{Voisin:ConjectureClemens2} proved that a very general hypersurface in $\mP^n$ of degree $\geq 2n-1$ for $n\geq 4$ is algebraically hyperbolic. This result leaves one with determining the algebraically hyperbolic hypersurfaces in $\mP^3$ and the hypersurfaces of degree $2n-2$ which are algebraically hyperbolic.  
For $n = 3$, Xu \cite{Xu:SubvarietiesGeneralHypersurfaces} showed that very general hypersurfaces of degree $\geq 6$ in $\mP^3$ are algebraically hyperbolic, and recent work of Coskun--Riedl \cite{CoskunRiedl:AlgHyperGeneralQuintic} proved that a very general quintic surface in $\mP^3$ is algebraically hyperbolic. To summarize, we have a complete classification of very general hypersurfaces in $\mP^3$ which are algebraically hyperbolic.  
For hypersurfaces of degree $2n-2$, we note that for $n = 3$ these are K3 surfaces which contain rational curves and hence cannot be algebraically hyperbolic. 
For $n\geq 6$, Pacienza \cite{Pacienz:SubvarietiesGeneralType} proved that a very general hypersurface of degree $2n-2$ is algebraically hyperbolic for $n\geq 6$. 
In recent work \cite{Yeong:AlgHyp5}, Yeong proved that a very general hypersurface of degree $2n-2$ in $\mP^5$ is algebraically hyperbolic leaving the case of $n = 4$ the only open case. 
\end{example}

In \cite{JKam}, Javanpeykar and Kamenova defined a weaker notion of algebraically hyperbolic, which is related to topological properties of Hom-schemes. 

\begin{definition}\label{defn:bounded}
Let $X/L$ be a projective variety and let $n\geq 1$ be an integer. 
We say that $X$ is \cdef{$n$-bounded over $L$} if for every normal projective variety $Y/L$ of dimension at most $n$, the scheme $\underline{\Hom}_L(Y,X)$ is of finite type over $L$, in particular $\underline{\Hom}_L(Y,X)$ is quasi-compact. 
We say that $X$ is \cdef{bounded over $L$} if $X$ is $n$-bounded over $L$ for every $n\geq 1$. 
\end{definition}

The relationship between algebraically hyperbolic, $1$-bounded, and bounded is summarized in the following theorems of Javanpeykar and Kamenova. 

\begin{theorem}[\protect{\cite[Theorem 9.3 and Theorem 1.14]{JKam}}]\label{thm:equivalentbounded}
Let $X/L$ be a projective variety. Then, the following are equivalent:
\begin{enumerate}
\item $X$ is $1$-bounded over $L$,
\item $X$ is bounded over $L$,
\item For every ample line bundle $\mathcal{L}$ and every integer $g\geq 0$, there is an integer $\alpha(X,\mathcal{L},g)$ such that, for every smooth projective curve $C$ of genus $g$ over $L$ and every $L$-morphism $f\colon C\to X$, the inequality $\deg_Cf^{*}\mathcal{L} \leq \alpha(X,\mathcal{L},g)$ holds. 
\end{enumerate}
\end{theorem}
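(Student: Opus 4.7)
The plan is to prove the chain $(1) \Rightarrow (2) \Rightarrow (3) \Rightarrow (1)$, which together with the tautological $(2) \Rightarrow (1)$ (curves are normal projective varieties) gives the full equivalence. The implication $(3) \Rightarrow (1)$ is immediate: for a fixed smooth projective curve $C$ of genus $g$ and a fixed ample line bundle $\mathcal{M}$ on $C$, the Hilbert polynomial of the graph $\Gamma_f \subset C \times X$ with respect to $\mathcal{M} \boxtimes \mathcal{L}$ is an affine function of $g$, $\deg_C \mathcal{M}$, and $\deg_C f^{*}\mathcal{L}$; condition $(3)$ bounds the last, so only finitely many Hilbert polynomials appear, and the decomposition $\underline{\Hom}_L(C,X) = \bigsqcup_h \underline{\Hom}_L^h(C,X)$ into quasi-projective pieces has only finitely many non-empty summands.

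For $(1) \Rightarrow (2)$, fix a normal projective $L$-variety $Y$ of dimension $n$ with ample line bundle $\mathcal{A}$. For $m \gg 0$, a general complete intersection $C = H_1 \cap \cdots \cap H_{n-1}$ of divisors in $|\mathcal{A}^{\otimes m}|$ is a smooth curve by Bertini, fixed once the sections are chosen. For any $f\colon Y \to X$,
\[
m^{n-1}\cdot (f^{*}\mathcal{L} \cdot \mathcal{A}^{n-1}) \;=\; \deg_C (f|_C)^{*}\mathcal{L} \;\leq\; D,
\]
where $D := \max\{d : \underline{\Hom}_L^d(C,X) \neq \emptyset\}$ is finite by $(1)$. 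Hence $c_1 := f^{*}\mathcal{L} \cdot \mathcal{A}^{n-1}$ is uniformly bounded. Since $f^{*}\mathcal{L}$ is nef and $\mathcal{A}$ ample, the log-concavity $c_i^2 \geq c_{i-1} c_{i+1}$ of the Khovanskii--Teissier inequalities, with $c_i := (f^{*}\mathcal{L})^i \cdot \mathcal{A}^{n-i}$, propagates the bound inductively to every $c_i$ (the edge $c_0 = \mathcal{A}^n$ being independent of $f$). These intersection numbers determine the Hilbert polynomial of $\Gamma_f \subset Y \times X$ with respect to $\mathcal{A} \boxtimes \mathcal{L}$, so only finitely many appear, giving finite type for $\underline{\Hom}_L(Y,X)$.

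The crux is $(2) \Rightarrow (3)$. Fix $g \geq 0$ and construct a finite-type $L$-scheme $S$ with a family $\pi\colon \mathcal{C} \to S$ of smooth projective genus $g$ curves such that every such curve appears as a geometric fiber (for $g \geq 2$, take the smooth locus in the Hilbert scheme of tricanonically embedded curves; easier constructions suffice for $g \leq 1$). Fix a relative polarization $\mathcal{M}$ on $\mathcal{C}/S$ of fiberwise-constant degree and decompose
\[
\underline{\Hom}_S(\mathcal{C}, X_S) \;=\; \bigsqcup_{h \in \QQ[t]} H^h,
\]
with each $H^h \to S$ quasi-projective and image $T_h \subset S$ constructible. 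I prove by Noetherian induction on $\dim S$ that $\{h : T_h \neq \emptyset\}$ is finite. Reducing to $S$ irreducible with generic point $\eta$, the set $\{h : \eta \in T_h\}$ coincides with the set of Hilbert polynomials appearing in $\underline{\Hom}_{\overline{\kappa(\eta)}}(\mathcal{C}_\eta \otimes \overline{\kappa(\eta)}, X_{\overline{\kappa(\eta)}})$. By the geometric persistence of boundedness under base change of algebraically closed fields, hypothesis $(2)$ ascends to $\overline{\kappa(\eta)}$, so this Hom-scheme is of finite type, forcing only finitely many such $h$. The remaining $T_h$'s lie in a proper closed subscheme $S' \subsetneq S$, and the inductive hypothesis applies to the restricted family $\mathcal{C}|_{S'} \to S'$. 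The resulting global finiteness of Hilbert polynomials yields the uniform bound $\alpha(X, \mathcal{L}, g)$ on $\deg_C f^{*}\mathcal{L}$.

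The main obstacle will be the inductive step in $(2) \Rightarrow (3)$: its legitimacy hinges on the geometric persistence of boundedness under base change to algebraic closures of function fields, since hypothesis $(2)$ is stated over $L$ and does not directly apply to the fiber over the non-closed point $\eta$. Once that persistence is in hand, the Noetherian descent is mechanical, and the Bertini--Khovanskii--Teissier step in $(1) \Rightarrow (2)$ is classical, with the only subtlety being that ampleness of $\mathcal{L}$ on $X$ is what ensures $f^{*}\mathcal{L}$ is nef so that the log-concavity inequalities genuinely apply.
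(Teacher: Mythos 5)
The paper does not prove this statement; it is imported verbatim from Javanpeykar--Kamenova \cite{JKam} (their Theorems 1.14 and 9.3), so there is no internal proof to compare against. Your chain $(1)\Rightarrow(2)\Rightarrow(3)\Rightarrow(1)$ is the right architecture, and the directions $(3)\Rightarrow(1)$ and $(1)\Rightarrow(2)$ are essentially sound, modulo two small points in $(1)\Rightarrow(2)$: the intersection numbers $c_i = (f^{*}\mathcal{L})^i\cdot\mathcal{A}^{n-i}$ do not by themselves determine the Hilbert polynomial of $\Gamma_f$ (the Riemann--Roch expansion also involves Todd-class intersections), so you should instead observe that the $c_i$'s bound the degree $(\mathcal{A}\otimes f^{*}\mathcal{L})^n = \sum_i\binom{n}{i}c_i$ of the integral subvariety $\Gamma_f\subset Y\times X$ of fixed dimension $n$, and then invoke the classical fact that integral subvarieties of a fixed projective ambient of bounded degree and dimension have only finitely many Hilbert polynomials; and the Khovanskii--Teissier propagation $c_{i+1}\leq c_i^2/c_{i-1}$ needs a sentence on the degenerate case where some $c_j$ vanishes (log-concavity with $c_0>0$ forces $c_i=0$ for all $i\geq j$, so the bound survives).

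The genuine gap is in $(2)\Rightarrow(3)$, and it is not the one you flagged. The geometric persistence of boundedness (which is \autoref{thm:boundedgeometric} and also comes from \cite{JKam}) is indeed needed, but even granting it, your Noetherian induction does not close. You establish that $\{h : \eta\in T_h\}$ is finite, and then assert that ``the remaining $T_h$'s lie in a proper closed subscheme $S'\subsetneq S$.'' This is unjustified: each individual $T_h$ with $\eta\notin T_h$ is a constructible set whose closure $Z_h$ is a proper closed subset of $S$, but there is no reason these $Z_h$ should be contained in a \emph{single} proper closed subset. For instance nothing in the hypotheses rules out the scenario $T_{h_n}=\{s_n\}$ for a sequence of pairwise distinct closed points $s_n\in S$; then $\bigcup_n Z_{h_n}$ is Zariski-dense while infinitely many $T_h$'s are non-empty. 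What you actually prove --- that for \emph{every} point $s\in S$ the set $\{h : s\in T_h\}$ is finite (by applying $(2)$ over $\overline{\kappa(s)}$) --- is strictly weaker than the finiteness of $\{h : T_h\neq\emptyset\}$. Closing this gap requires a further mechanism (and this is where the real content of \cite[Theorem 9.3]{JKam} lives): one has to rule out the ``escaping degrees'' scenario above, which neither constructibility of the $T_h$ nor fiberwise finiteness accomplishes on its own.
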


By \autoref{thm:equivalentbounded}, it is clear that being algebraically hyperbolic implies boundedness, and it is conjectured (cf.~\autoref{conj:DGGJKL}) that these notions are equivalent. 
The property of being algebraically hyperbolic and bounded are geometric i.e., they persist over any algebraically closed extension. 

\begin{theorem}[\protect{\cite[Theorem 7.1 \& Theorem 1.11]{JKam}}]\label{thm:boundedgeometric}
Let $X/L$ be a projective variety, and let $L'$ be an algebraically closed extension containing $L$. Then, $X$ is algebraically hyperbolic (resp.~bounded) over $L$ if and only if $X_{L'}$ is algebraically hyperbolic (resp.~bounded) over $L'$. 
\end{theorem}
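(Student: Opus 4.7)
The plan is to deduce both invariance statements (algebraically hyperbolic and bounded) from a single spreading-out argument. By \autoref{thm:equivalentbounded}, $X$ is bounded over $L$ if and only if the uniform degree bounds of part (3) of that theorem hold, and likewise for $X_{L'}$ over $L'$. Thus, in both the algebraically hyperbolic and the bounded cases, it suffices to show that the existence of the relevant degree bound on $\deg_C f^{*}\cL$ over all smooth projective curves of a fixed genus is invariant under extension of algebraically closed base fields, the only difference being whether the bound is required to be linear in $g(C)$.

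The implication $X_{L'} \Rightarrow X$ is immediate from base change. Given a smooth projective curve $C/L$ of genus $g$, an $L$-morphism $f\colon C\to X$, and an ample line bundle $\cL$ on $X$, the base change $(C_{L'},f_{L'},\cL_{L'})$ is a smooth projective integral curve over $L'$ of genus $g$ equipped with an $L'$-morphism to $X_{L'}$ and with an ample line bundle on $X_{L'}$; moreover $\deg_{C_{L'}}f_{L'}^{*}\cL_{L'} = \deg_C f^{*}\cL$. Hence any uniform degree bound that holds for $X_{L'}$ descends to one for $X$.

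The reverse implication requires a spreading-out/specialization argument. Fix an ample line bundle $\cL'$ on $X_{L'}$, an integer $g\geq 0$, a smooth projective curve $C'/L'$ of genus $g$, and an $L'$-morphism $f'\colon C'\to X_{L'}$. Choose an ample line bundle $\cL$ on $X$, which exists since $X/L$ is projective. By openness of the ample cone applied to the classes of $\cL_{L'}$ and $\cL'$, there is an integer $m\geq 1$, depending only on $\cL$ and $\cL'$, such that $m\cL_{L'} - \cL'$ is nef, whence
\[
\deg_{C'} f'^{*}\cL' \;\leq\; m\cdot \deg_{C'} f'^{*}\cL_{L'}.
\]
Next, write $L' = \varinjlim R$ as a filtered colimit of finitely generated $L$-subalgebras. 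By standard limit arguments (and after inverting finitely many elements of such an $R$), the pair $(C',f')$ descends to a smooth projective family of curves $\mathcal{C}\to \Spec R$ equipped with an $R$-morphism $F\colon \mathcal{C}\to X_R$ whose base change to $L'$ is $(C',f')$. Since $L$ is algebraically closed and $R$ is a finitely generated $L$-algebra, the Nullstellensatz produces an $L$-rational point $s\in \Spec R$; its fiber is a smooth projective curve $C_0/L$ of genus $g$ equipped with an $L$-morphism $f_0\colon C_0 \to X$, and because intersection numbers are locally constant in flat projective families we have $\deg_{C_0} f_0^{*}\cL = \deg_{C'} f'^{*}\cL_{L'}$. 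Applying the hypothesis on $X/L$ to $(C_0,f_0,\cL)$ then produces the desired bound, linear in $g$ in the algebraically hyperbolic case and merely a function of $(X,\cL,g)$ in the bounded case.

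The principal obstacle is the spreading-out/specialization step: one must carefully descend the data $(C',f')$ to a finitely generated $L$-subalgebra of $L'$, produce an $L$-rational specialization, and check that genus, degree, and ampleness are preserved along the way. These are standard consequences of limit arguments for finitely presented schemes and flatness, but some bookkeeping is required to separate the dependence on the line bundle from the dependence on the curve so that one can dominate an arbitrary ample class on $X_{L'}$ by a multiple of one pulled back from $X$.
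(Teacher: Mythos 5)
This statement is cited from the literature (Javanpeykar--Kamenova, Theorems 7.1 and 1.11 of \cite{JKam}); the paper does not reproduce a proof, so there is no in-paper argument to compare against. Evaluating your proposal on its own merits: the argument is correct, and it is essentially the standard spreading-out/specialization proof, which is also the mechanism in \cite{JKam}.

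A few remarks on the details. The descent direction ($X_{L'}$ algebraically hyperbolic or bounded $\Rightarrow$ $X$ is) is handled correctly; it only uses that degree, genus, smoothness, and ampleness are preserved under the faithfully flat base change $\Spec L' \to \Spec L$, and that base change is injective on Hom-sets. For the ascent direction, the reduction to a single ample class $\cL_{L'}$ pulled back from $X$ is the right move. Your invocation of ``openness of the ample cone'' to produce $m$ with $m\cL_{L'}-\cL'$ nef is fine but heavier than needed; the elementary statement that $\cL_{L'}^{\otimes m}\otimes(\cL')^{-1}$ is globally generated (hence nef) for $m\gg 0$, by the definition of ampleness, suffices and avoids appealing to the theorem of the base. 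The spreading-out step is the heart of the argument, and you correctly note that after inverting finitely many elements one may assume $\mathcal{C}\to\Spec R$ is a smooth projective family with geometrically connected fibers so that genus is constant; that intersection numbers are locally constant in flat projective families supplies the preservation of degree; and that the Nullstellensatz over the algebraically closed $L$ produces the needed closed specialization. The resulting constant $\alpha(X_{L'},\cL',g)=m\cdot\alpha(X,\cL,g)$ depends only on $(\cL,\cL')$ through $m$, not on the curve, so the logic closes. One small point worth making explicit: in the bounded case, because Hom-schemes commute with base change, an alternative route is fpqc descent of finite-typeness of $\underline{\Hom}$, but that only handles the descent direction cleanly; the ascent direction still requires the spreading-out of an arbitrary curve over $L'$ to one over $L$, so the limit argument cannot be avoided either way.
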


\subsection{Groupless varieties}
Next, we define the notion of a groupless variety from \cite{Lang:HyperbolicDiophantineAnalysis, VojtaLangExc, JKam}. 

\begin{definition}\label{defn:groupless}
Let $X/L$ be a variety. We say that $X$ is \cdef{groupless over $L$} if for every finite type connected group scheme $G/L$, every $L$-morphism $G\to X$ is constant. 
\end{definition}

We recall useful properties of being groupless.

\begin{lemma}[\protect{\cite[Lemma 2.3, Lemma 2.7, \& Proposition 4.4]{JKam}}]\label{lemma:grouplessequivgeometric}
Let $X/L$ be a variety.
\begin{enumerate}
\item If $L'/L$ is an extension of algebraically closed fields of characteristic zero, then $X$ is groupless over $L$ if and only if $X_{L'}$ is groupless over $L'$. 
\item If $X/L$ is proper, then $X$ is groupless over $L$ if and only if for every abelian variety $A$ over $L$, every $L$-morphism $A\to X$ is constant.
\item If $X/L$ is projective and bounded, then $X$ is groupless over $L$. 
\end{enumerate}
\end{lemma}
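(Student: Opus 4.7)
I would prove the three parts in order, invoking Chevalley's structure theorem for algebraic groups and the boundedness results recalled in \autoref{thm:equivalentbounded} and \autoref{thm:boundedgeometric}.

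For (1), the forward implication is essentially tautological: the base change of a constant morphism is constant, and an $L$-morphism whose base change to $L'$ is constant is itself constant. The substance lies in the converse. Given a connected finite-type $L'$-group scheme $G'$ together with an $L'$-morphism $f'\colon G' \to X_{L'}$, I would spread the pair $(G', f')$ out to a flat family over a finitely generated $L$-subalgebra $R \subset L'$. Because $L$ is algebraically closed, $\Spec R$ admits a Zariski-dense set of $L$-valued points; each such specialization yields an $L$-morphism from a connected finite-type $L$-group scheme to $X$, which is constant by the grouplessness of $X$. Since constancy of the universal morphism is a closed condition on the parameter scheme, density of the specialization locus forces $f'$ itself to be constant.

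For (2), the forward direction is immediate because abelian varieties are connected finite-type group schemes. For the converse, I assume every morphism from every abelian variety to $X$ is constant and consider a morphism $f\colon G \to X$ from a connected finite-type $L$-group scheme. In characteristic zero $G$ is automatically smooth, and Chevalley's structure theorem provides an exact sequence $1 \to H \to G \to A \to 1$ with $H$ a connected affine algebraic group and $A$ an abelian variety. If $f$ factors through $A$, the induced map $A \to X$ is constant by hypothesis. Otherwise $f$ is non-constant on some fiber of $G \to A$, so up to source-translation I may assume $f|_H\colon H \to X$ is non-constant. The key step is to convert this into a non-constant morphism from an abelian variety: exploiting that a connected affine algebraic group in characteristic zero is a rational variety, I select a birational map $\mA^n \dashrightarrow H$ and restrict the composition $\mA^n \dashrightarrow H \xrightarrow{f|_H} X$ to a general affine line $\mA^1 \subset \mA^n$, producing a non-constant rational map $\mA^1 \dashrightarrow X$. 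By smoothness of the source and properness of $X$, this extends uniquely to a non-constant morphism $\PP^1 \to X$. Precomposing with a degree-two cover $E \to \PP^1$ branched at four points, where $E$ is an elliptic curve, then yields a non-constant morphism $E \to X$, contradicting the hypothesis.

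For (3), by (2) it suffices to show that no non-constant morphism $f\colon A \to X$ from an abelian variety exists when $X$ is projective and bounded. Suppose for contradiction $f$ is non-constant, fix an ample line bundle $\mathcal L$ on $X$, and choose a smooth projective curve $\iota_C\colon C \hookrightarrow A$ of genus $g$ with $f|_C$ non-constant, so that $d := \deg_C \iota_C^*f^*\mathcal L > 0$. For each integer $n \geq 1$, the composition $f \circ [n] \circ \iota_C\colon C \to X$ is a morphism from a smooth projective curve of fixed genus $g$, and the standard pullback identity $[n]^*\mathcal M \cong \mathcal M^{\otimes (n^2+n)/2} \otimes [-1]^*\mathcal M^{\otimes (n^2-n)/2}$ applied to the nef line bundle $\mathcal M := f^*\mathcal L$ forces its $\mathcal L$-degree to be at least $\tfrac{n^2+n}{2}d$, which is unbounded as $n \to \infty$. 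This contradicts the integer $\alpha(X, \mathcal L, g)$ furnished by the $1$-boundedness characterization in \autoref{thm:equivalentbounded}. The degree blow-up under the multiplication-by-$n$ isogenies is the central mechanism of the argument, and I expect it to be the main obstacle of the lemma.
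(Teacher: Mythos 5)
The paper does not prove this lemma but defers entirely to \cite[Lemma 2.3, Lemma 2.7, Proposition 4.4]{JKam}, so there is no in-text argument to compare against. Your reconstruction is correct and, I believe, mirrors the cited source: spreading out over a finitely generated $L$-subalgebra of $L'$ for (1), Chevalley's decomposition plus unirationality of the linear kernel and a ramified elliptic double cover of $\mP^1$ for (2), and quadratic degree growth of $[n]^*$ along a curve in $A$ for (3). Two minor points are worth making explicit. In (2), the dichotomy between ``$f$ factors through $A$'' and ``$f$ is non-constant on some fiber of $G\to A$'' rests on the fact that a morphism constant on the fibers of the fppf quotient $G\to A$ descends to a morphism out of $A$; you use this silently and it deserves a sentence. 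In (3), the lower bound $\tfrac{n^2+n}{2}d$ extracted from the theorem-of-the-cube identity requires that $[-1]^*(f^*\mathcal{L})$ be nef so that its contribution to $\deg_C$ is nonnegative; you record that $f^*\mathcal{L}$ is nef, and the same observation for its $[-1]$-pullback (it is the pullback of $\mathcal{L}$ along $f\circ[-1]$) is the precise point needed to make the estimate unconditional for a non-symmetric $f^*\mathcal{L}$.
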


Using \autoref{lemma:grouplessequivgeometric} and \autoref{exam:alghyp}, we can find various examples of groupless varieties.

\subsection{Varieties of general type}
Continuing our discussion on hyperbolicity,  we recall basic definitions of varieties of general type and some deep results concerning maps to varieties of general type.

\begin{definition}\label{defn:generaltype}
Let $X/L$ be a projective variety. We say that $X$ is \cdef{of general type} if it has a desingularization $X'\to X$ with $X'/L$ a smooth projective variety such that the canonical bundle $\omega_{X'/L}$ is a big line bundle. 
\end{definition}

\begin{example}
A smooth curve $C/L$ is of general type (and hence hyperbolic) if and only if the genus $g(C)$ is greater than one.  
Using additivity of Kodaira dimension, we have that the $n$-fold product $C^n := C \times \cdots \times C$ is of general type if and only if $C$ is of general type.  
\end{example}

\begin{lemma}\label{lemma:genearltypegeometric}
Let $X/L$ be a projective variety which is of general type. 
If $L'/L$ is an extension of algebraically closed fields of characteristic zero, then $X_L$ is of general type. 
\end{lemma}

\begin{proof}
This follows from comparing the global sections of $\omega_{X/L}$ and $\omega_{X_L'/L'}$ using base change arguments. 
\end{proof}

\begin{definition}\label{defn:hyperbolic}
Let $X/L$ be a projective variety. We say that $X$ is \cdef{hyperbolic over $L$} (or:~\cdef{hereditarily of general type over $L$}) if every integral subvariety of $X$ is of general type. 
\end{definition}

\begin{example}\label{exam:ample}
If $X/L$ is a smooth projective variety with ample cotangent bundle (i.e.,  $\cO_{\mP(\Omega_X^1)}(1)$ is ample), then $X$ is hyperbolic over $L$ (see e.g., \cite[Example 6.3.28]{Lazarsfeld:Positivity2}).  
We refer the reader to \cite[pages 39 -- 43]{Lazarsfeld:Positivity2}, \cite{Brobek:Thesis}, and \cite{BrotbekDarondeau:CompleteIntersectionsAmple} for further examples of varieties with ample cotangent bundle and for methods of constructing such varieties. 
\end{example}

\begin{example}\label{exam:hyperbolicnotample}
By \cite{Ein:SubvaritiesGeneral}, a very general hypersurface of degree at least $2n-1$ in $\mP^{n}$ is hyperbolic, and work of \cite{Pacienz:SubvarietiesGeneralType} proves that a very general hypersurface of degree at least $2n-2$ in $\mP^n$ where $n\geq 6$ is hyperbolic.  
By work of Schneider \cite{Schneider:Symmetric}, we remark that a smooth hypersurface in $\mP^n$ for $n\geq 3$ cannot have ample cotangent bundle, and so the previously mentioned results give examples of hyperbolic varieties which do not have ample cotangent bundle. 
\end{example}

Next, we record a lemma asserting that hyperbolic implies groupless and how hyperbolicity behaves under base change. 

\begin{lemma}\label{lemma:hyperbolicimpliesgroupless}
Let $X/L$ be a  projective variety. 
If $X$ is hyperbolic over $L$, then $X$ is groupless over $L$.
\end{lemma}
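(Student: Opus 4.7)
The plan is to reduce the statement to abelian varieties via \autoref{lemma:grouplessequivgeometric}(2): since $X$ is proper over $k$, $X$ is groupless over $k$ if and only if every morphism from an abelian variety to $X$ is constant. Accordingly, I would fix an abelian variety $A$ over $k$ together with a morphism $f \colon A \to X$, and argue that $f$ must be constant.

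Suppose for contradiction that $f$ is non-constant. Then the image $Y := f(A)$ is closed in $X$ (because $A$ is projective) and irreducible (because $A$ is), hence a positive-dimensional integral subvariety of $X$. By the hyperbolicity hypothesis on $X$, this forces $Y$ to be of general type, so $\kappa(\widetilde Y) = \dim Y > 0$ for any desingularization $\widetilde Y \to Y$. On the other hand, $f$ factors as a surjective morphism $A \twoheadrightarrow Y$, and resolving the induced dominant rational map $A \dashrightarrow \widetilde Y$ on a suitable blow-up $\widetilde A \to A$ yields a surjective morphism $\widetilde A \twoheadrightarrow \widetilde Y$ of smooth projective varieties with $\widetilde A$ birational to an abelian variety. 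I would then invoke the classical theorem of Ueno--Kawamata (see \cite{Ueno}, as already used in the introduction for the subvariety case) which asserts that the image of an abelian variety under a morphism to a smooth projective variety has Kodaira dimension at most zero; this gives $\kappa(\widetilde Y) \leq 0$, contradicting the previous sentence. Hence $f$ is constant.

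The only substantive ingredient is the Ueno--Kawamata statement, which is standard; its classical proof uses that $\Omega_A$ is trivial and that $f^* \Omega_{\widetilde Y}$ embeds generically into $\Omega_{\widetilde A}$, thereby bounding the pluricanonical sections of $\widetilde Y$. Should one wish to avoid the external reference, carrying out this differential-form argument in detail would be the only obstacle; otherwise the proof combines immediately with \autoref{lemma:grouplessequivgeometric}(2) to yield the lemma.
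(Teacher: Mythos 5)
Your proof is correct and follows essentially the same path as the paper: both reduce, via \autoref{lemma:grouplessequivgeometric}(2), to showing that an abelian variety cannot dominate a positive-dimensional variety of general type, and both close the argument with Kodaira-dimension considerations ultimately rooted in Kawamata's ``Characterization of abelian varieties.'' The difference is one of packaging. The paper runs the machinery explicitly: it takes a Stein factorization $A' \to X'$, uses adjunction on a general fiber $F$ (with $K_{A'}|_F \cong K_F$ and $K_{A'}$ effective) to get $\kappa(F) \geq 0$, and then invokes Kawamata's subadditivity $\kappa(A') \geq \kappa(F) + \kappa(X')$ for fibrations over a general-type base to derive $\kappa(X') \leq 0$, a contradiction. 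You instead cite as a black box the Ueno--Kawamata statement that the image of an abelian variety in a smooth projective variety has Kodaira dimension at most zero; this is a correct and well-known consequence of the same Kawamata subadditivity theorem (obtained by Stein factorizing and comparing Kodaira dimensions along the finite part), so your route is sound, but it displaces the real work into the citation. Two small caveats: (i) your pointer to \cite{Ueno} is a slight mislabel --- the statement you want is really Kawamata's theorem on fiber spaces from abelian varieties (or its corollary on images), not Ueno's subvariety result cited in the introduction; and (ii) your sketch of the differential-form alternative (``$f^*\Omega_{\widetilde Y}$ embeds in $\Omega_{\widetilde A}$ with $\Omega_A$ trivial'') does not go through literally as written, since after resolving $A \dashrightarrow \widetilde Y$ to a morphism $\widetilde A \to \widetilde Y$ the cotangent bundle $\Omega_{\widetilde A}$ is no longer trivial; one would have to work with $A \to Y$ directly or argue via the Stein factorization as the paper does.
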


\begin{proof}
By our definition of hyperbolic and \autoref{lemma:grouplessequivgeometric}.(2), it suffices to prove that an abelian variety $A/L$ cannot dominant a positive-dimensional projective variety of general type. 
As such, we may reduce to showing that there does not exist a dominant morphism $f\colon A \to X$. 
By replacing $f$ by a birational model of the Stein factorization, we may assume that $f'\colon A'\to X'$ is a projective morphism of smooth varieties with $f_*'\cO_{A'} = \cO_{X'}$, $X'$ is of general type, and $A'\to A$ is birational. Let $F$ be the general fiber, then the Kodaira dimension $\kappa(F) \geq 0$ since $K_{A'|F}\cong K_{F}$. 
By \cite[Theorem 3]{Kawamata:Characterization},  we have that $\kappa(A') \geq \kappa(F) + \kappa(X')$, but this would imply that $\dim(X) = \kappa(X') = 0$, which is a contradiction. 
\end{proof}

\begin{lemma}\label{lemma:basechangehyper}
Let $X/L$ be a  projective hyperbolic variety.  
For every algebraically closed field extension $L'/L$,  every integral subvariety of $X_{L'}$ admits a dominant morphism to a variety of general type defined over $L'$. 
\end{lemma}

\begin{proof}
Let $Z'$ be an integral subvariety of $X_{L'}$. 
If $Z'$ is of the form $Z_{L'}$ where $Z$ is an integral subvariety of $X$, then the result follows from \autoref{lemma:genearltypegeometric}. 
Otherwise, consider the projection map $\pi_{L'/L}\colon X_{L'} \to X,$ which is quasi-compact by \cite[\href{https://stacks.math.columbia.edu/tag/01K5}{Tag 01K5}]{stacks-project}. 
Consider the set-theoretic image $\pi_{L'/L}(Z')$ and the reduced induced scheme structure $Z$ on $\pi_{L'/L}(Z')$.  Note that $Z$ is an integral subvariety of $X$. 
Since $\pi_{L'/L}$ is quasi-compact,  \cite[\href{https://stacks.math.columbia.edu/tag/01R8}{Tag 01R8}]{stacks-project} tells us that $\pi_{L'/L}(Z')$ is dense in $Z$, and since $Z'$ is a closed subvariety of $X_{L'}$, we have that the restriction $\pi_{L'/L \vert Z'}\colon Z' \to Z$ is a dominant morphism.
After pulling back along $\Spec(L')\to \Spec(L)$, we have a dominant morphism $Z' \to Z_{L'}$, where $Z_{L'}$ is of general type by \autoref{lemma:genearltypegeometric}. 
\end{proof}

\subsection{Dominant morphisms to varieties of general type}
Varieties of general type enjoy many finiteness results related to dominant rational maps, which we now recall.

The classical work of de Franchis and Severi states that for a smooth hyperbolic curve $C/L$ and a given projective curve $C'/L$, there are only finitely many non-constant (hence surjective) morphisms $C'\to C$, and for a fixed projective curve $C'/L$, there are only finitely many smooth projective hyperbolic curves $C/L$ for which there exists a non-constant morphism $C'\to C$.

There are higher dimensional versions of these results.  For the first statement, Kobayashi and Ochiai proved the following.

\begin{theorem}[\protect{\cite[Theorem 1]{KobayashiOchiai:MeromorphicMappings}}]\label{thm:KobayashiOchiai}
Let $X/L$ be a projective variety of general type. 
Then, for every projective variety $Y/L$, the set of dominant rational maps $f\colon Y \dashrightarrow X$ is finite.
\end{theorem}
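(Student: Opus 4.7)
The plan is to combine bigness of the canonical bundle with Hilbert scheme finiteness plus a rigidity argument.

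First, I would reduce to the case of morphisms between smooth projective varieties. By taking desingularizations we may replace $X$ by a smooth projective model $X'$ with $K_{X'}$ big, and for each rational map $f\colon Y\dashrightarrow X$ we may resolve the indeterminacy by a modification $\tilde Y_f\to Y$. A uniform resolution can be chosen compatibly only after an initial boundedness result, so I would instead work with the graph $\Gamma_f\subset Y\times X$ and study the locally closed subscheme of the Hilbert scheme $\mathrm{Hilb}(Y\times X)$ parametrizing closed subschemes that are birational to $Y$ under the first projection and dominate $X$ under the second.

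Second, I would establish that this locus of ``dominant graphs'' is of finite type. The key input is that since $K_{X'}$ is big, there exists a fixed integer $m_0$ so that the pluricanonical system $|m_0 K_{X'}|$ induces a birational map onto its image in some $\mathbb{P}^N$. Pulling back nonzero sections of $m K_{X'}$ along a dominant morphism $\tilde Y\to X$ (on appropriate resolutions) gives, generically, an injection into $H^0(\tilde Y, mK_{\tilde Y})$ modulo ramification; this controls the degree of the graph with respect to a polarization of $Y\times X$ uniformly in $f$, and hence bounds the Hilbert polynomial to lie in a finite set. By Grothendieck's theory of Hilbert schemes, the corresponding moduli is therefore quasi-projective.

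Third (and this is where I expect the main difficulty), I would show this quasi-projective moduli is zero-dimensional, i.e.\ that dominant rational maps $Y\dashrightarrow X$ are rigid. If there were a positive-dimensional connected family $\{f_t\}_{t\in T}$ of such maps parametrized by a smooth curve $T$, one obtains a dominant rational map $F\colon Y\times T\dashrightarrow X$. After resolving $Y\times T$ to a smooth projective $W$ and applying Stein factorization, I would appeal to Kawamata's subadditivity result (the same one used in the proof of \autoref{lemma:hyperbolicimpliesgroupless}) to derive a contradiction: one obtains $\kappa(W)\geq \kappa(F)+\kappa(X')=\dim(X')+\kappa(F)$, forcing the general fiber of $F$ to have non-negative Kodaira dimension, while the fact that these fibers arise from a family of dominant maps to the same $X$ of general type forces them to be contracted, giving $\dim F(W)<\dim X'$ and contradicting dominance. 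The delicate point is handling base loci of $F$ and ensuring that the family structure is nontrivial transverse to the fibers, which is exactly where Kobayashi--Ochiai's original analytic argument (using Brody-type curves through the deformation direction) is cleanest; algebraically one must track ramification and exceptional loci carefully.

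Finally, combining finite type with zero-dimensionality, and noting that the moduli space is separated, yields finiteness of the set of dominant rational maps $Y\dashrightarrow X$.
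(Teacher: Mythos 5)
The paper does not prove this statement; it is cited directly from Kobayashi--Ochiai's original paper, so there is no in-text proof to compare against. That said, your sketch follows the standard \emph{algebraic} strategy for Kobayashi--Ochiai (boundedness of graphs via pluricanonical pullback, then rigidity of the parameter space), which is genuinely different from the original analytic argument of Kobayashi--Ochiai using volume forms and normal families. Steps~1 and~2 are essentially correct in spirit: the injection $f^*\colon H^0(X',m_0K_{X'})\hookrightarrow H^0(\wt Y, m_0 K_{\wt Y})$ for dominant generically finite $f$ between smooth models, together with birationality of the $m_0$-pluricanonical map of $X'$, bounds the data of $f$ (equivalently the composition $\phi_{m_0K_{X'}}\circ f$) inside a Grassmannian of fixed-dimensional subspaces of $H^0(\wt Y, m_0K_{\wt Y})$, which is quasi-projective.

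Step~3 (rigidity) has a genuine gap. You invoke Kawamata's subadditivity exactly as in \autoref{lemma:hyperbolicimpliesgroupless}, but the situations are not parallel. There the source is an abelian variety with $\kappa=0$, and one knows the general fiber of the Stein factorization has $\kappa\geq 0$ since its canonical bundle is a restriction of the trivial $K_A$; the inequality $0\geq\kappa(F)+\dim X'$ then gives the contradiction. In your situation the source is $Y\times T$ with $Y$ an arbitrary projective variety and $T$ the parameter curve, and nothing controls $\kappa(Y\times T)$: if $T$ is rational, $\kappa(Y\times T)=-\infty$ and the inequality $-\infty\geq\kappa(F)+\dim X'$ is satisfied by $\kappa(F)=-\infty$ with no contradiction; if $g(T)\geq 1$, you still only get $\kappa(Y)+\kappa(T)\geq\kappa(F)+\dim X'$, and since $\kappa(Y)$ and $\kappa(F)$ are both unconstrained, no contradiction follows. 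In particular, your claim that the inequality ``forc[es] the general fiber of $F$ to have non-negative Kodaira dimension'' reverses the direction of the logic, and the assertion that the fibers of $F$ are ``contracted, giving $\dim F(W)<\dim X'$'' contradicts the hypothesis that $F$ is dominant, so there is no contradiction to draw.

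A further caution: for $F\colon W\to X$ dominant with $\dim W>\dim X$, one does \emph{not} have $F^*H^0(X,mK_X)\hookrightarrow H^0(W,mK_W)$; the pullback lands in $H^0(W,m\wedge^{\dim X}\Omega^1_W)$, not in the pluricanonical series of $W$. The comparison with $K_{Y\times T}=p_1^*K_Y+p_2^*K_T$ that you would need is therefore not available without more care, and the case $g(T)=0$ cannot be excluded by the argument ``$H^0(T,m_0K_T)=0$.'' The correct rigidity step in the algebraic proofs (Maehara, Deschamps--M\'en\'egaux) instead studies the variation of the subspaces $V_t = f_t^*H^0(X,m_0K_X)\subseteq H^0(Y,m_0K_Y)$ in the Grassmannian and shows this map $T\to\mathrm{Gr}$ is constant, after which $f_t = \phi_{m_0K_{X'}}^{-1}\circ\phi_{V_t}$ is constant. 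Kobayashi--Ochiai's own proof is analytic and does not pass through Iitaka-type subadditivity at all. So your plan needs a different (and genuinely more delicate) input at the rigidity step.
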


The higher dimensional version of the second statement was known as the Severi--Iitaka conjecture, which was proved through the work of Maehara \cite{Maehara:FinitenessProperty} and Hacon--McKernan \cite{HaconMcKernan:BoundednessPluricanonical}, Tsuji \cite{Tsuji:Pluricanonical1, Tsuji:Pluricanonical2}, and Takayama \cite{Takayama:Pluriccanonical}.  

\begin{theorem}\label{thm:SeveriIitaka}
For any fixed variety $X/L$, there exist only finitely many dominant rational maps $X \dashrightarrow Y$ (modulo birational equivalence) where $Y/L$ is a variety of general type. 
\end{theorem}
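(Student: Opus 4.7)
The plan is to reduce the Severi--Iitaka finiteness to the effective birationality of pluricanonical maps for varieties of general type due to Hacon--McKernan, Takayama, and Tsuji, using Theorem~\ref{thm:KobayashiOchiai} (Kobayashi--Ochiai) for rigidity. The key observation is that any dominant rational map $f\colon X \dashrightarrow Y$ with $Y$ of general type is, up to birational equivalence of the target, determined by a sublinear system inside some fixed pluricanonical system on $X$.

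First, since a dominant $f$ forces $\dim Y \leq \dim X$, only finitely many target dimensions $d$ are relevant, and it suffices to prove finiteness for each $d$ separately. Fixing $d$, the Hacon--McKernan--Takayama--Tsuji theorem produces an integer $m = m(d)$, depending only on $d$, such that the pluricanonical map $\phi_{mK_Y}$ is birational onto its image for every smooth projective variety $Y$ of general type of dimension $d$. Consequently, the canonical models of $d$-dimensional general type varieties form a bounded family, parametrized by a scheme of finite type (essentially a union of Hilbert schemes of subvarieties of bounded degree inside a projective space of bounded dimension).

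Given such a dominant $f\colon X \dashrightarrow Y$, after resolving indeterminacy, pullback of pluricanonical forms yields an injection $f^{*}\colon H^{0}(Y, mK_{Y}) \hookrightarrow H^{0}(X, mK_{X})$. Writing $V_{f} := f^{*} H^{0}(Y, mK_{Y})$, the rational map $\phi_{V_{f}}\colon X \dashrightarrow \mathbb{P}(V_{f}^{\vee})$ factors $\phi_{mK_{Y}} \circ f$, so its image is birational to $Y$. Hence the birational equivalence class of $(Y, f)$ is recorded by the subspace $V_{f}$ of the fixed finite-dimensional vector space $H^{0}(X, mK_{X})$, producing an injection of equivalence classes into a suitable Grassmannian, and hence into a scheme of finite type.

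To upgrade \emph{finite type} to \emph{finite}, one invokes rigidity: a positive-dimensional family of classes $(Y_{t}, f_{t})$ would restrict, on a general fiber, to an infinite family of dominant rational maps $X \dashrightarrow Y_{0}$ for a single variety $Y_{0}$ of general type, contradicting Theorem~\ref{thm:KobayashiOchiai}. The main obstacle is the second step above, namely the uniform effective birationality bound $m = m(d)$, whose proof is the culmination of a substantial program in the minimal model program and higher-dimensional birational geometry due to Hacon--McKernan, Takayama, and Tsuji; once this is granted, the rest of the argument is essentially a pullback-of-pluricanonical-forms construction combined with Theorem~\ref{thm:KobayashiOchiai}.
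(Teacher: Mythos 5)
The paper does not actually prove \autoref{thm:SeveriIitaka}; it simply records the statement and cites the works of Maehara, Hacon--McKernan, Tsuji, and Takayama where the result is established. So your task was to supply a proof the authors deliberately omitted, and the high-level roadmap you propose --- uniform effective birationality of $\phi_{mK_Y}$ for fixed $\dim Y$, plus rigidity via Kobayashi--Ochiai --- is indeed the backbone of Maehara's reduction. However, your central construction fails.

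The step ``pullback of pluricanonical forms yields an injection $f^{*}\colon H^{0}(Y, mK_{Y}) \hookrightarrow H^{0}(X, mK_{X})$'' is simply false whenever $\dim Y < \dim X$, and the theorem explicitly allows such fibrations. If $f\colon X \to Y$ has positive-dimensional fibers, then $f^{*}$ sends $m$-pluricanonical forms on $Y$ into $H^{0}\bigl(X, (\wedge^{\dim Y}\Omega_{X})^{\otimes m}\bigr)$, not into $H^{0}(X, mK_{X})$; there is no natural way to complete such a form to a top-degree pluricanonical form without a nonvanishing section of $mK_{X/Y}$, which need not exist. A concrete counterexample: take $X = \PP^{1} \times C$ with $C$ a curve of genus $\geq 2$ and $f = \pr_{2}\colon X \to C$. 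Here $H^{0}(C, mK_{C}) \neq 0$ for $m \geq 1$, but $H^{0}(X, mK_{X}) = H^{0}(\PP^{1}, \OO(-2m)) \otimes H^{0}(C, mK_{C}) = 0$, so no injection can exist and the subspace $V_{f}$ you want to use to parametrize the pair $(Y, f)$ is not available. Two smaller issues compound this: (i) ``the canonical models of $d$-dimensional general type varieties form a bounded family'' needs a volume bound on $Y$, which you have not supplied (it can be extracted by slicing $X$ by a generic complete intersection of dimension $d$ and applying Riemann--Hurwitz to the induced generically finite map to $Y$, but this has to be said); (ii) the rigidity step as stated does not immediately give a contradiction, since in a positive-dimensional component of the parameter space the targets $Y_{t}$ could a priori be pairwise non-birational --- one must first argue that all $Y_{t}$ in a fixed component are birational to each other before \autoref{thm:KobayashiOchiai} applies. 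Maehara's actual argument avoids the pluricanonical-pullback trap by working with the graph $\Gamma_{f} \subset X \times Y'$ (where $Y'$ is the image of $\phi_{mK_{Y}}$) inside a Hilbert scheme of bounded Hilbert polynomial, and this is the part you would need to rework.
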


\subsection{The conjectures of Demailly, Green--Griffiths, Javanpeykar--Kamenova, and Lang}
To conclude this section, we state the conjecture of Demailly \cite{Demailly}, Green--Griffiths \cite{GreenGriffiths:Conj}, Javanpeykar--Kamenova \cite{JKam}, and Lang \cite{Lang:HyperbolicDiophantineAnalysis} which posit that the various notions of hyperbolicity we defined above are in fact all equivalent. 

\begin{conjecture}[Demailly, Green--Griffiths, Javanpeykar--Kamenova, and Lang]\label{conj:DGGJKL}
Let $X/L$ be a  projective variety. Then, the following are equivalent:
\begin{enumerate}
\item $X$ is algebraically hyperbolic over $L$ (\autoref{defn:algebraicallyhyperbolic}),
\item $X$ is bounded over $L$ (\autoref{defn:bounded}),
\item $X$ is groupless over $L$ (\autoref{defn:groupless}),
\item $X$ is hyperbolic over $L$ (\autoref{defn:hyperbolic}). 
\end{enumerate}
\end{conjecture}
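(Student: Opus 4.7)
The plan is to establish the four-way equivalence as a chain of implications, drawing on the background results already assembled in the paper. Two of the four arrows are essentially immediate. For $(1)\Rightarrow(2)$, condition $(1)$ produces a bound $\alpha(X,\mathcal{L})g(C)$ which, for any fixed $g\geq 0$, depends only on $X$, $\mathcal{L}$, and $g$, so the third clause of \autoref{thm:equivalentbounded} gives boundedness. The implication $(2)\Rightarrow(3)$ is recorded directly as \autoref{lemma:grouplessequivgeometric}.(3). The real content therefore lies in the implication $(4)\Rightarrow(2)$, which is the novel contribution of this paper (\autoref{xthm:main1}), and in closing the loop at $(3)\Rightarrow(4)$ (equivalently $(3)\Rightarrow(1)$), which is the classical open heart of the Lang--Demailly conjecture.

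For $(4)\Rightarrow(2)$ I would follow the non-Archimedean strategy sketched in the introduction. After invoking \autoref{thm:boundedgeometric} to descend to a countable algebraically closed subfield $k$ over which $X$ is defined, base change to the complete non-Archimedean field $K$ to form the constant $K$-analytic space $X_K^{\mathrm{an}}$, which by the conventions is reduced, Hausdorff, boundaryless, path-connected, and compact. Construct the $K$-Kobayashi pseudo-metric $d_{X_K^{\mathrm{an}}}$ as a non-Archimedean analogue of the classical Kobayashi pseudo-metric. Under hypothesis $(4)$, the finiteness of dominant maps to varieties of general type (\autoref{thm:KobayashiOchiai}) combined with Rodr\'iguez V\'azquez's normal-family criteria for analytic morphisms from basic tubes forces the relevant family of analytic morphisms into $X_K^{\mathrm{an}}$ to be normal; this upgrades $d_{X_K^{\mathrm{an}}}$ to an honest metric (\autoref{xthm:HyperbolicImpliesLeaIntro}), and by \autoref{xthm:LeaimpliesBarthintro} the induced topology coincides with the Berkovich topology. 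A non-Archimedean transcription of Kobayashi's classical argument --- applying Arzel\`a--Ascoli to a hypothetical sequence of morphisms $f_n\colon C\to X_K$ of unbounded $\mathcal{L}$-degree --- then produces equicontinuity which contradicts the presence of arbitrarily large degree, so \autoref{thm:equivalentbounded} yields $(2)$.

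The central obstacle is $(3)\Rightarrow(4)$: one must show that a groupless smooth projective variety has \emph{every} integral subvariety of general type. Any attack appears to require, for each positive-dimensional subvariety $Z\subseteq X$ failing to be of general type, a mechanism producing a non-constant morphism from a connected algebraic group (typically an abelian variety or $\mathbb{G}_{\mathrm{m}}$) to $Z$, directly contradicting grouplessness. The natural tool is the Iitaka fibration together with additivity-type conjectures of Iitaka and Viehweg and a structure theorem for varieties of negative Kodaira dimension: from these one hopes to exhibit on $Z$ either a rational curve (when $\kappa(Z)=-\infty$) or a fibration whose geometric generic fibre is birational to an abelian variety (when $0\leq\kappa(Z)<\dim Z$). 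In low dimension this program succeeds using the Enriques--Kodaira classification, which is exactly the ingredient that converts \autoref{xthm:main1} into the full equivalence \autoref{xcoro:main1} for surfaces; in higher dimension, the above conjectures and the non-existence of rational curves on arbitrary groupless varieties remain inaccessible, and this is where the conjecture stands.
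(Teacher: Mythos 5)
You have correctly recognized that the statement you were handed is a \emph{conjecture}, not a theorem: the paper does not prove \autoref{conj:DGGJKL}, and there is no ``paper's own proof'' to compare against. Your accounting of what is known matches the paper's own discussion immediately following the conjecture: $(1)\Rightarrow(2)$ is a direct consequence of \autoref{thm:equivalentbounded}; $(2)\Rightarrow(3)$ is \autoref{lemma:grouplessequivgeometric}.(3); $(4)\Rightarrow(3)$ is \autoref{lemma:hyperbolicimpliesgroupless}; and $(4)\Rightarrow(2)$ is this paper's \autoref{xthm:main1}, whose proof you have sketched faithfully via $K$-Lea hyperbolicity, the non-Archimedean Barth theorem, normal families, and Arzel\`a--Ascoli. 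Your assessment that the remaining implications are genuinely open --- and that the missing ingredient is a mechanism producing group actions or rational curves on subvarieties failing to be of general type, which is exactly what the Enriques--Kodaira classification supplies for \autoref{xcoro:main1} --- is also correct.

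One logical imprecision is worth flagging. You write that closing the loop requires $(3)\Rightarrow(4)$, ``equivalently $(3)\Rightarrow(1)$.'' Given the four known arrows $1\Rightarrow 2$, $2\Rightarrow 3$, $4\Rightarrow 3$, $4\Rightarrow 2$, these two statements are \emph{not} equivalent, and neither one alone suffices to close the cycle. If you add only $(3)\Rightarrow(4)$, you obtain $2\Leftrightarrow 3\Leftrightarrow 4$ but nothing yet maps into $(1)$, because $(1)$ requires a bound \emph{linear} in the genus while \autoref{xthm:main1} only gives a bound depending arbitrarily on the genus; that strengthening ($(2)\Rightarrow(1)$) is independently open. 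Conversely, adding only $(3)\Rightarrow(1)$ yields $1\Leftrightarrow 2\Leftrightarrow 3$ but nothing maps into $(4)$. The full equivalence requires two new arrows, for instance both $(3)\Rightarrow(4)$ and $(3)\Rightarrow(1)$, or equivalently $(3)\Rightarrow(4)$ together with $(2)\Rightarrow(1)$. Since the whole point of your final paragraph is to delineate what remains open, it is worth being precise that there are in fact two independent open gaps: the groupless-implies-hyperbolic implication, and the upgrade from a genus-dependent bound to a bound linear in the genus.
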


From our above discussion, we have the following immediate implications of \autoref{conj:DGGJKL}:
\[
\text{(1)}\Rightarrow\text{(2)}\Rightarrow\text{(3}) \text{ and }\text{(4)}\Rightarrow\text{(3)}. 
\]
By work of \cite{Bloch26, Ochiai77, Kawamata, Demailly}, we know that \autoref{conj:DGGJKL} holds for a closed subvariety of an abelian variety, and the references from \autoref{exam:alghyp} and \autoref{exam:hyperbolicnotample} imply that this conjecture holds for hypersurfaces of sufficiently large degree.

Our \autoref{xthm:main1} proves that (4)$\Rightarrow$(2) in full generality i.e.,  a  projective hyperbolic variety is bounded. 

\begin{remark}
\autoref{conj:DGGJKL} is known as the weak form of the conjectures of Demailly, Green--Griffiths, Javanpeykar--Kamenova, and Lang. The strong form of this conjecture predicts that the appropriate ``pseudoifications'' the above notions are equivalent (cf.~Section \ref{sec:intro}), and the strongest for of this conjecture describes certain exceptional loci related to each of these above and claims that these
exceptional loci are all equal. We refer the reader to \cite{Javanpeykar:Survey} for further discussion. 
\end{remark}

\section{\bf  Metric and topological preliminaries}
\label{sec:pseudomericback}

In this section, we discuss background on pseudo-metrics, metric induced topologies,  inner metric spaces, and a form of the Arzela--Ascoli theorem. 

\subsection{Pseudo-metrics and metric induced topologies}
We follow the exposition from \cite[Chapter 1]{Kobayashi}.  
These definitions and results will be used in our proof of \autoref{xthm:LeaimpliesBarthintro}. 
First, we recall the definition of a pseudo-metric, which is a non-negative real-valued function that behaves like a metric but it does not necessarily distinguish between two distinct points. 

\begin{definition}
A \cdef{pseudo-metric space} is a pair $(M,d)$ where $M$ is a set together with a non-negative real-valued function $d\colon M\times M \to \mR_{\geq 0}$ (called a \cdef{pseudo-metric}) such that for every $x,y,z\in M$:
\begin{enumerate}
\item $d(x,x) = 0$,
\item $d(x,y) = d(y,x)$,
\item $d(x,z) \leq d(x,y) + d(y,z)$.
\end{enumerate}
\end{definition}

For $(M,d)$ a pseudo-metric space where $M$ is a topological space, one can show that the pseudo-metric $d$ will induce the given topology on $M$ in certain situations, which we describe below. 

\begin{definition}\label{defn:rectifiable}
Let $M$ be an path-connected Hausdorff topological space with pseudo-metric $d$, which is continuous on $M$.
Given a path $\gamma(t)$, $0\leq t \leq 1$ in $M$, the \cdef{length} $L(\gamma)$ of $\gamma$ is defined by
\[
L(\gamma) = \sup \sum_{i=1}^k d(\gamma(t_{i-1}),\gamma(t_i))
\]
where the supremum is taken over all partitions $0 = t_0 < t_1 < t_2 < \cdots < t_k = 1$ of the interval $[0,1]$. 
A path is said to be \cdef{$d$-rectifiable} if its length $L(\gamma)$ is finite. 
\end{definition}

\begin{definition}\label{defn:innerpseudo}
We define a new pseudo-metric $d^i$ called the \cdef{inner pseudo-distance} induced by $d$ by setting
\[
d^i(x,y) = \inf L(\gamma)
\]
where the infimum is taken over all $d$-rectifiable paths $\gamma$ joining $x$ and $y$. 

Clearly, we have that 
\[
d(x,y)\leq d^i(x,y)
\]
for all $x,y \in M$, and we say that a pseudo-metric is \cdef{inner} if $d^i = d$. 
\end{definition}

\begin{prop}[\protect{\cite[Proposition 1.1.18.(3)]{Kobayashi}}]\label{prop:innermetrictopology}
Let $M$ be a locally compact, path-connected, Hausdorff topological and $d$ an inner pseudo-metric on $M$, which is continuous on $M$.
If $d$ is an inner metric, then $d$ induces the given topology of $M$.
\end{prop}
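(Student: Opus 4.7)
The goal is to show that the topology $\tau_d$ induced by $d$ coincides with the given topology $\tau$ on $M$. The easy inclusion $\tau_d \subseteq \tau$ follows immediately from the continuity hypothesis on $d$: for each $x \in M$ and $r > 0$, the open ball $B_d(x,r) = \{y \in M : d(x,y) < r\}$ is the preimage of $(-\infty, r)$ under the continuous map $d(x,\cdot) \colon M \to \mR_{\geq 0}$, hence lies in $\tau$. The nontrivial direction is $\tau \subseteq \tau_d$, which amounts to showing that for every $x \in M$ and every $\tau$-open neighborhood $U$ of $x$ there exists $\epsilon > 0$ with $B_d(x,\epsilon) \subseteq U$.

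The plan is to produce $\epsilon$ from local compactness. First, choose a compact $\tau$-neighborhood $K$ of $x$ with $K \subseteq U$; the case $K = M$ being trivial, I will assume $K \neq M$. Since $d$ is an inner metric taking only finite values, $d^i(a,b) < \infty$ for all $a,b$, so $M$ is path-connected by $d$-rectifiable paths; consequently a proper closed set like $K$ cannot be clopen and $\partial K \neq \emptyset$. Because $\partial K$ is compact, $d(x,\cdot)$ is continuous, and $d$ is a genuine metric with $x \in \Int(K)$, the minimum $\epsilon := \min_{z \in \partial K} d(x,z)$ is attained and strictly positive; this is the one point where the non-degeneracy of $d$ is used essentially.

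It then remains to verify $B_d(x,\epsilon) \subseteq K$. Suppose for contradiction that $y \in B_d(x,\epsilon) \setminus K$. Invoking $d = d^i$, pick a $d$-rectifiable path $\gamma \colon [0,1] \to M$ with $\gamma(0) = x$, $\gamma(1) = y$, and $L(\gamma) < \epsilon$. Since $\gamma$ is $\tau$-continuous with $\gamma(0) \in \Int(K)$ and $\gamma(1) \notin K$, the value $t_0 := \sup\{t \in [0,1] : \gamma(t) \in K\}$ satisfies $\gamma(t_0) \in \partial K$: the supremum is attained because $K$ is $\tau$-closed, and $\gamma(t_0)$ cannot lie in $\Int(K)$ because continuity of $\gamma$ would then force $\gamma(t) \in K$ for some $t > t_0$. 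Writing $z := \gamma(t_0)$ and applying the trivial partition $\{0, t_0\}$ in \autoref{defn:rectifiable} gives $d(x,z) \leq L(\gamma|_{[0,t_0]}) \leq L(\gamma) < \epsilon$, contradicting $d(x,z) \geq \epsilon$.

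The main point to be careful about is the interplay between the two topologies: the paths in the definition of length are continuous with respect to the ambient topology $\tau$, while their lengths are measured by $d$, and the bridge is local compactness together with the sub-additivity $d(\gamma(a),\gamma(b)) \leq L(\gamma|_{[a,b]})$ coming from the trivial partition. Once this bookkeeping is kept straight, the argument is the standard proof that an inner metric on a locally compact Hausdorff space recovers the ambient topology.
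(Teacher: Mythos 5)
Your proof is correct. Note that the paper does not reproduce a proof of this proposition at all: it is quoted verbatim from Kobayashi's book (Proposition 1.1.18(3) of \emph{Hyperbolic Complex Spaces}), so there is no in-paper argument to compare against. Your argument is the standard one from that reference, with the right bookkeeping: the inclusion $\tau_d\subseteq\tau$ from continuity of $d$; local compactness plus Hausdorffness to extract a compact neighborhood $K\subseteq U$ of $x$; the observation that $d=d^i<\infty$ forces $M$ to be rectifiably path-connected, hence connected, so $\partial K\neq\emptyset$ when $K\neq M$; nondegeneracy of the metric to get $\epsilon=\min_{z\in\partial K}d(x,z)>0$; and the exit-time argument $t_0=\sup\{t:\gamma(t)\in K\}$ together with $d(x,\gamma(t_0))\leq L(\gamma|_{[0,t_0]})\leq L(\gamma)<\epsilon$ to derive the contradiction. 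All the subtleties you flag (the interplay of $\tau$ and $\tau_d$, where nondegeneracy enters, the finite-value/connectedness point) are the genuine pressure points, and you handle each correctly.
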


\subsection{Inner metric spaces}
We will use \autoref{prop:innermetrictopology} to prove \autoref{xthm:LeaimpliesBarthintro}.  
To use this result, we will need to understand when inner metrics exist on our test objects. 
First, we recall the definition of an inner metric. 

\begin{definition}
Let $M$ be an path-connected Hausdorff topological space with metric $d$, which is continuous on $M$. 
If $d$ is inner (i.e., $d^i = d$), then we say that $(M,d)$ is an \cdef{inner metric space}. 
\end{definition}

In general, a metric space $(M,d)$ need not be an inner metric. 
However, in favorable situations, one can find another metric $d'$ which preserves the original metric topology for which the new metric space $(M,d')$ is inner and every pair of distinct points in $(M,d')$ can be joined by a minimizing geodesic.  
%

\begin{theorem}\label{thm:affinoidsgeodesic}
A compact, path-connected, locally path-connected, locally compact metric space $(M,d)$ admits a topology preserving metric $d'$ such that $(M,d')$ is an inner metric space where any pair of distinct points in $M$ can be joined by a minimizing geodesic with respect to $d$. 
\end{theorem}

\begin{proof}
By (independent) results of Bing \cite[Theorem 8]{Bing:partitioning} and Moise \cite[Theorem 4]{Moise:Grille},  a compact, path-connected,  and locally path-connected metric space $(M,d)$ admits a topology preserving convex metric.
A result of Menger \cite[Corollary 2.4.12 \& Theorem 2.4.16]{BuragoBuragoIvanov:CourseMetric} tells us that a complete (hence compact) and convex metric space is an inner metric space, and to conclude, the Hopf--Rinow theorem \cite[Theorem 2.5.3]{BuragoBuragoIvanov:CourseMetric} states that for a complete (hence compact) and locally compact inner metric space, any two points can be connected by a minimizing geodesic. 
\end{proof}

%
%
%
%
%
%
%

\subsection{Mappings into metric spaces}
By virtue of \autoref{xthm:LeaimpliesBarthintro}, \autoref{xthm:main1} is a statement concerning analytic maps into a metric space. Results concerning such maps can be direct consequences of purely topological results about the space of maps into metric spaces. 
We recall the setup and statement of the Arzela--Ascoli theorem. 

Given two topological spaces $M,N$, we denote by $\mathcal{C}(M,N)$ the space of all continuous maps $f\colon M\to N$ equipped with the compact-open topology. If $N$ is a metric space, then this topology coincides with the topology of uniform convergence on compact sets, and if further $M$ is compact, this topology coincides with the topology of uniform convergence.

Suppose that $M,N$ are locally compact, separable, Hausdorff spaces with pseudo-metrics $d_M$ and $d_N$, respectively. 
Under these conditions, $\mathcal{C}(M,N)$ is second countable, and hence compactness is equivalent to sequential compactness. Let $\mathcal{D}(M,N) \subseteq \mathcal{C}(M,N)$ denote the family of distance-decreasing maps i.e., maps such that 
\[
d_N(f(x),f(y))\leq d_M(x,y)
\]
for all $x,y\in M$. 
Note that $\mathcal{D}(M,N)$ is closed in $\mathcal{C}(M,N)$. 

Below, we state a variant of the Arzela--Ascoli theorem.

\begin{theorem}[\protect{Arzela--Ascoli Theorem \cite[Corollary 1.3.2]{Kobayashi}}]\label{thm:ArzelaAscoli}
Let $M$ be a locally compact, separable, Hausdorff space with a pseudo-metric $d_M$, and let $N$ be a locally compact, separable metric space with metric $d_N$. 
A subfamily $\mathcal{F} \subseteq \mathcal{D}(M,N)$ is relatively compact in $\mathcal{C}(M,N)$ if and only if for every $x\in M$, the set $\brk{f(x): f\in \mathcal{F}}$ is relatively compact in $N$. 
\end{theorem}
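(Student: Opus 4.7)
The plan is to view this as an instance of the classical Arzel\`a--Ascoli theorem, in which the distance-decreasing condition defining $\mathcal{D}(M,N)$ supplies a built-in source of equicontinuity, rendering the usual equicontinuity hypothesis automatic. The forward direction is immediate: if $\mathcal{F}$ is relatively compact in $\mathcal{C}(M,N)$, then for each $x \in M$ the evaluation map $\mathrm{ev}_x \colon \mathcal{C}(M,N) \to N$, $f \mapsto f(x)$, is continuous in the compact-open topology, so $\brk{f(x) : f \in \mathcal{F}} = \mathrm{ev}_x(\mathcal{F})$ is relatively compact in $N$ as a continuous image of a relatively compact set.

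For the converse, I would argue by a standard diagonal-plus-equicontinuity extraction. Starting from an arbitrary sequence $(f_n) \subseteq \mathcal{F}$, first pick a countable dense subset $\brk{x_k}_{k \geq 1} \subseteq M$ using separability of $M$. The pointwise relative compactness hypothesis together with a diagonal argument then yields a subsequence $(f_{n_j})$ such that $(f_{n_j}(x_k))_j$ converges in $N$ for every $k$. To upgrade this to uniform convergence on a given compact $K \subseteq M$: for any $\varepsilon > 0$, cover $K$ by finitely many $(\varepsilon/3)$-balls with respect to $d_M$ around a finite collection $x_{k_1}, \ldots, x_{k_m}$ drawn from the dense family, choose $J$ so that $d_N(f_{n_j}(x_{k_i}), f_{n_l}(x_{k_i})) < \varepsilon/3$ for all $j, l \geq J$ and all $i$, and then apply the distance-decreasing inequality $d_N(f_{n_j}(x), f_{n_j}(x_{k_i})) \leq d_M(x, x_{k_i})$ (and similarly for $f_{n_l}$) together with the triangle inequality to obtain $d_N(f_{n_j}(x), f_{n_l}(x)) < \varepsilon$ uniformly in $x \in K$. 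Since the closure of the pointwise orbit in $N$ is compact and hence complete as a metric space, this uniform Cauchy condition forces uniform convergence on $K$ to a continuous limit $f$. Letting $K$ exhaust $M$ by an increasing sequence of compacts and passing to a further diagonal subsequence gives convergence in $\mathcal{C}(M,N)$; the distance-decreasing inequality passes to pointwise limits, so $f \in \mathcal{D}(M,N)$, and we conclude that $\mathcal{F}$ has compact closure in $\mathcal{C}(M,N)$.

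The step I expect to demand the most care is the covering argument, because $d_M$ is only a pseudo-metric on the underlying set of $M$ and is not a priori compatible with the topology of $M$. The argument tacitly requires that $d_M$ be continuous on $M \times M$, or at least that $x \mapsto d_M(x, x_0)$ is continuous and vanishes at $x_0$, so that small $d_M$-balls contain topological neighborhoods of their centers. In the applications of this theorem in Section \ref{sec:nonArchBarth} the relevant pseudo-metric (the $K$-Kobayashi pseudo-metric) will be shown to be continuous with respect to the Berkovich topology, so this compatibility is satisfied in practice; a fully general statement must either incorporate this continuity as a hypothesis or appeal directly to the formulation in \cite[Corollary 1.3.2]{Kobayashi}, where such compatibility is built into the setup.
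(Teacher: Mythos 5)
The paper does not supply its own proof of this statement; it is quoted verbatim (with attribution) from Kobayashi's book, so there is no internal argument to compare against. What you have written is the standard Arzel\`a--Ascoli argument in the distance-decreasing setting, and the outline is sound where it is sound; the more useful thing to say is that the worry you flag at the end is not a cosmetic one but a genuine gap in the statement as quoted.

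Concretely, without some compatibility hypothesis between $d_M$ and the topology of $M$, the equivalence fails. Take $M = N = [0,1]$ with the usual topology, let $d_N$ be the standard metric, and let $d_M$ be the discrete metric $d_M(x,y)=1$ for $x\neq y$. This is a bona fide (pseudo-)metric on the underlying set, and $M$ is locally compact and separable. Since $\operatorname{diam}(N)\leq 1$, every continuous $f\colon M\to N$ satisfies $d_N(f(x),f(y))\leq 1 = d_M(x,y)$, so $\mathcal{D}(M,N)=\mathcal{C}(M,N)=\mathcal{C}([0,1],[0,1])$. Each pointwise orbit $\{f(x): f\}$ is all of $[0,1]$, hence relatively compact, yet $\mathcal{C}([0,1],[0,1])$ in the compact-open (equivalently sup-norm) topology is not relatively compact. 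So the "if'' direction fails. Your covering step is exactly where this breaks: small $d_M$-balls around a point need not contain a topological neighbourhood of that point, so the distance-decreasing inequality does not translate into topological equicontinuity. The correct fix is precisely what you suggest---add the hypothesis that $d_M$ is continuous (or at least that $y\mapsto d_M(x,y)$ is upper semicontinuous with value $0$ at $y=x$), which restores the inclusion of a neighbourhood of $x$ inside each $d_M$-ball and makes the covering argument legitimate. This hypothesis does appear elsewhere in the paper (e.g., in \autoref{prop:innermetrictopology}) and is verified in the only place \autoref{thm:ArzelaAscoli} is actually invoked (the pseudo-metric $d_{C_K^{\an}}$ used in the proof of \autoref{xthm:main1} is arranged to be continuous via \autoref{lemma:pseudometricontinuous}), so the paper's use of the result is sound even though the quoted statement omits the hypothesis.

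One further small point: your final diagonalization over an exhaustion of $M$ by compacts tacitly uses $\sigma$-compactness of $M$, which does not follow from "locally compact and separable'' alone in full generality; again, this is harmless for the paper's application (Berkovich analytifications of projective varieties over a field with a countable dense subfield are metrizable, hence second countable, hence $\sigma$-compact), but is worth recording as part of the implicit hypotheses if one wants a self-contained statement.
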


\section{\bf Preliminaries on Berkovich $K$-analytic spaces}
\label{sec:backgroundBerkovich}
In this section, we recall background on Berkovich $K$-analytic spaces,  weakly analytic maps, and non-Archimedean Hilbert spaces. 
Recall our conventions established in Section \ref{sec:conventions}.  
We note that the material in this section holds when $K$ is an arbitrary algebraically closed, non-Archimedean, non-trivially valued field of characteristic zero. 

\subsection{Basic definitions}
To begin, we recall the types of Berkovich spaces we will be working with.  We refer the reader to Berkovich's originial texts \cite{BerkovichSpectral, BerkovichEtaleCohomology} and Temkin \cite{Temkin:IntroBerk} for the standard definitions of Berkovich spaces. 

\begin{definition}\label{defn:boundaryless}
A $K$-analytic space $\sX$ is called \cdef{good} if each point $x\in \sX$ admits an affinoid neighborhood $M(A)$, and $\sX$ is called \cdef{boundaryless} if each point $x\in \sX$ admits an affinoid neighborhood $M(A)$ such that $x$ lies in the interior of $M(A)$ i.e., $x\in \Int(M(A))$. 
A $K$-analytic space is called \cdef{proper} if $\sX$ is boundaryless and compact. 
\end{definition}

\begin{example}\label{exam:boundary}
For a scheme $X/K$ locally of finite type, we can associated to it the Berkovich analytification $X^{\an}$, which will always be a boundaryless $K$-analytic space. By \cite[Theorem 3.4.8]{BerkovichSpectral}, $X^{\an}$ is proper if and only if $X$ is proper. 
\end{example}

We will be interested in analytic morphisms between good $K$-analytic spaces.

\begin{definition}\label{defn:analyticmorphism}
Let $\sX$ and $\sY$ be good $K$-analytic spaces. 
A continuous map $f\colon \sX \to \sY$ is \cdef{$K$-analytic} if there exists an atlas $\brk{(\sU_i),\varphi_i}$ of $\sX$ and an atlas $\brk{(\sV_j,\psi_j)}$ of $\sY$ such that every pair of indices $i,j$, the composition $\psi_j\circ f\circ \varphi_i^{-1}\colon \varphi_i(\sU_i) \to \psi_j(\sV_j)$ satisfies the following property:~for every affinoid domain $\sW\subset \varphi_i(\sU_i)$ and $\sZ\subset \psi_j(\sV_j)$ with $\psi_j\circ f\circ \varphi_i^{-1}(\sW)$ contained in the topological interior of $\sZ$ in $\psi_j(\sV_j)$, the restriction of $\psi_j\circ f\circ \varphi_i^{-1}$ to $\sW$ is morphism of $K$-affinoid spaces i.e., is induced by a bounded morphism between underlying $K$-affinoid algebras. 
\end{definition}

\subsection{Local properties of Berkovich $K$-analytic spaces}
\label{sec:localproperties}
Next, we discuss local properties of boundaryless Berkovich spaces. 

Given a $K$-affinoid space $M(A)$, there exists a reduction map $\red_{M(A)}\colon M(A) \to \Spec(\wt{A})$ where $\wt{A} \coloneqq A^{\circ}/A^{\circ\circ}$ i.e., the power bounded elements of $A$ modulo the topologically nilpotent elements of $A$ (see \cite[Section 2.4]{BerkovichSpectral} for details). 
A standard fact is that the interior of $M(A)$ is isomorphic to the pre-image of the closed points of $\Spec(\wt{A})$ under the reduction map (see e.g., \cite[Sections 2.5 -- 2.6]{BerkovichSpectral}).   
A important type of $K$-analytic space will be those coming from pre-images of a single closed point.

\begin{definition}\label{defn:basictube}
A $K$-analytic space $\mathscr{U}$ is called a \cdef{basic tube} if there exists an equidimensional reduced strictly $K$-affinoid space $M(A)$ and a closed point $\wt{x}$ in the reduction of $M(A)$ such that $\sU \cong \red_{M(A)}^{-1}(\wt{x})$. 
\end{definition}

We record some useful properties of basic tubes. 

\begin{lemma}\label{lemma:equivalentbasictube}
Let $\mathscr{U}$ be a basic tube. 
\begin{enumerate}
\item $\mathscr{U}$ is a path-connected, boundaryless, $\sigma$-compact $K$-analytic space. 
\item A $K$-analytic space is a basic tube if and only if it is a connected component of the interior of some equi-dimensional, reduced strictly $K$-affinoid space.
\item $\mathscr{U}$ is exhausted by connected $K$-affinoids i.e., there exists an increasing sequence $\mathscr{U}_1 \subset\mathscr{U}_2\subset \cdots$ of path-connected $K$-affinoid spaces such that $\mathscr{U} = \bigcup \mathscr{U}_i$. 
\item There exists a strictly $K$-affinoid space $M(A)$ and a distinguished closed immersion into some closed polydisk $\iota\colon M(A) \to B^n(0,1)$ such that $\mathscr{U}$ is isomorphic to $\iota(M(A)) \cap B^n(0,1)^-$. 
\end{enumerate}
\end{lemma}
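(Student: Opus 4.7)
The plan is to deduce all four properties directly from the reduction map $\red_{M(A)}\colon M(A) \to \Spec(\wt A)$ and standard facts from \cite[Sections 2.4--2.6]{BerkovichSpectral}, since most of these statements are documented (in various forms) in the non-Archimedean literature, in particular in the work of Rodr\'iguez V\'azquez \cite{Vazquez_HyperbolicityNotions, Vazquez_NormalFamily}. I would treat (1) and (2) as structural consequences of the local geometry of the interior, and (3) and (4) as explicit exhaustion and embedding statements obtained from a choice of generators of the maximal ideal cutting out the closed point $\wt x$.

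For (1), I would recall that the reduction map is anti-continuous, surjective, and that for every closed point $\wt x \in \Spec(\wt A)$ the fiber $\red_{M(A)}^{-1}(\wt x)$ lies in $\Int(M(A))$ and is open in $M(A)$; this gives boundarylessness. Connectedness of the fiber of a closed point follows from the fact that closed points of the reduction correspond bijectively to connected components of the interior (a standard consequence of the Shilov boundary description together with the fact that $\wt A$ is a finitely generated $\wt K$-algebra with isolated closed points in its associated reduction). For $\sigma$-compactness, I would exhibit the exhaustion from (3), which immediately implies that $\sU$ is a countable union of compact sets.

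For (2), one direction is the definition. For the other, let $M(A)$ be an equi-dimensional, reduced strictly $K$-affinoid space and let $\sV$ be a connected component of $\Int(M(A))$. Since the interior equals the preimage of the set of closed points of $\Spec(\wt A)$, the connected components of $\Int(M(A))$ are exactly the preimages $\red_{M(A)}^{-1}(\wt x)$ for closed points $\wt x$, by the connectedness statement used in (1). Hence $\sV$ is a basic tube.

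For (3) and (4), I would fix a presentation of $\sU = \red_{M(A)}^{-1}(\wt x)$ and choose $f_1,\dots,f_n \in A^\circ$ whose reductions generate the maximal ideal $\mathfrak m_{\wt x} \subset \wt A$. These functions define a morphism $\varphi\colon M(A) \to B^n(0,1)$, and the preimage $\varphi^{-1}(B^n(0,1)^-)$ is exactly $\red_{M(A)}^{-1}(\wt x) = \sU$. Taking, for each $r \in |K^\times|$ with $r<1$ strictly, the strictly $K$-affinoid domain
\[
\sU_r \;=\; \{x \in M(A) : |f_i(x)| \leq r \text{ for all } i=1,\dots,n\},
\]
I would verify that $\sU_r$ is $K$-affinoid, that $\sU = \bigcup_{r\to 1^-} \sU_r$ is a nested union, and (after replacing by connected components) that one obtains the desired exhaustion $\sU_1 \subset \sU_2 \subset \cdots$. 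Choosing $r$ close enough to $1$ yields the strictly $K$-affinoid $\widehat\sU = \sU_r$ of (4), and the map $\widehat\sU \hookrightarrow B^n(0,1)$ given by $(f_1,\dots,f_n)$ is a closed immersion identifying $\sU$ with $\widehat\sU \cap B^n(0,1)^-$ inside $B^n(0,1)$.

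The only real subtlety is justifying that the fiber of a closed point of the reduction is connected, i.e. that closed points of $\Spec(\wt A)$ parametrize the connected components of $\Int(M(A))$; this is where the equi-dimensional/reduced hypothesis enters and is the main point where I would cite Berkovich's structure theory (or Rodr\'iguez V\'azquez's explicit treatment in \cite{Vazquez_HyperbolicityNotions}). Everything else is bookkeeping with the reduction map.
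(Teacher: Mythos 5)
The paper does not prove this lemma: it simply cites \cite[Section 2.5]{Vazquez_NormalFamily}, and your treatment of (1)--(3) is a reasonable expansion of what appears there, with the key connectedness input (Bosch's theorem that the tube of a closed point in the canonical reduction of a reduced strictly $K$-affinoid is connected) correctly flagged as the step you would cite rather than reprove.

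However, your proof of (4) has a genuine error. You set $\widehat{\sU} = \sU_r = \{\,x\in M(A): |f_i(x)|\leq r\,\}$ for some $r<1$ and claim that $(f_1,\dots,f_n)$ gives a closed immersion $\widehat{\sU}\hookrightarrow B^n(0,1)$ identifying $\sU$ with $\widehat{\sU}\cap B^n(0,1)^-$. This cannot work for two reasons. First, $\sU_r$ is already a proper subset of $\sU$ contained in the preimage of $B^n(0,1)^-$, so $\sU_r\cap B^n(0,1)^- = \sU_r \subsetneq \sU$; the claimed identification would prove $\sU$ is affinoid, which is false. Second, the $f_i$ are lifts of generators of $\mathfrak{m}_{\widetilde{x}}$ alone; they do not in general generate the affinoid algebra, so the map $(f_1,\dots,f_n)$ need not be a closed immersion. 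The correct statement of (4), as in Rodr\'iguez V\'azquez, takes $\widehat{\sU}$ to be the \emph{ambient} strictly $K$-affinoid $M(A)$ (or an equi-dimensional replacement of it), closed-immersed in a polydisk via a full distinguished presentation of $A$ whose coordinate functions include the $f_i$ and are normalized so that $\widetilde{x}$ sits over the origin; then $\sU$ is exactly $\widehat{\sU}\cap B^n(0,1)^-$. The point is that $\widehat{\sU}$ must contain $\sU$ and be cut out by the full presentation, not be a Weierstrass shrinking of $\sU$ cut out only by the local equations at $\widetilde{x}$.
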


\begin{proof}
These results are proved in \cite[Section 2.5]{Vazquez_NormalFamily}.  We also use \cite[Theorem 3.2.1]{BerkovichSpectral}, which proves that a connected good $K$-analytic space is path-connected. 
\end{proof}

As an immediate consequence of \autoref{defn:boundaryless} and \autoref{lemma:equivalentbasictube}.(2), we have the following. 

\begin{corollary}\label{coro:localbasictubes}
A reduced, boundaryless $K$-analytic space $\sX$ admits a basis of neighborhoods by basic tubes. 
Moreover,  every point $x\in \sX$ admits an path-connected affinoid neighborhood $\sV$ such that $x\in \Int(\sV)$. 
\end{corollary}

\subsection{The kernel and the reduction map of a constant $K$-analytic variety}
\label{subsec:reduction}
Next, we describe the reduction map in the setting of a constant $K$-analytic variety.

First, we recall that the underlying set of $X^{\an}_K$ is
\[
X_K^{\an} = \brk{x = (\xi_x,|\cdot|_x) : \xi_x \in X_K \text{ and } |\cdot|_x\colon \kappa(x)\to \mR}
\]
where $|\cdot|_x\colon \kappa(x)\to \mR$ is an absolute value on the residue field $\kappa(x)$ at $\xi_x$ that extends the absolute value on $K$. 
The field $\sH(x)$ is the completion of $\kappa(x)$ with respect to $|\cdot|_x$.

We also note that there is a \cdef{kernel map} $\iota_{X_K}\colon X_K^{\an} \to X_K$ which sends $x = (\xi_x,|\cdot|_x)$ to $\xi_x$.  

\begin{definition}\label{defn:birational}
The points $x = (\xi_x,|\cdot|_x)$ in $X_K^{\an}$ whose kernel $\iota_{X_K}(x)$ is the generic point of $X_K$ are called \cdef{birational points} of $X_K^{\an}$. 
These points correspond to valuations on the function field of $X_K$ extending the valuation on $K$, and the subspace of birational points on $X_K^{\an}$ is dense in $X_K^{\an}$. 
\end{definition}

Now, we arrive at our description of the reduction map. 
Let $R$ denote the valuation ring of $K$ and consider the base change of $X$ to $R$, which we denote by $X_R$. 
Since $X_R$ is a proper variety over $R$, $X_R$ is a proper $R$-model of $X_K$. 
By using the valuation criterion of properness to the $K$-morphism $\Spec(\sH(x)) \to X_R$, we have a natural reduction map
\[
\red_{X_K^{\an}}\colon X_K^{\an} \to X.
\]
For $x\in X_K^{\an}$, the reduction $\red_{X_K^{\an}}(x)$ is a specialization of the kernel map $\iota_{X_K}(x) \in X_K \subset X_R$. 
The map $\red_{X_K^{\an}}$ is surjective, anti-continuous, and a morphism from the $G$-topological site on $X_K^{\an}$ to the Zariski site of $X$ i.e., for any Zariski affine open $U\subset X$, $\red_{X_K^{\an}}^{-1}(U)$ is an affinoid domain of $X_K^{\an}$. 

Using the fact that $X_K^{\an}$ is the constant $K$-analytic variety associated to a variety (which by definition is irreducible), we may identify a particular point on $X_K^{\an}$ via the reduction map.

\begin{definition}\label{defn:Shilov}
The point $\eta$ in $X_K^{\an}$ corresponding to the unique pre-image of the generic point of $X$ under $\red_{X_K^{\an}}$ is the \cdef{Shilov boundary of $X_K^{\an}$}.  We note that $X_K^{\an}$ deformation retracts onto $\eta$. 
\end{definition}

\subsection{Weakly analytic maps}
\label{subsec:weaklyanalytic}
Recall that for $K$-analytic spaces, pointwise limits of $K$-analytic morphisms need not be $K$-analytic.  Indeed, for $K$-analytic spaces $\sX$ and $\sY$, any constant map $f\colon \sY\to \sX$, $f\equiv x \in \sX$ can be realized as the limit of constant $K$-analytic maps. 
However, the limit of constant $K$-analytic maps need not be $K$-analytic, and in fact, we have that $f$ is analytic if and only if $x \in \sX(K)$.  
This is very different from the complex analytic setting where for $X(\mC)$ and $Y(\mC)$ complex manifolds, the space of holomorphic maps $\text{Hol}(Y(\mC),X(\mC))$ is closed in $\cC(Y(\mC),X(\mC))$.

Despite not being analytic, continuous limits of analytic maps often become analytic after a suitable base change. 
To describe this,  let $\sX$ be a $K$-analytic space and for every complete extension $K'/K$, let $\pi_{K'/K}\colon \sX_{K'}\to \sX$ denote the base change morphism (see \cite[Section 1.4]{BerkovichEtaleCohomology}).  
Clearly, every $K$-point in $\sX$ defines a $K'$-point in $\sX_{K'}$, and since $K$ is algebraically closed, results of Poineau \cite[Section 3]{Poineau:AngelicBerkovich} show that there exists a unique \textit{continuous} extension $\sigma_{K'/K}\colon \sX \hookrightarrow \sX_{K'}$ which defines a section $\pi_{K'/K}$.  We note that the section $\sigma_{K'/K}$ is not a $K$-analytic morphism.

With this construction, we recall Rodr\'iguez V\'azquez's definition of weakly $K$-analytic maps and an important result relating these maps to pointwise limits of $K$-analytic morphisms. 

\begin{definition}[\protect{\cite[Definition 6.1]{Vazquez_NormalFamily}}]
\label{defn:weaklyanalytic}
Let $\sX$ and $\sY$ be $K$-analytic spaces, and let $f\colon \sX\to \sY$ be a continuous map. 
We say that $f$ is \cdef{weakly $K$-analytic} if for every point $x\in X$, there exists an affnoid neighborhood $\sZ$ of $x$, a complete field extension $K'/K$ and a $K'$-analytic map $F\colon \sZ_{K'}\to \sY_{K'}$ such that $f_{|\sZ} = \pi_{K'/K}\circ F \circ \sigma_{K'/K}.$
\end{definition}

\begin{prop}[\protect{\cite[Proposition 6.2]{Vazquez_NormalFamily}}]
\label{prop:locallimitweak}
Let $\sU$ be a basic tube and let $M(A)$ be a $K$-affinoid space. 
Let $f\colon \sU \to M(A)$ be a continuous map. The following are equivalent:
\begin{enumerate}
\item For any point $x\in \sU$, there exists an affinoid neighborhood $\sZ$ of $x$ and a sequence of $K$-analytic maps $f_n\colon \sZ \to M(A)$ pointwise converging to $f_{|\sZ}$.
\item For any point $x\in \sU$, there exists an affinoid neighborhood $\sZ$ of $x$, a complete extension $K'/K$ and a $K'$-analytic map $F\colon \sZ_{K'}\to M(A)_{K'}$ such that $f_{|\sZ} = \pi_{K'/K}\circ F \circ \sigma_{K'/K}$. 
\end{enumerate}
\end{prop}

Although we will not use \autoref{prop:locallimitweak}, we will need an ingredient in Rodr\'iguez V\'azquez's proof.  
Roughly speaking, the proofs of her main results follow from considering the situation where the basic tube $\sU$ is the open $r$-dimensional polydisk $B^r(0,1)^-$ and $M(A)$ is the closed $s$-dimensional polydisk $B^s(0,1)$. 
She constructs an infinite dimensional space $\Mor(B^r(0,1)^-,B^s(0,1))$ that parametrizes analytic maps from $B^r(0,1)^-$ to $B^s(0,1)$.  
We record several facts about this space.

\begin{theorem}[\protect{\cite[Definition 4.3, Proposition 4.4, Proposition 4.6, Theorem 4.9]{Vazquez_NormalFamily}}]\label{thm:Morfacts}
For positive integers $r$ and $s$, we have the following.
\begin{enumerate}
\item $\Mor(B^r(0,1)^-,B^s(0,1))$ is the analytic spectrum of a Banach $K$-algebra $\cT_{\infty}^{r,s}$, and hence is compact. 
\item $\Mor(B^r(0,1)^-,B^s(0,1))$ is homeomorphic an inverse limit of closed unit polydisks. 
\item The set of rigid points in $\Mor(B^r(0,1)^-,B^s(0,1))$ is dense. 
\item $\Mor(B^r(0,1)^-,B^s(0,1))$ is a Fr\'echet--Urysohn space. 
\item Every sequence of rigid points in $\Mor(B^r(0,1)^-,B^s(0,1))$ admits a convergent sub-sequence.  
\end{enumerate}
\end{theorem}

To conclude our discussion on Rodr\'iguez V\'azquez's work, we briefly expand on \autoref{thm:Morfacts}.(2) and describe base change morphisms for $\Mor(B^r(0,1)^-,B^s(0,1))$. 
In \cite[Proposition 4.4]{Vazquez_NormalFamily},Rodr\'iguez V\'azquez constructs a homeomorphism
\[
\Mor(B^r(0,1)^-,B^s(0,1)) \cong \varprojlim_{\delta \in \mN} B^{s(\delta + 1)^r}(0,1)
\]
where $B^{s(\delta + 1)^r}(0,1)$ is the $s(\delta + 1)^r$-dimensional closed unit polydisk over $K$. 
If $K'/K$ is a complete field extension, \cite[p.~1704]{Vazquez_NormalFamily} tells us that we have a base change morphism
\[
\pi_{K'/K}\colon \Mor(B^r(0,1)^-_{K'},B^s(0,1)_{K'}) \cong \Mor(B^r(0,1)^-,B^s(0,1))_{K'} \to \Mor(B^r(0,1)^-,B^s(0,1)). 
\]
Here $\Mor(B^r(0,1)^-_{K'},B^s(0,1)_{K'})$ can be identified with the analytic spectrum of the completed tensor product $\cT_{\infty}^{r,s} \widehat{\otimes}_K K'$. Moreover, the same argument from \cite[Proposition 4.4]{Vazquez_NormalFamily} allows us to homeomorphically identify
\[
\Mor(B^r(0,1)^-_{K'},B^s(0,1)_{K'}) \cong \varprojlim_{\delta \in \mN} B^{s(\delta + 1)^r}(0,1)_{K'}
\]
where $B^{s(\delta + 1)^r}(0,1)_{K'}$ is the $s(\delta + 1)^r$-dimensional closed unit polydisk over $K'$. 
We note that the above inverse limits have the same indexing set and for each $\delta\in \mN$, we have the section map $\sigma_{K'/K}\colon B^{s(\delta + 1)^r}(0,1) \hookrightarrow B^{s(\delta + 1)^r}(0,1)_{K'}$.  By the universal property of inverse limits, we have an injective continuous map 
\begin{equation}\label{eqn:infinitesection}
\sigma_{K'/K,\infty}\colon \Mor(B^r(0,1)^-,B^s(0,1)) \hookrightarrow \Mor(B^r(0,1)^-_{K'},B^s(0,1)_{K'}) \cong \Mor(B^r(0,1)^-,B^s(0,1))_{K'}. 
\end{equation}

\subsection{Non-Archimedean Hilbert schemes}
\label{subsec:NonArchimedeanHilbert}
To conclude our preliminaries on Berkovich spaces, we recall results on Hom-spaces in non-Archimedean geometry.

In \cite[Section 4.1]{conradGAGA}, Conrad defines a rigid analytic variant of the Quot and Hilbert functors, and in \textit{loc.~cit.~}Theorem 4.1.3, he proved that the Hilbert functor for a projective rigid analytic space over $\Sp(K)$ is representable by a separated rigid analytic space over $\Sp(K)$ and that its formation is compatible with analytification. 
Following the scheme theoretic argument, Conrad also defined the rigid analytic Hom-functor and proved (see \textit{loc.~cit.}~Corollary 4.1.5) that the Hom-functor between two projective rigid analytic spaces over $\Sp(K)$ is representable by a separated rigid analytic space over $\Sp(K)$. 

Using comparison results for rigid analytic and Berkovich analytic spaces (see e.g., \cite[Section 8.3]{huber2}),  we have that $\underline{\Hilb}_{X_K^{\an}}$ exists as a strict boundaryless Hausdorff Berkovich $K$-analytic space and $\underline{\Hilb}_{X_K}^{\an} \cong \underline{\Hilb^{\an}}_{X_K^{\an}}$. Similarly, we can define the Berkovich $K$-analytic Hom-space $\underline{\Hom^{\an}}_K(\sX,\sY)$ where $\sX$ and $\sY$ are projective Berkovich $K$-analytic spaces, and for projective varieties $X$ and $Y$ over $K$,  we have that $\underline{\Hom}_K(X,Y)^{\an} \cong \underline{\Hom^{\an}}_K(X^{\an},Y^{\an})$. 

Next, we describe properties concerning the topology of these Berkovich $K$-analytic Hom-spaces.

\begin{lemma}\label{lemma:Kpointscompactopen}
Let $X,Y$ be  projective varieties over $K$. 
The subspace topology on $\underline{\Hom}_K(X,Y)^{\an}(K)$ coincides with compact open topology on the space of $K$-analytic morphisms $X^{\an}\to Y^{\an}$. 
\end{lemma}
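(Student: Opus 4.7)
The plan is to exhibit the natural set-theoretic bijection between $\underline{\Hom}_K(X,Y)^{\an}(K)$ and the set of $K$-analytic morphisms $X^{\an}\to Y^{\an}$ as a homeomorphism, by showing that each of the two topologies refines the other. The natural bijection exists because, since $X$ is projective, GAGA identifies the $K$-analytic morphisms $X^{\an}\to Y^{\an}$ with the $K$-morphisms $X\to Y$, i.e., with $\underline{\Hom}_K(X,Y)(K)$. The existence of the analytic Hom-space (discussed earlier in the section) furthermore provides a universal evaluation morphism
\[
\mathrm{ev}\colon \underline{\Hom}_K(X,Y)^{\an}\times X^{\an}\to Y^{\an},
\]
whose restriction to $\{\varphi\}\times X^{\an}$ recovers the analytic morphism $f_\varphi$ associated to the $K$-point $\varphi$.

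For the first direction, that the subspace topology is at least as fine as the compact-open topology, I would fix a sub-basic open set $W(C,V)$ of the compact-open topology, with $C\subset X^{\an}$ compact and $V\subset Y^{\an}$ open. For any $\varphi$ in the preimage of $W(C,V)$, the slice $\{\varphi\}\times C$ lies in the open set $\mathrm{ev}^{-1}(V)$; since $X^{\an}$ is compact (as $X$ is projective), the classical tube lemma produces an open neighborhood $U$ of $\varphi$ in $\underline{\Hom}_K(X,Y)^{\an}$ with $U\times C\subset \mathrm{ev}^{-1}(V)$. Restricting to $K$-points shows that $W(C,V)$ is open in the subspace topology.

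For the converse direction, I would fix a $K$-point $\varphi$ and a strictly $K$-affinoid neighborhood $M(A)\subset \underline{\Hom}_K(X,Y)^{\an}$ of $\varphi$; near $\varphi$, the subspace topology is generated by basic opens of the form $\{\psi\in M(A)(K):|g(\psi)-g(\varphi)|<\varepsilon\}$ for $g\in A$ and $\varepsilon>0$. The task is to realize each such basic open as the restriction of a finite intersection of sub-basic compact-open sets $W(C_i,V_i)$. To do this, I would exploit the construction of the Hom-space as a locally closed subspace of a product of Hilbert schemes of projective spaces: after fixing an embedding $Y\hookrightarrow \mathbb{P}^N$ via a very ample line bundle and a finite affinoid cover $\{X_i\}$ of $X^{\an}$ on which the pullback of $\mathcal{O}(1)$ is trivialized, a set of generators of $A$ can be expressed as polynomial functions in the coefficients describing $f_\psi|_{X_i}$. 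Uniform closeness of $f_\psi$ to $f_\varphi$ on the compacts $X_i$, which is precisely what the compact-open topology controls, then bounds the values $|g(\psi)-g(\varphi)|$.

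The main obstacle is this second direction: translating abstract affinoid coordinates on $\underline{\Hom}_K(X,Y)^{\an}$ near a point into uniformly controllable evaluation data for the associated analytic morphism. Once both directions are in place, the identity map between the two topologies is a homeomorphism, proving the lemma.
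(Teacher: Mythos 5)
Your proposal takes a genuinely different route from the paper's. The paper's proof appeals to Grothendieck's complex-analytic construction of the Hom-scheme, uses the Hrushovski--Loeser--Poonen metrizability of $X^{\an}$ and $Y^{\an}$ to replace compact-open by uniform convergence, and then reduces via the universal property of $\mathbb{P}^n_K$ to the topology of the analytified Picard scheme. You instead split the claim into two inclusions, handling one by a tube-lemma argument from the evaluation morphism and the other by tracking the affinoid coordinate data near a point of the Hom-space against the coefficients describing a morphism to projective space. The tube-lemma direction is a cleaner, more self-contained argument than the paper's appeal to Grothendieck, and the second direction is comparable in scope to the paper's own sketch (which ends with ``one may conclude''); both proofs leave the hard direction at the level of a sketch.

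One technical point your tube-lemma argument glosses over: the evaluation map $\mathrm{ev}\colon \underline{\Hom}_K(X,Y)^{\an}\times_K X^{\an}\to Y^{\an}$ is continuous for the \emph{Berkovich fiber product} topology, which is in general strictly finer than the topological product topology, so the classical tube lemma does not apply directly to $\mathrm{ev}^{-1}(V)$. The fix is that you only ever consider slices $\{\varphi\}\times C$ with $\varphi$ a $K$-point: one can check (for strictly $K$-affinoid $M(A)$, $M(B)$, say) that the Berkovich and topological product topologies agree on the subset $M(A)(K)\times M(B)\subset M(A\widehat\otimes_K B)$, since on $K$-points of the first factor the seminorms generating the tensor-product topology are jointly continuous in the product topology. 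With that observation your tube-lemma step goes through; it would be worth recording it explicitly, since the discrepancy between the two product topologies is precisely the kind of non-Archimedean subtlety that does not arise in the complex-analytic case this lemma is modelled on.
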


\begin{proof}
We note that the construction of the Berkovich analytic Hilbert scheme from \cite[Section 4.1]{conradGAGA} is identical to that of Grothendieck's construction of Hilbert scheme in the complex-analytic category \cite{Grothendieck:Hilb}, and one can show that Grothendieck's Hom-scheme equips set of holomorphic maps with compact-open topology. 
We note that the same strategy works in the Berkovich analytic category, which we sketch below. 
First, we may reduce to the case where $Y = \mP^n_K$ by using the valuative criterion for properness and noting that that a projective embedding on $Y$ induces a closed immersion on Hom-schemes.  
Using the universal property of $\mP^n_K$, one reduces to studying the topology of the Berkovich analytification of the Picard scheme of $X$, and by considering this, one may conclude. 
\end{proof}

\section{\bf Metrizability of connected $K$-affinoids} 
\label{sec:metrizability}

In this section,  we show $K$-affinoid spaces are metrizable when $K $ is an algebraically closed, complete, non-Archimedean non-trivially valued field $K$ of characteristic zero which admits countably dense subset. Furthermore, we describe a metric on a connected $K$-affinoid which defines the $K$-analytic topology and admits minimizing geodesics.


\begin{lemma}\label{lemma:metricaffinoid}
For $K $ an algebraically closed, complete, non-Archimedean non-trivially valued field $K$ of characteristic zero which admits countably dense subset, any $K$-affinoid space $M(A)$ is metrizable. 
\end{lemma}

\begin{proof}
This follows from Urysohn's metrization theorem and the assumptions on $K$. For an explicit construction of a metric, we refer the reader to \cite[Section 34]{Escassut_UltrametricBanach}. 
\end{proof}

\begin{remark}\label{rem:choices}
We note that there is not a unique metric coming from \autoref{lemma:metricaffinoid}. 
Indeed, the metric from \cite[Section 34]{Escassut_UltrametricBanach} depends on a choice of countable dense subset of $K$ and a choice of approximating polynomials for the underlying $K$-Tate algebra. 
\end{remark}

Using basic topological properties of $K$-affinoids, we can find a metric on a $K$-affinoid space which defines the topology and admits minimizing geodesics. 

\begin{lemma}\label{coro:basictubemetrizable}
For $K $ an algebraically closed, complete, non-Archimedean non-trivially valued field $K$ of characteristic zero which admits countably dense subset, any connected $K$-affinoid space $M(A)$ admits a metric $\rho_{M(A)}$ for which $(M(A),\rho_{M(A)})$ is a metric space and any pair of distinct points in $M$ can be joined by a minimizing geodesic with respect to $\rho_{M(A)}$. 
\end{lemma}

\begin{proof}
By \autoref{lemma:metricaffinoid} and the assumptions on $K$, there exists a metric $\rho'$ on $M(A)$ such that $(M(A),\rho')$ is a metric space. 
By \cite[Theorem 1.2.1,  Corollary 2.2.8, and Theorem 3.2.1]{BerkovichSpectral}, the metric space $(M(A),\rho')$ satisfies the conditions of \autoref{thm:affinoidsgeodesic}, and so there exists an inner metric $\rho$ on $M(A)$ for which $(M(A),\rho_{M(A)})$ is a metric space and any pair of distinct points in $M$ can be joined by a minimizing geodesic with respect to $\rho_{M(A)}$.  
\end{proof}


\subsection{Making choices}\label{subsec:makingchoices}
In light of \autoref{rem:choices}, we will make, once and for all, various choices involved in the metrizability of a connected  $K$-affinoid $M(A)$. 
In particular,  we will always assume that $(M(A),\rho)$ is an inner metric space for which any pair of distinct points in $M$ can be joined by a minimizing geodesic with respect to $\rho_{M(A)}$.  
We note that each choice will produce a different metric on $M(A)$,  but the \textit{metrizability} of $M(A)$ is independent of all of these choices.

\section{\bf A non-Archimedean Kobayashi pseudo-metric for constant $K$-analytic varieties}
\label{sec:pseudo}
In this section, we define our pseudo-metric for constant $K$-analytic varieties.

\subsection{Motivation for definition}
Before we begin, we give some motivation for our definition. 
On a connected complex space, the classical Kobayashi pseudo-metric is defined as the infimum of the Poincar\'e metric over all sets of open unit disks connecting distinct points. 
From our perspective, there are four key points in this definition:
\begin{enumerate}
\item the choice of test objects (the open unit disk), 
\item the choice of metric on the test objects (the Poincar\'e metric), 
\item the choice of morphisms from the test objects (holomorphic maps), and 
\item the choice of chains connecting distinct points. 
\end{enumerate}
In this setting, most of these choices are the natural ones; this statement should be thought of as a commentary on the simplicity of the local structure of a complex space.  

In the non-Archimedean setting,  each of the aspects mentioned above requires care due to the complexity of the local structure of a $K$-analytic space.  We refer the reader to \autoref{rem:choiceinvolved} for further discussion on this topic. 

For our test objects, we will use connected $K$-affinoids on constant $K$-analytic varieties. 
Using connected $K$-affinoids  will allow us to define the pseudo-metric on \textit{every} point of $X_K^{\an}$. We pause to remind the reader that $X_K^{\an}$ contains the $K$-points of $X_K$ and additional points corresponding to valuations on residue fields of non-closed points on $X_K$.  
As for the metric on a connected $K$-affinoid, we will use the inner metric described in \autoref{coro:basictubemetrizable}.

Roughly speaking, the types of morphisms we consider from connected $K$-affinoids to $X_K^{\an}$ are $K$-analytic morphisms which are pointed at birational points (\autoref{defn:birational}) and can be extended to a global $K$-analytic map. 
We define and study these morphisms in Subsection \ref{subsec:morhpisms}.

The final input is the choice of chains connecting distinct points in $X_K^{\an}$, and for this construction, we take motivation from work of Demailly, Lempert, and Shiffman \cite{DemaillyLempertShiffman:AlgebraicApproximations}. In this work, the authors showed that one can use algebraic approximation to instead define the classical Kobayashi pseudo-metric on a complex quasi-projective variety via consideration of chains of algebraic curves which are not necessarily smooth or irreducible.

First, we will first define a pseudo-metric on a constant $K$-analytic variety using the above mentioned connected $K$-affinoid and morphisms and the natural definition of chains connecting distinct points in $X_K^{\an}$ (i.e., the one similar to that of the classical Kobayashi pseudo-metric) . 
Then, we will define another pseudo-metric, the $K$-Kobayashi pseudo-metric, on a constant $K$-analytic variety by considering the previously mentioned pseudo-metric and chains of closed integral constant subvarieties of $X_K^{\an}$ connecting two distinct points. 
The reason for doing this is that it allows us to ``see'' all of the subvarieties of $X_K^{\an}$, which allows us to detect when $X_K^{\an}$ admits a morphism from non-hyperbolic varieties.

We will breakdown this section corresponding to three ingredients of our pseudo-metric:~test objects, morphisms, and chains.  

\subsection{Test objects}
As mentioned above, we will use connected $K$-affinoids on constant $K$-analytic varieties as our test objects. We refer the reader to \autoref{defn:constantKanalyticvariety} for the definition of constant $K$-analytic variety. 

\begin{definition}\label{defn:tubesonepsace}
For a constant $K$-analytic variety $X_K^{\an}$, we denote the set of connected $K$-affinoids  on $X_K^{\an}$ by $\mU_{X_K^{\an}}$ i.e., 
\[
\mU_{X_K^{\an}} := \brk{\text{connected $K$-affinoids $M(A)$:~there exists a closed immersion $M(A) \hookrightarrow X_K^{\an}$}}.
\]
\end{definition}

Next, we define a collection of connected $K$-affinoids which will form our test objects for the pseudo-metric. 

\begin{definition}\label{defn:permissible}
Let $X_K^{\an}$ be a constant $K$-analytic variety. 
A \cdef{permissible cover of $X_K^{\an}$} consists of the following data:
\begin{enumerate}
\item a finite collection of connected $K$-affinoids $\sV_1,\dots,\sV_n$ in $\mU_{X_K^{\an}}$ such that $\bigcup_{i=1}^n \Int(\sU_i)= X_K^{\an}$, and 
\item a finite (possibly empty) collection of constant $K$-analytic varieties $Z_{0,K}^{\an},\dots,Z_{\ell,K}^{\an}$ and a finite collection of $K$-affinoids $\brk{\sV_{i,j}}$ in $\mU_{Z_{i,K}^{\an}}$. 
\end{enumerate}
We will denote a permissible cover of $X_K^{\an}$ by 
\[
\underline{\sV} \coloneqq \brk{\underline{\sU}_{X_K^{\an}},\underline{\sU}_{Z_{0,K}^{\an}},\dots,\underline{\sU}_{Z_{\ell,K}^{\an}}}
\] 
where $\underline{\sU}_{\star}$ refers to a finite collection of connected $K$-affinoids on the constant $K$-analytic variety referenced in the subscript, and sometimes we will simply denote it by $\brk{\sU_1,\dots,\sU_m}$ where each of the $\sU_j$ is a connected $K$-affinoid in either $X_K^{\an}$ or $Z_{i,K}^{\an}$ for some $i$. 
\end{definition}

We will define our pseudo-metric using $K$-affinoids from a permissible cover as test objects. 

\begin{remark}\label{remark:basictubecover}
Note that for $X_K^{\an}$ a constant $K$-analytic variety,  one may always find a finite collection of connected $K$-affinoids $\sV_1,\dots,\sV_n$ in $\mU_{X_K^{\an}}$ such that $\bigcup_{i=1}^n \Int(\sU_i)= X_K^{\an}$. 
Indeed,  by \autoref{coro:localbasictubes},  every point $x\in X_K^{\an}$ admits connected $K$-affinoid neighborhood where $x$ is contained in the interior. 
Using these open sets as an open cover of $X_K^{\an}$, compactness of $X_K^{\an}$ tells us that there exists a finite collection of connected $K$-affinoids as described above. 
\end{remark}

\begin{remark}
At first glance, the second piece of data in \autoref{defn:permissible} may seem unnatural. 
The inclusion of additional connected $K$-affinoids on the constant $K$-analytic varieties $Z_{0,K}^{\an},\dots,Z_{\ell,K}^{\an}$ is crucial if one wants to prove a distance decreasing property for the eventual pseudo-metric.  
We note that the distance decreasing property for the classical Kobayashi pseudo-metric is straightforward to see via definitions. We prove a distance decreasing property for our pseudo-metric in \autoref{lemma:distancedecreasing} and the utility of this extra data already appears in Section \ref{sec:vanishing}. 
\end{remark}

\begin{remark}\label{rem:compareCherry}
The choice of test objects signifies the first departure of our pseudo-metric from the Kobayashi pseudo-metric for complex manifolds and from Cherry's extended pseudo-metric on $K$-analytic spaces. 
In \cite{CherryKoba}, Cherry defines a non-Archimedean variant of the Kobayashi pseudo-metric on the rigid points of a $K$-analytic space using the closed unit disk as the test object. 
Note that the closed unit disk is an example of a connected $K$-affinoid, and for a smooth projective $K$-analytic curve, every rigid point admits an open neighborhood isomorphic to the open unit disk. 
Cherry's pseudo-metric is actually an extended pseudo-metric i.e., it may take on the value $\infty$. 
This phenomena occurs when there does not exist a chain of closed unit disks connecting two rigid points. 
As an example, if $\sX$ is a $K$-analytic space whose special fiber $\wt{\sX}$ does not contain any rational curves, then the Cherry extended pseudo-metric between points $x,y \in \sX(K)$ with distinct reductions $\wt{x}\neq\wt{y}$ will be infinite as there is no chain of closed unit disks connecting them (cf.~\cite[Remark p.~133]{CherryKoba}). 
The reason for choosing a larger class of test objects, which depends on the $K$-analytic space $\sX$, is that we need to have test objects which cover $\sX$.  
\end{remark}

\subsection{Morphisms}
\label{subsec:morhpisms}
It is natural to simply consider $K$-analytic morphisms from connected $K$-affinoid to $X_K^{\an}$ for the pseudo-metric. 
However, we want to restrict the class of $K$-analytic morphisms to those which are pointed and roughly speaking can be globally extended to $K$-analytic morphisms. 

\begin{definition}
Let $X_K^{\an}$ and $Y_K^{\an}$ be constant $K$-analytic varieties, and let $\sV \in \mU_{Y_K^{\an}}$ be a connected $K$-affinoid on which contains the point $y\in Y_K^{\an}$. 
\begin{enumerate}
\item Let $x\in X_K^{\an}$. We say that a $K$-analytic morphism $f\colon \sV \to X_K^{\an}$ is \cdef{pointed with respect to $y$ and $x$} if $f(y) = x$.  We will denote this by $f\colon (\sV,y) \to (X_K^{\an},x)$. 
\item A pointed $K$-analytic morphism $f\colon (\sV,y) \to (X_K^{\an},x)$ is \cdef{algebraically extendable} if there exists a pointed morphism of $K$-varieties 
\[
h\colon (\overline{\iota_{Y_K}(\sU)},\iota_{Y_K}(y)) \to (X_K,\iota_{X_K}(x))
\] 
such that ${h^{\an}_{}}_{\vert(\sV,y)} = f$ where $\iota_{Y_K}$ (resp.~$\iota_{X_K}$) is the kernel map of the constant $K$-variety $Y_K^{\an}$ (resp.~$X_K^{\an}$) from Section \ref{subsec:reduction}. 
\end{enumerate}
For $x\in X_K^{\an}$, we will denote the set of algebraically extendable $K$-analytic morphism which are pointed with respect to $y$ and $x$ by $\REMor((\sV,y),(X_K^{\an},x))$. 
\end{definition}

We use pointed algebraically extendable $K$-analytic morphisms in our $K$-Kobayashi pseudo-metric. 

%

\subsection{Chains connecting distinct points}
In this section, we define the notion of a chain of connected $K$-affinoids connecting distinct points on $X_K^{\an}$. 
We emphasize that our pseudo-metric is defined on all of the points of $X_K^{\an}$ and not just the rigid points of $X_K^{\an}$. 

First, we define a labeling of a permissible covering, which is essentially an assignment of points in $X_K^{\an}$ for which we base out pointed algebraically extendable morphisms at. 
We remind the reader of the notion of birational points (\autoref{defn:birational}) and Shilov boundary of $X_K^{\an}$ (\autoref{defn:Shilov}), which will be used below.

\begin{definition}\label{defn:labeling}
Let $X_K^{\an}$ be a constant $K$-analytic variety. 
Let $\{\underline{\sU}_{X_K^{\an}},\underline{\sU}_{Z_{0,K}^{\an}},\dots,\underline{\sU}_{Z_{\ell,K}^{\an}}\}$ be a permissible cover of $X_K^{\an}$. 
Write this permissible cover as $\brk{\sV_1,\dots,\sV_n,\sV_{n+1},\dots,\sV_m}$ where $\sV_1,\dots,\sV_n$ are in $\mU_{X_K^{\an}}$. 

A \cdef{labeling} of $\brk{\sV_1,\dots,\sV_n,\sV_{n+1},\dots,\sV_m}$ is the following data:
\begin{enumerate}
\item for each $i = 1,\dots, m$, a birational point $z_i \in \Int(\sV_i)$ (we note that the birational points are dense in a constant $K$-analytic variety so one always exists),  and
\item a collection of points $x_1,\dots,x_m$
\end{enumerate}
such that for each $i=1,\dots,n$,  $z_i = x_i$ and if the Shilov boundary $\eta$ of $X_K^{\an}$ is contained in $\Int(\sV_i)$, then $z_i = x_i = \eta$. 
We will denote the data of a permissible covering and labeling by 
\[
\brk{(\sV_1,z_1),\dots,(\sV_m,z_m));x_1,\dots,x_m}.
\]
\end{definition}

\begin{definition}\label{defn:pointedcover}
Let $X_K^{\an}$ be a constant $K$-analytic variety,  let $\brk{(\sV_1,z_1),\dots,(\sV_m,z_m));x_1,\dots,x_m}$ be a permissible cover and labeling of $X_K^{\an}$, and let $I = \brk{1,\dots,m}$. 
\begin{enumerate}
\item For any subset $J = \brk{j_1,\dots,j_k}\subseteq I$ where $1\leq k \leq m$, we say that a collection of $K$-analytic morphisms $g_{j_1}\colon \sU_{j_1}\to X_K^{\an},\dots,g_{j_k}\colon \sU_{j_k}\to X_K^{\an}$ is \cdef{pointed with respect to the labeling} if $g_{j_i}(z_{j_i}) = x_{j_i}$ for each $j_i \in J$. 
\item  For any subset $J = \brk{j_1,\dots,j_k}\subseteq I$ where $1\leq k \leq m$, we say that a collection of $K$-analytic morphisms $g_{j_1}\colon \sU_{j_1}\to X_K^{\an},\dots,g_{j_k}\colon \sU_{j_k}\to X_K^{\an}$ \cdef{covers $X_K^{\an}$} if $\bigcup_{i=1}^kg_{j_i}(\sU_{j_i}) = X_K^{\an}$.
\item We say that a permissible cover and labeling $\brk{(\sV_1,z_1),\dots,(\sV_m,z_m));x_1,\dots,x_m}$ of $X_K^{\an}$  \cdef{can cover $X_K^{\an}$} if there exists  a collection of algebraically extendable $K$-analytic morphisms which are pointed with respect to the labeling and covers $X_K^{\an}$. 
\end{enumerate}
\end{definition}

First, we prove that any permissible cover and labeling covers $X_K^{\an}$. 

\begin{lemma}\label{lemma:labelsexist}
Let $X_K^{\an}$ be a constant $K$-analytic variety. 
Any permissible cover and labeling of $X_K^{\an}$ can cover $X_K^{\an}$.  
\end{lemma}

\begin{proof}
Write the permissible cover as $\brk{\sV_1,\dots,\sV_n,\sV_{n+1},\dots,\sV_m}$ where $\sV_1,\dots,\sV_n$ are in $\mU_{X_K^{\an}}$ and form a finite cover of $X_K^{\an}$. 
We recall that the birational points are dense on a constant $K$-analytic variety, and hence for each $i = 1,\dots, m$,  there exists a birational point $z_i \in \Int(\sU_i)$. 
The result follows from the definition of permissible cover and the fact that for each $i = 1,\dots,n$, the natural closed immersion of $(\sV_i,z_i)\hookrightarrow (X_K^{\an},x_i)$ is an algebraically extendable $K$-analytic morphism.  
Indeed, one may take $h$ to be the identity map on $X_K$ in \autoref{defn:permissible}. 
\end{proof}

Next, we define chains connecting two points of a constant $K$-analytic space.

\begin{definition}\label{defn:chain}
Fix a permissible cover and labeling $\brk{(\sV_1,z_1),\dots,(\sV_m,z_m));x_1,\dots,x_m}$ of $X_K^{\an}$. 
Let $I = \brk{1,\dots,m}$, and let $x,y\in X_K^{\an}$ be distinct points. 

For any subset $J = \brk{j_1,\dots,j_k}\subseteq I$ where $1\leq k \leq m$,  let $p_{j_i}, q_{j_i}\in \sU_{j_i}$ be points, and let $f_{j_i} \in \REMor((\sU_{j_i},z_{j_i}),(X_K^{\an},x_{j_i}))$ be algebraically extendable $K$-analytic morphisms which are pointed with respect to the labeling such that
\[x = f_{j_1}(p_{j_1}), f_{j_2}(p_{j_2})= f_{j_1}(q_{j_1}), f_{j_3}(p_{j_3}) = f_{j_2}(q_{j_2}), \ldots, f_{j_k}(p_{j_k}) = f_{j_{k-1}}(q_{j_{k-1}}), f_{j_k}(q_{j_k}) = y.\] 
We define the data 
\[
\alpha({x,y}) \coloneqq \brk{(\sV_{j_1},z_{j_1}),\dots,(\sV_{j_k},z_{j_k}),p_{j_1},\dots,p_{j_k},q_{j_1},\dots,q_{j_k},f_{j_1},\dots,f_{j_k}}
\]
to be a \cdef{chain connecting $x$ to $y$ with links $\brk{(\sV_{j_1},z_{j_1})\dots,(\sV_{j_k},z_{j_k}); x_{j_1},\dots,x_{j_m}}$}. 

The \cdef{length} of a chain connecting $x$ to $y$ is defined to be
\[
\ell(\alpha({x,y})) \coloneqq \sum_{i=1}^k \rho_{{\sV_{j_i}}}(p_{j_i},q_{j_i}). 
\]
where $\rho_{{\sV_{j_i}}}$ is the metric\footnote{We note that we have already made choices involved in this metric in Subsection \ref{subsec:makingchoices}.} from \autoref{coro:basictubemetrizable} on connected $K$-affinoid $\sV_{j_i}$. 
\end{definition}

\begin{remark}
We choose a subset $J = \brk{j_1,\dots,j_k}$ of $I$ for two reasons.  First, we do not want to fix an ordering of the connected $K$-affinoids,  and second, we need to have the possibility of chains which do not contain all of connected $K$-affinoids in the permissible cover $\brk{(\sV_1,z_1)\dots,(\sV_m,z_m)}$. In other words, we allow for the flexibility of excluding some connected $K$-affinoids from some chains.  
\end{remark}

\begin{lemma}
Let $x,y\in X_K^{\an}$ be distinct points. 
For any permissible cover and labeling,  there exists a chain connecting $x$ to $y$. 
\end{lemma}

\begin{proof}
Since $X_K^{\an}$ is path-connected,  this follows from the definition of labeling, \autoref{lemma:labelsexist}, and the chain characterization of connectedness. 
\end{proof}

With this setup, we can construct a pseudo-metric with respect to a fixed permissible cover and labeling. 

\begin{definition}\label{defn:pseudometric}
Fix a permissible cover and labeling $\underline{\sV} = \brk{(\sV_1,z_1)\dots,(\sV_m,z_m);x_1,\dots,x_m}$ of $X_K^{\an}$. 
Let $x,y\in X_K^{\an}$ be distinct points. 
The \cdef{pseudo-metric with respect to $\underline{\sV}$} is defined to be
\[
\rho_{X_K^{\an},\underline{\sV}}(x,y) = \inf_{\alpha({x,y})} \ell(\alpha({x,y}))
\]
where the infimum runs over all chains connecting $x$ to $y$ with links contained in $\underline{\sV}$. 
It is clear that $\rho_{X_K^{\an},\underline{\sV}}$ is symmetric and satisfies the triangle inequality. 
\end{definition}

We now define the non-Archimedean Kobayashi pseudo-metric on the $K$-analytic space $X_K^{\an}$. 

\begin{definition}\label{defn:KKobayashipseudometric}
Let $(Z_{i,K}^{\an})_{i\in \mN}$ denote the countably many constant $K$-analytic subvarieties of $X_K^{\an}$, and let $\underline{\sV}_{Z_{i,K}^{\an}} =\brk{(\sV_{i,1},z_{i,1})\dots,(\sV_{i,m_i},z_{i,m_i}); x_{i,1},\dots,x_{i,m_i}}$ denote a permissible cover and labeling on $Z_{i,K}^{\an}$.  Let $x,y\in X_K^{\an}$ be distinct points.

We define the \cdef{$K$-Kobayashi pseudo-metric with respect to $\underline{\underline{\sV}} = (\underline{\sV}_{Z_{i,K}^{\an}})_{i\in \mN}$} as
\[
d_{X_K^{\an},\underline{\underline{\sV}}}(x,y) = \inf \pwr{\sum_{i=0}^{n-1} \rho_{Z_{i,K}^{\an},\underline{\sV}_{Z_{i,K}^{\an}}} (p_{i},p_{i+1})} 
\]
where the infimum is taken over chains of points $x = p_0,p_1,\dots,p_n = y$ and constant $K$-analytic subvarieties $Z_{0,K}^{\an},\dots,Z_{n,K}^{\an}$ such that 
\[
p_0 \in Z_{0,K}^{\an},p_1 \in Z_{0,K}^{\an}\cap Z_{1,K}^{\an},\dots,p_{n-1} \in Z_{{n-1},K}^{\an}\cap Z_{n,K}^{\an},p_n \in Z_{n,K}^{\an}
\]
and where $\rho_{Z_{i,K}^{\an},\underline{\sV}_{Z_{i,K}^{\an}}}$ is the pseudo-metric with respect to $\underline{\sV}_{Z_{i,K}^{\an}}$. 
Again, it is clear that $d_{X_K^{\an},\uusV}$ is symmetric and satisfies the triangle inequality. 
\end{definition}

\begin{remark}
When $X_K^{\an}$ is a $1$-dimensional constant $K$-analytic variety,  the pseudo-metric $d_{X_K^{\an},\uusV}$ coincides with the pseudo-metric $\rho_{X_K^{\an},\underline{\sV}}$ where $\underline{\sV}$ is a permissible cover and labeling of $X_K^{\an}$. 
\end{remark}

\begin{lemma}\label{lemma:pseudometricontinuous1}
For any permissible cover and labeling $\underline{\sV} = \brk{(\sV_{1},z_{1})\dots,(\sV_{m},z_{m});x_{1},\dots,x_{m}}$ of $X_K^{\an}$, the pseudo-metric with respect to $\underline{\sV}$
\[
\rho_{X_K^{\an},\underline{\sV}}\colon X_K^{\an} \times X_K^{\an} \to \mR_{\geq 0}
\] 
is continuous.
\end{lemma}

\begin{proof}
By the triangle inequality, it suffices to check that if $y_j \to y$ in $X_K^{\an}$, then $\rho_{X_K^{\an},\underline{\sV}}(y_j,y) \to 0$.
Suppose that $\sV_1,\dots,\sV_n$ form a finite cover of $X_K^{\an}$, and without lose of generality, suppose that $y \in \sV_1$.  
Let $f_1\colon (\sV_1,z_1) \hookrightarrow (X_K^{\an},x_1)$ denote the pointed closed immersion of $\sV_1$ into $X_K^{\an}$, and recall that this morphism is algebraically extendable. 
For $j\gg 0$, let $z_j \in \sV_1$ be the point such that $f_1(z_j) = y_j$ where $y_0 = y$.  In particular, $z_j = y_j$. 
Then the sequence $z_j$ tends to $z_0$ and hence as $j\to \infty$, we have that $\rho_{\sV_1}(z_j,z) \to 0$ because $\sV_1$ is metrizable with respect to $\rho_{\sV_1}$ by \autoref{coro:basictubemetrizable}. 
Moreover, we have that $\rho_{X_K^{\an},\underline{\sV}} (y_j,y)$ tends to 0 as $j$ tends to $\infty$, and hence we have our desired result.
\end{proof}

\begin{corollary}\label{lemma:pseudometricontinuous}
For each constant $K$-analytic subvariety $Z_{i,K}^{\an}$ of $X_K^{\an}$, fix a permissible cover and  labeling 
\[
\underline{\sV}_{Z_{i,K}^{\an}} = \brk{((\sV_{i,1},z_{i,1})\dots,(\sV_{i,m_i},z_{i,m_i});x_{i,1},\dots,x_{i,m_i}}
\] 
of $Z_{i,K}^{\an}$. 
Then, the $K$-Kobayashi pseudo-metric with respect to $\uusV = (\underline{\sV}_{Z_{i,K}^{\an}})_{i\in \mN}$
\[
d_{X_K^{\an},\uusV}\colon X_K^{\an} \times X_K^{\an} \to \mR_{\geq 0}
\] 
is continuous.
\end{corollary}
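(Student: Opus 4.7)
The plan is to bootstrap the corollary directly from \autoref{lemma:pseudometricontinuous1}. First, for each $i \in \mN$, I apply that lemma to the constant $K$-analytic variety $Z_{i,K}^{\an}$ together with the given permissible open cover $\brk{(\sU_{i,1},z_{i,1}),\dots,(\sU_{i,m},z_{i,m})}$ to produce a labeling
\[
\underline{\sU}_{Z_{i,K}^{\an}} = \brk{(\sU_{i,1},z_{i,1}),\dots,(\sU_{i,m},z_{i,m});x_{i,1},\dots,x_{i,m},f_{i,1},\dots,f_{i,m}}
\]
for which $\rho_{Z_{i,K}^{\an},\underline{\sU}_{Z_{i,K}^{\an}}}\colon Z_{i,K}^{\an}\times Z_{i,K}^{\an}\to \mR_{\geq 0}$ is continuous. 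These are exactly the labelings appearing in \autoref{defn:KKobayashipseudometric}, so the resulting $d_{X_K^{\an}}$ is defined unambiguously.

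Next, since $d_{X_K^{\an}}$ is symmetric and satisfies the triangle inequality (by the final remark of \autoref{defn:KKobayashipseudometric}), the inequality $|d_{X_K^{\an}}(x,y)-d_{X_K^{\an}}(x',y')|\leq d_{X_K^{\an}}(x,x')+d_{X_K^{\an}}(y,y')$ reduces continuity to the following: whenever $y_j\to y$ in $X_K^{\an}$, we have $d_{X_K^{\an}}(y_j,y)\to 0$. The key observation, which I would isolate as the main (though essentially notational) step, is that $X$ is itself an integral closed subvariety of $X$, so $X_K^{\an}$ appears in the enumeration $(Z_{i,K}^{\an})_{i\in \mN}$, say as $Z_{0,K}^{\an}$. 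In particular, for any distinct $x,y\in X_K^{\an}$, the length-one chain with single link $Z_{0,K}=X_K$ and endpoints $p_0=x$, $p_1=y$ is admissible in \autoref{defn:KKobayashipseudometric}, giving
\[
d_{X_K^{\an}}(x,y) \;\leq\; \rho_{X_K^{\an},\underline{\sU}_{X_K^{\an}}}(x,y).
\]

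Applying this bound to the pair $(y_j,y)$ and using that $\rho_{X_K^{\an},\underline{\sU}_{X_K^{\an}}}$ is continuous on $X_K^{\an}\times X_K^{\an}$ with $\rho_{X_K^{\an},\underline{\sU}_{X_K^{\an}}}(y,y)=0$, I conclude that $d_{X_K^{\an}}(y_j,y)\to 0$, proving continuity. The only real work was done in \autoref{lemma:pseudometricontinuous1}; the corollary then amounts to noting that $X_K^{\an}$ itself provides a legitimate single-link chain in the definition of $d_{X_K^{\an}}$, so the smaller (more refined) pseudo-metric inherits continuity from the larger one.
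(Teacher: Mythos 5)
Your proposal is correct and follows essentially the same route as the paper's own (one-line) proof: apply the preceding lemma to each $Z_{i,K}^{\an}$, then observe that the single-link chain through $X_K^{\an}$ yields $d_{X_K^{\an}} \leq \rho_{X_K^{\an},\underline{\sU}_{X_K^{\an}}}$, from which continuity follows. You have merely unpacked the triangle-inequality reduction and the chain argument that the paper leaves implicit.
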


\begin{proof}
This follows immediately from \autoref{lemma:pseudometricontinuous1} since $\rho_{X_K^{\an},\underline{\sU}} \geq d_{X_K^{\an},\uusV}$. 
\end{proof}

\begin{remark}
We note that \autoref{lemma:pseudometricontinuous1} and \autoref{lemma:pseudometricontinuous} tell us that the pseudo-metrics $\rho$ and $d$ are continuous for any choices of permissible cover and labelings. 
\end{remark}

\subsection{A non-Archimedean analogue of pseudo-Kobayashi hyperbolicity}
To conclude this section, we define a notion of hyperbolicity related to the $K$-Kobayashi pseudo-metric. 

\begin{definition}\label{defn:Leahyper}
We say that $X_K^{\an}$ is \cdef{$K$-Lea hyperbolic} (or:~\cdef{$K$-analytically Kobayashi hyperbolic}) if the $K$-Kobayashi pseudo-metric $d_{X_K^{\an},\uusV}$ is a metric with respect to every choice $\uusV$ of permissible cover and labeling on every constant $K$-analytic subvariety. 
\end{definition}

We can easily define the ``pseudoification'' of $K$-Lea hyperbolic. 

\begin{definition}\label{defn:Leahypermodulo}
Let $\Delta \subseteq X_K^{\an}$ be a closed $K$-analytic subspace. 
We say that $X_K^{\an}$ is \cdef{$K$-Lea hyperbolic modulo $\Delta$} (or:~\cdef{$K$-analytically Kobayashi hyperbolic modulo $\Delta$})  if the $K$-Kobayashi pseudo-metric $d_{X_K^{\an},\uusV}$ is a metric modulo $\Delta$ (i.e., $d_{X_K^{\an},\uusV}(x,y) > 0$ unless $x = y$ or $x,y \in \Delta$) with respect to every choice $\uusV$ of permissible cover and labeling on every constant $K$-analytic subvariety. 
\end{definition}

\begin{definition}\label{defn:pseudoLeahyper}
We say that $X_K^{\an}$ is \cdef{pseudo $K$-Lea hyperbolic} (or:~\cdef{pseudo $K$-analytically Kobayashi hyperbolic}) if there exists a proper closed $K$-analytic subspace $\Delta$ of $X_K^{\an}$ such that $X_K^{\an}$ is {$K$-Lea hyperbolic modulo $\Delta$} (or:~{$K$-analytically Kobayashi hyperbolic modulo $\Delta$}).
\end{definition}

\begin{remark}
Clearly, $X_K^{\an}$ is $K$-Lea hyperbolic if and only if $X_K^{\an}$ is $K$-Lea hyperbolic modulo $\emptyset$. 
\end{remark}

\section{\bf Proof of \autoref{xthm:LeaimpliesBarthintro}}
\label{sec:nonArchBarth}
In this section, we prove a non-Archimedean variant of Barth's theorem \cite{Barth:Standard}, which states that for a Kobayashi hyperbolic complex manifold, the Kobayashi metric defines the complex analytic topology. 
We remind the reader of the conventions established in Section \ref{sec:conventions}. 

The main result of this section is the following.

\begin{theorem}[= \autoref{xthm:LeaimpliesBarthintro}]\label{thm:LeaHyperbolicBarth}
Let $X_K^{\an}$ denote a constant $K$-analytic variety. 
If $X_K^{\an}$ is $K$-Lea hyperbolic (\autoref{defn:Leahyper}), then for any choice $\uusV$ of permissible cover and labeling on every constant $K$-analytic subvariety of $X_K^{\an}$, the $K$-Kobayashi metric $d_{X_K^{\an},\uusV}$ defines the Berkovich $K$-analytic topology on $X_K^{\an}$. 
\end{theorem}

Since $X_K^{\an}$ is Hausdorff and locally compact, \autoref{prop:innermetrictopology} tells us that it suffices to show that the $K$-Kobayashi pseudo-metric $d_{X_K^{\an},\uusV}$ is inner (\autoref{defn:innerpseudo}).  

First, we define the tread of a chain. 

\begin{definition}\label{defn:thread}
Let $Z_K^{\an}$ be a constant $K$-analytic subvariety of $X_K^{\an}$, and let 
\[
\underline{\sV} = \brk{(\sV_1,z_1),\dots,(\sV_m,z_m));x_1,\dots,x_m}\] 
be a permissible cover and labeling  of $Z_K^{\an}$. 
Given $x,y\in Z_K^{\an}$, let $\alpha({x,y})$ be a chain connecting $x$ to $y$ with links $\brk{(\sV_{j_1},z_{j_1})\dots,(\sV_{j_k},z_{j_k}); x_{j_1},\dots,x_{j_k}}$. 
For each $\sV_{j_i}$,  \autoref{coro:basictubemetrizable} tells us there exists a minimizing geodesic $\gamma_{j_i}$ between $p_{j_i}$ to $q_{j_i}$ in $\sV_{j_i}$.
Joining $f_{j_1}(\gamma_{j_1}),\dots , f_{j_k}(\gamma_{j_k})$ consecutively, we obtain a path from $x$ to $y$ in $Z_K^{\an}$.

We call this path \cdef{the thread of the chain $\alpha({x,y})$} and denote it by $|\alpha({x,y})|$. 
Clearly, $|\alpha({x,y})|$ is rectifiable and its length $L(|\alpha({x,y})|)$ with respect to $\rho_{Z_K^{\an},\underline{\sV}}$ is bounded by $\ell(\alpha({x,y}))$.
\end{definition}

\begin{lemma}\label{lemma:rhoinner}
Let $X_K^{\an}$ denote a constant $K$-analytic variety.  For any permissible cover and labeling $\underline{\sV} = \brk{(\sU_1,z_1)\dots,(\sU_m,z_m);x_1,\dots,x_m}$ of $X_K^{\an}$,the pseudo-metric $\rho_{X_K^{\an},\underline{\sV}}$ is inner. 
\end{lemma}

\begin{proof}
Let $\rho_{X_K^{\an},\underline{\sV}}^i$ be the inner pseudo-metric induced by $\rho_{X_K^{\an},\underline{\sV}}$. 
Since the inequality $\rho_{X_K^{\an},\underline{\sV}} \leq \rho_{X_K^{\an},\underline{\sV}}^i$ always holds, it suffices to prove $\rho_{X_K^{\an},\underline{\sV}}^i \leq \rho_{X_K^{\an},\underline{\sV}}$. 
For any pair of distinct points $x,y \in X_K^{\an}$, we have 
\[
\rho_{X_K^{\an},\underline{\sV}}^i(x,y) \leq \inf_{\alpha(x,y))} L(|\alpha({x,y})|)\leq \inf_{\alpha({x,y})} \ell(\alpha({x,y})) = \rho_{X_K^{\an},\underline{\sV}}(x,y).
\]
where $\alpha({x,y})$ is a chain connecting $x$ to $y$ and $|\alpha({x,y})|$ is the thread of the chain $\alpha({x,y})$ as defined in \autoref{defn:thread}.   
\end{proof}

\begin{lemma}\label{lemma:Berkseparable}
Any projective constant $K$-analytic variety $X_K^{\an}$ is (locally) compact and separable. 
\end{lemma}

\begin{proof}
Since projectivity is stable under base change, we have that $X_K$ is projective, and so \cite[Theorem 3.4.8]{BerkovichSpectral} tells us that $X_K^{\an}$ is Hausdorff and compact hence locally compact. 
Since $X_K^{\an}$ is compact and metrizable \cite[Theorem 1.1 and Remark 1.5]{HruskovskiLoeserPoonen:BerkovichEmbed}, we have that $X_K^{\an}$ is separable. 
\end{proof}

\begin{lemma}\label{remk:rhodefinesanalytic}
Let $X_K^{\an}$ denote a constant $K$-analytic variety, and let $\underline{\sV} = \brk{(\sU_1,z_1)\dots,(\sU_m,z_m);x_1,\dots,x_m}$ be a permissible cover and labeling of $X_K^{\an}$. If the pseudo-metric $\rho_{X_K^{\an},\underline{\sV}}$ is a metric, then it defines the Berkovich $K$-analytic topology and any pair of distinct points in $X_K^{\an}$ can be joined by a minimizing geodesic with respect to $\rho_{X_K^{\an},\underline{\sV}}$. 
\end{lemma}

\begin{proof}
By \autoref{lemma:pseudometricontinuous1} and \autoref{lemma:rhoinner}, $\rho_{X_K^{\an},\underline{\sV}}$ is a continuous inner pseudo-metric. 
Since $X_K^{\an}$ is locally compact by \autoref{lemma:Berkseparable}, \autoref{prop:innermetrictopology} tells us that $\rho_{X_K^{\an},\underline{\sV}}$ defines the Berkovich $K$-analytic topology. 
Moreover, since $X_K^{\an}$ is also compact (\autoref{lemma:Berkseparable}), the Hopf--Rinow theorem \cite[Theorem 2.5.3]{BuragoBuragoIvanov:CourseMetric} tells us that any pair of distinct points in $X_K^{\an}$ can be joined by a minimizing geodesic with respect to $\rho_{X_K^{\an},\underline{\sV}}$. 
\end{proof}

\begin{prop}\label{prop:innerpseudometric}
Let $X_K^{\an}$ denote a constant $K$-analytic variety. 
If $X_K^{\an}$ is $K$-Lea hyperbolic, then for any choice $\uusV$ of permissible cover and labeling on every closed constant $K$-analytic subvariety of $X_K^{\an}$,  the $K$-Kobayashi metric $d_{X_K^{\an},\uusV}$ is an inner metric. 
\end{prop}

\begin{proof}
Let $d_{X_K^{\an},\uusV}^i$ be the inner pseudo-metric induced by $d_{X_K^{\an}\uusV}$. 
Since the inequality $d_{X_K^{\an},\uusV} \leq d_{X_K^{\an},\uusV}^i$ always holds, it suffices to prove $d_{X_K^{\an},\uusV}^i \leq d_{X_K^{\an},\uusV}$. 
Since $X_K^{\an}$ is $K$-Lea hyperbolic, we know that $d_{X_K^{\an},\uusV}$ is a metric for any choice $\uusV$ of permissible cover and labeling on every constant $K$-analytic subvariety of $X_K^{\an}$. 
In particular,  for any constant $K$-analytic subvariety $Z_K^{\an}$ of $X_K^{\an}$ and any permissible cover and labeling $\underline{\sV}$,the pseudo-metric $\rho_{Z_{K}^{\an},\underline{\sV}}$ is an inner metric (\autoref{lemma:rhoinner}) and any pair of distinct points in $Z_K^{\an}$ can be joined by a minimizing geodesic with respect to $\rho_{Z_K^{\an},\underline{\sV}}$ (\autoref{remk:rhodefinesanalytic}).

Recall that for any pair of distinct points $x,y \in X_K^{\an}$,
\[
d_{X_K^{\an}\uusV}(x,y) = \inf \pwr{\sum_{j=0}^{n-1} \rho_{Z_{j,K}^{\an},\underline{\sV}_j}(p_{j},p_{j+1})} 
\]
where the infimum above is the one from \autoref{defn:KKobayashipseudometric}. 
Using the existence of a minimizing geodesic with respect to $\rho_{Z_K^{\an},\underline{\sV}}$, one can define an analogous notion of a thread (\autoref{defn:thread}) with respect to a chain of constant $K$-analytic subvarieties, and then the same proof as in \autoref{lemma:rhoinner} implies that 
\[
d_{X_K^{\an},\uusV}^i \leq  d_{X_K^{\an},\uusV}.
\]
\end{proof}

\begin{proof}[Proof of \autoref{thm:LeaHyperbolicBarth}]
By \autoref{lemma:pseudometricontinuous}, $d_{X_K^{\an}}$ is continuous, and since $X_K^{\an}$ is locally compact and Hausdorff, the result follows from \autoref{prop:innermetrictopology} and \autoref{prop:innerpseudometric}. 
\end{proof}


\section{\bf Proof of \autoref{xthm:LeaimpliesBrody}}
\label{sec:vanishing}
In this section, we prove a vanishing theorem for the $K$-Kobayashi pseudo-metric on constant $K$-analytic varieties which admit a polarized dynamical system. This result will allow us to prove \autoref{xthm:LeaimpliesBrody}.

First, we recall the definition of a polarized dynamical system and periodic points of a polarized dynamical system.

\begin{definition}
Let $X/L$ be a projective variety. 
A \cdef{polarized dynamical system} on $X$ consists of the following data:
\begin{enumerate}
\item $f\colon X\to X$ an endomorphism of infinite order,
\item an ample line bundle $\mathcal{L}$ on $X$ such that $f^*\mathcal{L} \cong \mathcal{L}^{\otimes q}$ for some $q>1$. 
\end{enumerate}
\end{definition}

\begin{example}\label{exam:polarizeddynamical}
We give two examples of varieties admitting polarized dynamical systems. 
\begin{enumerate}
\item For $n\geq 1$, $\mP^n_L$ admits a polarized dynamical system coming from any degree $d$ morphism $f\colon \mP^n_L\to \mP^n_L$. Indeed, $f^*\sO(1)\cong \sO(d)$. 
\item Any abelian variety $A/L$ admits a polarized dynamical system coming from the multiplication-by-$n$ map $[n]\colon A\to A$ and taking $\mathcal{L}$ to be a symmetric, ample line bundle on $A$. Indeed, in this case, we have that $[n]^*\mathcal{L} \cong \mathcal{L}^{\otimes n^2}$, which can be proved using the theorem of the cube.
\end{enumerate}
\end{example}

We record three facts concerning polarized dynamical systems.

\begin{lemma}\label{lemma:propertiespolarization}
Let $X/L$ be a projective variety.  
\begin{enumerate}
\item If $X$ admits a polarized dynamical system with $f\colon X\to X$ being the endomorphism of infinite order, then $f$ is finite and $\deg(f) = q^{\dim(X)}$. 
\item If $X$ admits a polarized dynamical system, then $\kappa(X)\leq 0$. 
\item Let $L'/L$ be an extension, and let $X_L$ be the base change of $X$ to $L'$.
If $X$ admits a polarized dynamical system, then $X_L$ admits a polarized dynamical system. 
\end{enumerate}
\end{lemma}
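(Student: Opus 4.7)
The three parts are of quite different character: (1) is an intersection-theoretic computation, (3) is essentially formal from base change, and (2) is the substantive dynamical statement. I would therefore dispatch them in the order (1), (3), (2), reserving most of the effort for (2).

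For (1), the plan is to argue by contraction. If $f$ failed to be quasi-finite, some positive-dimensional subvariety would be contracted to a point, hence some irreducible curve $C\subset X$ would satisfy $\deg_C f^{*}\mathcal{L}=0$. But $f^{*}\mathcal{L}\cong \mathcal{L}^{\otimes q}$ with $q>1$ forces $\deg_C f^{*}\mathcal{L}=q\deg_C \mathcal{L}>0$, since $\mathcal{L}$ is ample, a contradiction. Thus $f$ has zero-dimensional fibers and, being a morphism of projective varieties, it is proper and quasi-finite, hence finite. The degree then drops out of the projection formula combined with Nakai--Moishezon:
\[
\deg(f)\cdot \mathcal{L}^{\dim X} \;=\; (f^{*}\mathcal{L})^{\dim X} \;=\; (\mathcal{L}^{\otimes q})^{\dim X} \;=\; q^{\dim X}\cdot \mathcal{L}^{\dim X},
\]
and $\mathcal{L}^{\dim X}>0$ lets us divide.

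For (3), I would observe that each of the relevant structures is stable under field extension. The base change $f_{L'}\colon X_{L'}\to X_{L'}$ is an endomorphism of the projective $L'$-variety $X_{L'}$, the pullback line bundle $\mathcal{L}_{L'}$ remains ample (ampleness persists under arbitrary field extension), and the isomorphism $f_{L'}^{*}\mathcal{L}_{L'}\cong \mathcal{L}_{L'}^{\otimes q}$ is obtained by base change from its counterpart on $X$. The only non-trivial point is that $f_{L'}$ retains infinite order; this follows because the line bundles $(f_{L'}^{n})^{*}\mathcal{L}_{L'}\cong \mathcal{L}_{L'}^{\otimes q^{n}}$ are pairwise non-isomorphic in $\operatorname{Pic}(X_{L'})$ for distinct $n$.

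For (2), which is the main obstacle, the plan is to exploit the ramification formula. By (1), $f$ is finite and surjective, so there is an effective ramification divisor $R$ with $f^{*}K_X = K_X + R$; iterating,
\[
(f^{n})^{*}K_X \;=\; K_X + R_n, \qquad R_n \;=\; \sum_{i=0}^{n-1}(f^{i})^{*}R \;\geq\; 0.
\]
Assuming for contradiction that $\kappa(X)\geq 1$, the strategy is to compare growth rates: on the one hand, pullback by $f^{n}$ injects $H^{0}(X,mK_X)$ into $H^{0}(X,mK_X+mR_n)$, so the Iitaka dimension of $K_X+R_n$ is at least $\kappa(X)\geq 1$; on the other hand, $(f^{n})^{*}K_X$ intersects the top power of $\mathcal{L}$ via $(f^{n})^{*}K_X\cdot \mathcal{L}^{\dim X-1}$ in a way constrained by $\deg(f^{n})=q^{n\dim X}$ and $(f^{n})^{*}\mathcal{L}=\mathcal{L}^{\otimes q^{n}}$. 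One then either extracts a direct volume contradiction when $\kappa(X)=\dim X$ and reduces to that case via the Iitaka fibration, using that $f$ descends to an endomorphism of the Iitaka base by functoriality of the canonical ring, or appeals to the well-known fact (due to Fakhruddin and to Nakayama--Zhang) that a smooth projective variety admitting a polarized endomorphism of infinite order has Kodaira dimension $\leq 0$. The technical heart is precisely this ramification--growth comparison, and the key delicate point is verifying that the Iitaka fibration is equivariant (up to replacing $f$ by an iterate) so that the induction on dimension goes through.
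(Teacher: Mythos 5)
Your parts (1) and (3) are correct and complete. The paper simply cites the literature for (1) (Zhang, Lemma 1.1.1), so your intersection-theoretic argument for finiteness and degree via the projection formula is a worthwhile self-contained alternative. For (3) your reasoning matches the paper's, which invokes compatibility of $\operatorname{Pic}$, ampleness, and finiteness with base change; your explicit check that $f_{L'}$ has infinite order via $\mathcal{L}_{L'}^{\otimes(q^n-1)}\not\cong\mathcal{O}_{X_{L'}}$ is the right way to make that step explicit.

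For (2), your sketch has a sign error that undermines the displayed argument: the Riemann--Hurwitz/ramification formula for a finite separable self-map $f\colon X\to X$ of a smooth variety reads $K_X = f^{*}K_X + R$ with $R\geq 0$ effective, i.e., $f^{*}K_X = K_X - R$, not $K_X + R$. Iterating therefore gives $(f^{n})^{*}K_X = K_X - R_n$ with $R_n\geq 0$, and the pullback map on sections goes $H^{0}(X,mK_X)\hookrightarrow H^{0}(X,mK_X - mR_n)\subseteq H^{0}(X,mK_X)$, which yields no growth and hence no contradiction in the form you describe. (With the correct sign one does easily get $K_X\cdot\mathcal{L}^{\dim X - 1}\leq 0$ and hence $\kappa(X)<\dim X$, but pushing that to $\kappa(X)\leq 0$ requires the genuinely nontrivial Iitaka-fibration/equivariance analysis, which is exactly the content of the results you name.) This is not fatal to your proposal, because your fallback --- citing Fakhruddin and Nakayama--Zhang --- is precisely what the paper does: the paper's proof of (2) is a one-line reference to Fakhruddin, Theorem 4.2. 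So your proposal and the paper's proof coincide on (2) at the level of what is actually proved; just fix the sign in the ramification formula if you keep the sketch, and note that the growth argument as stated does not go through.
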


\begin{proof}
The first and second facts are well-known (see e.g., \cite[Lemma 1.1.1]{Zhang:DistributionsAlgebraicDynamics} and \cite[Theorem 4.2]{Fakhruddin:Questions}).
The third assertion follows from injectivity of the Picard group under base change \cite[\href{https://stacks.math.columbia.edu/tag/0CDY}{Tag 0CDY}]{stacks-project} and the facts that the base change of an ample line bundle is ample \cite[\href{https://stacks.math.columbia.edu/tag/0893}{Tag 0893}]{stacks-project} and that the base change of a finite moprhism is again finite \cite[\href{https://stacks.math.columbia.edu/tag/01WL}{Tag 01WL}]{stacks-project}. 
\end{proof}

We will be interested in points which are fixed by some power of a polarized dynamical system. 

\begin{definition}
Let $X/L$ be a projective variety admitting a polarized dynamical system $(f,\mathcal{L})$. 
A point $x\in X(L)$ is \cdef{preperiodic for $f$} if $f^{\circ n}(x) = f^{\circ m}(x) $ for some $m>n\geq 0$,  it is called \cdef{periodic for $f$} if we may take $n = 0$. We denote the preperiodic points (resp.~periodic points) of $f$ by $X_{\text{preper}(f)}$ (resp.~$X_{\text{per}(f)}$). 
\end{definition}

The main result of this section is that the $K$-Kobayashi pseudo-metric on a constant $K$-analytic variety associated to a projective variety admitting a polarized dynamical vanishes on the periodic points the pullback of the polarized dynamical system.

\begin{theorem}\label{thm:vanishingdynamical}
Let $X/k$ be a projective variety admitting a polarized dynamical system $(f,\mathcal{L})$,  let $X_K$ be the base change of $X$ to $K$, and let $(f_K,\mathcal{L}_K)$ denote the base change of the polarized dynamical system $(f,\mathcal{L})$ to $X_K$. 
For every choice $\underline{\sV}$ of permissible cover and labeling $X_K^{\an}$, $\rho_{X_K^{\an},\underline{\sV}}(x,y) = 0$ for any pair of periodic points $x,y$ for $(f_K,\mathcal{L}_K)$.  
In particular, $X_K^{\an}$ is not $K$-Lea hyperbolic. 
\end{theorem}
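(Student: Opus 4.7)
I would prove the theorem by combining a distance-decreasing property of $f_K$ for $d_{X_K^{\an}}$ with a Berkovich-topological density of preimages coming from non-Archimedean equidistribution for polarized dynamical systems.

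First, I would set things up by replacing $f$ by $f^{\circ N}$ for $N$ a common multiple of the periods of $x$ and $y$; this allows me to assume $f_K(x) = x$ and $f_K(y) = y$. The pair $(f^{\circ N},\cL)$ remains a polarized dynamical system with exponent $q^N > 1$, and by Lemma~\ref{lemma:propertiespolarization}(1), $f_K$ is finite surjective of degree $q^n$ with $n = \dim X$. The key structural observation, extending Examples~\ref{example:constantmap} and~\ref{example:constantendo}, is that for any reducible algebraic tube $(\sU, z) \in \mU_{X_K^{\an}}$ and any permissible pointed morphism of good reduction $g\colon (\sU, z) \to (X_K^{\an}, w)$, the composition $f_K \circ g$ is again a permissible pointed morphism of good reduction $(\sU, z) \to (X_K^{\an}, f_K(w))$, since $f_K$ is the base change of a $k$-morphism and hence automatically respects reductions.

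Next, I would use this observation to establish the distance-decreasing inequality
\[
d_{X_K^{\an}}(x, y) \leq d_{X_K^{\an}}(\tilde x, y)
\]
for every preimage $\tilde x \in f_K^{-m}(x)$ and every $m \geq 1$. Given a chain realizing $d_{X_K^{\an}}(\tilde x, y) + \varepsilon$ with respect to a fixed choice of permissible covers and labelings, post-composing each of its permissible morphisms with $f_K^{\circ m}$ produces a chain of the same Escassut length connecting $f_K^{\circ m}(\tilde x) = x$ to $f_K^{\circ m}(y) = y$, after a simultaneous pushforward of labelings by $f_K^{\circ m}$. A careful bookkeeping argument, using that the Escassut distances $\rho_{(\overline{k(t)},\cP),\sU_i}(p_i,q_i)$ depend only on the basic tubes and the intermediate points (not on the morphisms), yields the stated inequality. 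Then, by non-Archimedean equidistribution of preimages for polarized dynamical systems after Chambert-Loir and Yuan--Zhang, the probability measures $\frac{1}{q^{mn}}\sum_{\tilde x \in f_K^{-m}(x)} \delta_{\tilde x}$ converge weakly to the canonical measure $\mu_f$ on $X_K^{\an}$, which has full Berkovich-topological support. This produces a sequence $\tilde x_m \in f_K^{-m}(x)$ converging to $y$ in the Berkovich topology, and continuity of $d_{X_K^{\an}}$ (Corollary~\ref{lemma:pseudometricontinuous}) gives $d_{X_K^{\an}}(\tilde x_m, y) \to d_{X_K^{\an}}(y, y) = 0$; combined with the inequality above, this forces $d_{X_K^{\an}}(x, y) = 0$ for every choice of permissible cover and labeling.

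Finally, for the ``in particular'' statement, I note that an infinite-order polarized endomorphism of degree $q^n > 1$ has infinitely many periodic points (by Fakhruddin's Zariski density result for preperiodic points, or the $\ell$-adic Lefschetz trace formula), so distinct periodic $x \neq y$ exist and the preceding gives $d_{X_K^{\an}}(x, y) = 0$; hence $d_{X_K^{\an}}$ is not a metric and $X_K^{\an}$ fails to be $K$-Lea hyperbolic. The hardest part will be the Berkovich-topological convergence $\tilde x_m \to y$: Zariski density of preperiodic points is not fine enough, as the Berkovich topology is strictly finer than the Zariski topology, so the genuinely non-Archimedean equidistribution statement is essential. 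A secondary technical difficulty lies in bookkeeping how labelings pushforward under $f_K^{\circ m}$, ensuring that the distance-decreasing inequality holds uniformly across the quantifier ``for every choice of labeling''.
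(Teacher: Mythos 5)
Your proposal shares the paper's core dynamical idea (iterating $f_K$ and using equidistribution of preimages), but there is a decisive gap in the convergence step. You claim that the canonical measure $\mu_f$ on $X_K^{\an}$ ``has full Berkovich-topological support,'' so that a sequence $\tilde{x}_m \in f_K^{-m}(x)$ converges to $y$. This is false over the equi-characteristic field $K$ in question: the limiting measure of $q^{-m\dim X}f_K^{\circ m*}\delta_x$ is the Dirac mass at the Shilov boundary $\eta$ of $X_K^{\an}$ (the paper cites \cite[Theorems 1 and 10]{petsche2009nonarchimedean}, using that $X_K^{\an}$ deformation retracts onto $\eta$). So the preimages accumulate at $\eta$, not at $y$; for generic $y\neq\eta$ your argument never produces a chain of small length connecting $x$ to $y$ directly. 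The paper's proof instead routes through $\eta$: it shows $d_{X_K^{\an}}(x,\eta)=0$ and $d_{X_K^{\an}}(y,\eta)=0$ separately, using the morphisms $f_K^{\circ n}|_{\sU}\colon(\sU,\eta)\to(X_K^{\an},\eta)$ (which are permissible, pointed, of good reduction by \autoref{example:constantendo}) together with preimages $y_n\to\eta$ of $y$, and then concludes by the triangle inequality.

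A second, more technical problem is the ``pushforward of labelings'' step. Post-composing the morphisms of a chain with $f_K^{\circ m}$ sends a chain for the labeling $(x_1,\dots,x_m)$ to a chain for the labeling $(f_K^{\circ m}(x_1),\dots,f_K^{\circ m}(x_m))$, which is a \emph{different} labeling unless each $x_i$ is fixed by $f_K^{\circ m}$. Since the theorem quantifies over \emph{every} choice of permissible open cover and labeling, you would only obtain an inequality relating two different pseudo-metrics, not a self-improving inequality for a single one. The paper sidesteps this entirely: by \autoref{remark:Shivlovtube} and \autoref{rem:Shivlovlabeling}, the Shilov point $\eta$ is automatically one of the labels in \emph{every} labeling, and $f_K$ fixes $\eta$ (being the unique preimage of the generic point of $X$ under $\red_{X_K^{\an}}$, preserved by the dominant $f_K$), so the single morphism $f_K^{\circ n}|_{\sU}$ can serve directly as a chain link without modifying the ambient labeling. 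If you replace the target of convergence by $\eta$ and use $\eta$ as the hub for the triangle inequality, your argument can be repaired and becomes essentially the paper's proof.
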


\begin{proof}
Let $\underline{\sV} = \brk{(\sV_1,z_1),\dots,(\sV_m,z_m);x_1,\dots,x_m}$ denote a permissible cover and labeling of $X_K^{\an}$. 
Let $\eta$ denote the Shilov boundary (\autoref{defn:Shilov}) of $X_K^{\an}$ i.e.,  $\eta$ is the unique preimage under the reduction map $\red_{X_K^{\an}}$ of the generic point of $X$. 
Let $\sV_i$ be a connected $K$-affinoid in $\underline{\sV}$ such that $\eta \in \Int(\sV_i)$. 
Note that such a $\sV_i$ appears in every choice of permissible cover and labeling on $X_K^{\an}$. 
By abuse of notation, we will denote this $\sU_i$ by $\sU$. 

By \autoref{lemma:propertiespolarization}.(1,3), we have that $X_K$ admits a polarized dynamical system, namely $(f_K,\mathcal{L}_K)$, and let $\deg(f_K) = q^{\dim(X_K)}$. 
For any $n\geq 1$, consider the morphism $f^{\circ n}_K\colon X_K^{\an}\to X_K^{\an}$, which can be realized as the analytification of base change of the morphism $f^{\circ n}\colon X \to X$ to $K$. 
This follows from GAGA \cite{BerkovichSpectral} and because $\underline{\Hom}_k(X,X) \times_k \Spec(K) \cong \underline{\Hom}_K(X_K,X_K)$. 

Let $y\in X^{\an}_K(K)$ be a rigid point which is periodic with respect to $f_K$. 
Note that there exists infinitely many such $y$ by \cite[Theorem 5.1]{Fakhruddin:Questions}, and in fact, these points are Zariski dense on $X_K$. 
Let $m\geq 1$ be a positive integer such that $y$ is a fixed point for $f_K^{\circ m}$. 
To ease notation, we assume that $m = 1$. 
Since $X_K^{\an}$ is contractible (in particular, $X_K^{\an}$ deformation retracts onto $\eta$), the sequence of probability measures $q^{-\dim(X_K)n}f^{\circ n *}_K\delta_y$ converges to a probability measure $\mu$ whose support is equal to $\eta$ (see e.g., \cite[Theorems 1 and 10]{petsche2009nonarchimedean}), and therefore, we may obtain a sequence of rigid points $y_n $ converging to $\eta$ such that $f^{\circ n}_K(y_n) = y$. 
We have that $f^{\circ n}_{K}(\eta) = \eta$ since $f^{\circ n}_{K}$ is the base change of the $k$-morphism $f^{\circ n}$, and $f^{\circ n}_{K|\sV}\colon \sV \to X_K^{\an}$ is algebraically extendable and pointed with respect to the above labeling.

Moreover, we have that
\[
\rho_{X_K^{\an},\underline{\sV}}(y,\eta) \leq \inf_{\ell\to \infty} \rho_{\sV}(y_\ell,\eta),
\]
where $\rho_{\sV}$ is the metric from \autoref{coro:basictubemetrizable}. 
Note that $y_\ell \in \sV$ for $\ell\gg 0$, and since $\rho_{\sV}$ defines the $K$-analytic topology on $\sV$ and $y_\ell \to \eta$, we have that $\rho_{X_K^{\an},\underline{\sV}}(y,\eta) = 0$. 
To conclude, we note that a similar analysis tells us that for any periodic point $z\in X_K^{\an}(K)$,  we have that $\rho_{X_K^{\an},\underline{\sV}}(\eta,z) = 0$, and thus the triangle inequality tells us that $\rho_{X_K^{\an},\underline{\sV}}$ is zero on the periodic points of $X_K^{\an}$. 
\end{proof}


\begin{corollary}\label{coro:mapfrompolarized}
Let $X$ and $Y$ be projective varieties over $k$. Suppose that $Y$ admits a polarized dynamical system $(f,\mathcal{L})$. 
If there exists a non-constant $k$-morphism $\varphi\colon Y \to X$, then $X_K^{\an}$ is not $K$-Lea hyperbolic. 
\end{corollary}

\begin{proof}
Let $\varphi_K\colon Y_K \to X_K$ denote the base change of the morphism $\varphi$ to $K$. 
Let $(f_K,\mathcal{L}_K)$ denote the base change of the polarized dynamical system $(f,\mathcal{L})$ to $X_K$.
By \cite[Theorem 5.1]{Fakhruddin:Questions}, the periodic points with respect to $f_K$ are Zariski dense on $Y_K$, and hence $\varphi^{\an}_{K\vert Y_{K,\text{per}(f_K)}}$ is non-constant. 
By the proof of \autoref{thm:vanishingdynamical}, we see that the $K$-Kobayashi pseudo-metric will not be a metric for any choice $\underline{\sV}$ of permissible cover of $X_K^{\an}$ which contains a connected $K$-affinoid containing the Shilov boundary of $Y_K^{\an}$,  and hence $X_K^{\an}$ is not $K$-Lea hyperbolic. 
\end{proof}

Using the above result, we can prove \autoref{xthm:LeaimpliesBrody}. 

\begin{prop}\label{prop:leagroupless}
Let $X_K^{\an}$ be a constant $K$-analytic variety. 
If $X_K^{\an}$ is $K$-Lea hyperbolic, then $X$ is groupless. 
\end{prop}

\begin{proof}
Suppose to the contrary that $X$ is not groupless, and so by \autoref{lemma:grouplessequivgeometric}.(2), there exists an abelian variety $A/k$ and a non-constant morphism $f\colon A\to X$. 
Since $A$ admits a polarized dynamical system by \autoref{exam:polarizeddynamical}.(2), \autoref{coro:mapfrompolarized} implies that $X_K^{\an}$ is not $K$-Lea hyperbolic. 
\end{proof}

\begin{theorem}[= \autoref{xthm:LeaimpliesBrody}]
Let $X_K^{\an}$ be a constant $K$-analytic variety. 
If $X_K^{\an}$ is $K$-Lea hyperbolic, then $X_K^{\an}$ is $K$-analytically Brody hyperbolic i.e., for every connected algebraic group $G$ over $K$,  every morphism $G^{\an}\to X_K^{\an}$ is constant. 
\end{theorem}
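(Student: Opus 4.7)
The plan is to leverage Proposition 8.6 (already proved), which establishes that the $K$-Lea hyperbolicity of $X_K^{\an}$ forces $X$ to be groupless over $k$. By Lemma 3.8(1) this transfers to grouplessness of $X_K$ over $K$, so every \emph{algebraic} morphism from a connected algebraic group over $K$ to $X_K$ is constant. The remaining task is to upgrade this algebraic statement to constancy of every \emph{analytic} morphism $\phi\colon G^{\an}\to X_K^{\an}$.

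To separate the two kinds of connected algebraic groups, I would invoke Chevalley's structure theorem and write $1\to L\to G\to A\to 1$ with $L$ a connected affine algebraic group and $A$ an abelian variety. If $\phi$ is constant on each fiber of the projection $G^{\an}\to A^{\an}$, then it descends to an analytic morphism $\bar\phi\colon A^{\an}\to X_K^{\an}$; since both $A^{\an}$ and $X_K^{\an}$ are proper Berkovich spaces associated to projective algebraic varieties, Berkovich GAGA forces $\bar\phi$ to be algebraic, and the grouplessness of $X_K$ kills it. Otherwise the restriction of $\phi$ to some fiber yields a non-constant analytic morphism $L^{\an}\to X_K^{\an}$ from a connected affine algebraic group, which is what must be ruled out.

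For the affine case, I would first observe that $K$-Lea hyperbolicity precludes rational curves on $X$: a rational curve would yield a non-constant morphism $\mathbb{P}^1\to X$ that is either non-dominant (if $\dim X>1$) or dominant and generically one-to-one (if $X\cong\mathbb{P}^1$), contradicting Corollary 8.4 applied to the polarized dynamical system on $\mathbb{P}^1$. Since $L$ is connected and affine, it contains a one-parameter subgroup isomorphic to $\Gm$ or $\Ga$, so restriction of $\phi$ would produce a non-constant analytic morphism $\mathbb{A}^{1,\an}\to X_K^{\an}$ or $\Gm^{\an}\to X_K^{\an}$. I would then invoke Cherry's non-Archimedean analogue of the Picard theorem \cite{Cherry, CherryKoba} to constrain such a morphism to land in a tube over a single closed point of $X$, and combine this with the grouplessness of $X_K$ and the rigidity of the reduction map to force constancy.

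The main obstacle is precisely this affine case, which lies outside the reach of Berkovich GAGA and is not directly addressed by Proposition 8.6; the workaround is to couple the absence of rational curves on $X$ (itself a consequence of Corollary 8.4) with the non-Archimedean hyperbolicity machinery of Cherry, which together suffice to rule out a non-constant analytic morphism from a one-parameter affine subgroup into $X_K^{\an}$.
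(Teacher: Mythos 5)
Your plan agrees with the paper in its first step, namely deriving from Proposition \ref{prop:leagroupless} that $X$ is groupless over $k$ (and hence, by Lemma \ref{lemma:grouplessequivgeometric}.(1), that $X_K$ is groupless over $K$). But from there you and the paper diverge sharply. The paper's entire remaining argument is a one-line citation of \cite[Corollary 3.14]{JVez}, which asserts precisely that, for a constant $K$-analytic space, being $K$-analytically Brody hyperbolic is \emph{equivalent} to the underlying variety being groupless. That corollary already packages the entire ``algebraic groupless $\Rightarrow$ no non-constant analytic map from any $G^{\an}$'' upgrade, including the Chevalley decomposition, the abelian quotient, and the affine part.

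What you are attempting is to re-derive that equivalence from scratch, and while the overall architecture (Chevalley's theorem, GAGA on the abelian quotient, Cherry's machinery on the affine part) is the same one that underlies Javanpeykar--Vezzani's proof, your sketch has gaps that would need to be filled. First, the descent step: knowing that $\phi$ is constant on the fibers of $G^{\an}\to A^{\an}$ over $K$-points does not by itself produce a factorization through $A^{\an}$ in the Berkovich category; one must also control the fibers over non-rigid points (or argue via density of $K$-points plus continuity, which requires more care since $G$ is not proper). Second, your treatment of the affine case is exactly the hard core of the cited result and remains vague: ``combine Cherry's theorem with grouplessness and rigidity of the reduction map to force constancy'' is not a proof. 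Cherry's Picard-type theorem constrains the image of a map from $\Gm^{\an}$ or $\mathbb{A}^{1,\an}$ to live in a reduction tube only under specific hypotheses on the target, and closing this argument is precisely what \cite[Corollary 3.14]{JVez} was designed to do. Third, your appeal to the absence of rational curves via Corollary \ref{coro:mapfrompolarized} is redundant once you have grouplessness, since groupless proper varieties automatically contain no rational curves. In short: your proposal is not wrong in spirit, but it is substantially longer than needed (re-proving a cited theorem) and the sketch has genuine unfilled gaps precisely where that theorem does the work. The efficient and correct route is to observe that \cite[Corollary 3.14]{JVez} reduces the statement to grouplessness, which Proposition \ref{prop:leagroupless} supplies.
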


\begin{proof}
By \cite[Corollary 3.14.]{JVez},  $X_K^{\an}$ being $K$-analytically Brody hyperbolic is equivalent to $X$ being groupless, and so the result follows from \autoref{prop:leagroupless}. 
\end{proof}

We conclude this section with two remarks. 

\begin{remark}
Unfortunately, we cannot prove that $d_{X_K^{\an}}$ is identically zero for any projective variety $X/k$ admitting a polarized dynamical system.  Although the periodic points are Zariski dense by  \cite[Theorem 5.1]{Fakhruddin:Questions}, this does not imply that they are dense in the Berkovich analytic topology as the set of closed points on a projective variety are not locally constructible.  

In fact, one can show that for any rational map $f\colon \mP^{1,\an}_K\to \mP^{1,\an}_K$ of degree $d\geq 2$, the periodic points of $f$ are not dense in the Berkovich analytic topology. 
The closure of the set of periodic points of $f$ in $\mP^{1,\an}_K$ consists of the Julia set $J_f$ and the periodic points (some of which may be in $J_f$). In particular, any non-periodic, non-Julia point is not in the closure of the set of periodic points, and such points (Fatou and non-periodic) necessarily exist. The Fatou set is a nonempty open set, so in particular it contains uncountably many points of type 1. However, the set of periodic points that are type 1 is necessarily countable (since they are the roots of the countably many equations $f^{\circ n}(z)=z$), so some of the type 1 points in the Fatou set must be non-periodic.  We refer the reader to \cite[Chapter 5]{Benedetto:Dynamics} for details on the Julia and Fatou sets. 
\end{remark}

\begin{remark}
We note that the existence of an endomorphism of infinite order is related to the potential density of rational points (see e.g., \cite{AmerikBogomolovRovinsky:RemarksEndomorphisms}). 
\end{remark}

\section{\bf Proof of \autoref{xthm:HyperbolicImpliesLeaIntro}}
\label{sec:proofLeaHyper}
In this section, we prove our main result illustrating how the the hyperbolicity (\autoref{defn:hyperbolic}) of $X$ is related to the $K$-Lea hyperbolicity (\autoref{defn:Leahyper}) of $X_K^{\an}$.  

The main theorem of this section is the following.

\begin{theorem}\label{thm:hyperbolicimpliesLea}
Let $X_K^{\an}$ be a constant $K$-analytic variety. 
If $X$ is hyperbolic, then $X_K^{\an}$ is $K$-Lea hyperbolic. 
\end{theorem}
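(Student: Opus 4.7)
The plan is to argue by contradiction, reducing the question of $K$-Lea hyperbolicity to a normality statement for permissible pointed morphisms of good reduction out of a reducible algebraic tube. First, observe that every integral closed subvariety $Z$ of $X$ is itself hyperbolic, since its integral subvarieties are integral subvarieties of $X$ and thus of general type. In light of the definition of $d_{X_K^{\an}}$ as an infimum over chains of integral closed constant subvarieties (\autoref{defn:KKobayashipseudometric}), it suffices to prove that for every such $Z$ and every continuous choice of permissible open cover and labeling $\underline{\sU}$ of $Z_K^{\an}$, the pseudo-metric $\rho_{Z_K^{\an}, \underline{\sU}}$ separates points. We may therefore assume $Z = X$ and work directly with $\rho_{X_K^{\an}, \underline{\sU}}$.

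Suppose for contradiction that $\rho_{X_K^{\an}, \underline{\sU}}(x,y) = 0$ for some distinct $x,y \in X_K^{\an}$. Unpacking \autoref{defn:pseudometric}, this yields a sequence of chains $\alpha_n$ with $\ell(\alpha_n) \to 0$, all with the common links $\{(\sU_1,z_1),\ldots,(\sU_m,z_m)\}$ and target labels $x_1,\ldots,x_m$ supplied by $\underline{\sU}$, together with permissible pointed morphisms of good reduction $f_{n,i} \in \REMor((\sU_i,z_i),(X_K^{\an},x_i))$ and points $p_{n,i},q_{n,i} \in \sU_i$ satisfying $x = f_{n,1}(p_{n,1})$, $f_{n,m}(q_{n,m}) = y$, and the matching condition $f_{n,i}(q_{n,i}) = f_{n,i+1}(p_{n,i+1})$. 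Since $\sum_i \rho_{(\overline{k(t)},\cP),\sU_i}(p_{n,i},q_{n,i}) \to 0$ and $\rho_{(\overline{k(t)},\cP),\sU_i}$ defines the Berkovich topology on $\sU_i$ by \autoref{coro:basictubemetrizable}, after extracting a subsequence we may assume $p_{n,i},q_{n,i} \to w_i$ for a common limit $w_i \in \sU_i$, for each $i = 1,\dots,m$.

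The heart of the argument is to show that the family $\REMor((\sU_i,z_i),(X_K^{\an},x_i))$ is normal at $w_i$, so that a subsequence of $(f_{n,i})_n$ converges pointwise on a neighborhood of $w_i$ to a continuous limit $f_i$. Each $f_{n,i}$ has an associated algebraic reduction $\wt{f}_{n,i}\colon \overline{\red_{X_K^{\an}}(\sU_i)} \to X$ with image $Y_{n,i}$ an integral closed subvariety of $X$; by hyperbolicity, $Y_{n,i}$ is of general type. The Severi--Iitaka theorem (\autoref{thm:SeveriIitaka}) applied to the fixed projective variety $\overline{\red_{X_K^{\an}}(\sU_i)}$, together with the Kobayashi--Ochiai finiteness theorem (\autoref{thm:KobayashiOchiai}) for dominant rational maps to each such $Y_{n,i}$ of general type, forces only finitely many $\wt{f}_{n,i}$ to occur. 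After a subsequence we may therefore assume all $\wt{f}_{n,i}$ coincide with a single $\wt{f}_i$ of image $Y_i \subseteq X$, so that $f_{n,i}(\sU_i) \subseteq \red_{X_K^{\an}}^{-1}(Y_i)$. Choosing an affine open cover of $Y_i$ yields an affinoid cover of $\red_{X_K^{\an}}^{-1}(Y_i)$; by compactness of $X_K^{\an}$ we can pass to a further subsequence so that $f_{n,i}(w_i)$ converges to a point lying in a single affinoid patch $W$, and then localize $\sU_i$ to a basic sub-tube neighborhood of $w_i$ whose image lies in $W$. Rodr\'iguez V\'azquez's non-Archimedean Montel theorem (\autoref{thm:vazquezMontel}) applied to this localized family, with boundaryless, $\sigma$-compact, strictly $K$-analytic source and strictly $K$-affinoid target $W$, produces the required pointwise convergent subsequence with continuous limit $f_i$.

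The chain-matching identities $f_{n,i}(q_{n,i}) = f_{n,i+1}(p_{n,i+1})$, combined with continuity of each $f_i$ and the convergence $p_{n,i}, q_{n,i} \to w_i$, pass to the limit to give $f_i(w_i) = f_{i+1}(w_{i+1})$ for every $i$. Telescoping, $x = f_1(w_1) = f_2(w_2) = \cdots = f_m(w_m) = y$, contradicting $x \neq y$. The main obstacle is the normality step itself: \autoref{thm:vazquezMontel} demands a strictly $K$-affinoid target, whereas $X_K^{\an}$ is proper but almost never affinoid. The hyperbolicity of $X$, via Kobayashi--Ochiai and Severi--Iitaka, is precisely what pins the reductions $\wt{f}_{n,i}$ to a finite set, forcing the images of the morphisms to eventually concentrate in a single affinoid patch of $X_K^{\an}$ and thereby enabling the affinoid Montel theorem to apply locally at $w_i$.
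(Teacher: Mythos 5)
Your overall architecture is the same as the paper's: use the hyperbolicity of $X$, Kobayashi--Ochiai, and the non-Archimedean Montel theorem to establish a normality statement for the families $\REMor((\sU_i,z_i),(X_K^{\an},x_i))$, then extract convergent subsequences from a sequence of short chains to force $x=y$. However, two steps that the paper treats with substantial care are asserted without justification in your proposal, and each conceals a genuine issue.

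First, the reduction from $d_{X_K^{\an}}$ to $\rho_{Z_K^{\an},\underline{\sU}}$. You write that ``it suffices to prove that for every such $Z$ $\ldots$ the pseudo-metric $\rho_{Z_K^{\an},\underline{\sU}}$ separates points,'' and then pass directly to $Z=X$. This is not immediate: in \autoref{defn:KKobayashipseudometric}, $d_{X_K^{\an}}(x,y)$ is the infimum over chains through \emph{arbitrarily many} integral constant subvarieties $Z_{0,K},\dots,Z_{n,K}$. Even if each $\rho_{Z_{i,K}^{\an},\underline{\sU}_i}$ is a genuine metric, a sequence of longer and longer chains could in principle have total lengths tending to $0$ without all intermediate points coalescing. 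The paper devotes \autoref{xprop:normalimpliesLea} to exactly this passage: it builds from each chain a permissible open cover and labeling of $X_K^{\an}$, invokes \autoref{remk:rhodefinesanalytic} to see that the resulting $\rho_{X_K^{\an},\cdot}$ induces the Berkovich topology, and then uses metric balls to show that the chain points accumulate at both $x$ and $y$, forcing $x=y$. Without something like this step, your reduction is a gap.

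Second, the localization into an affinoid in the normality step. After pinning $\wt f_{n,i}$ to a single reduction, you write ``localize $\sU_i$ to a basic sub-tube neighborhood of $w_i$ whose image lies in $W$.'' But good reduction only controls the Zariski closure of $\red_{X_K^{\an}}(f_{n,i}(u'))$ for $u'$ in the source; it does not produce, for free, one neighborhood $\sV$ of $w_i$ with $f_{n,i}(\sV)\subset W$ for all $n$. The paper makes this precise by shrinking to $\sV = \sU\cap \red_{Z_K^{\an}}^{-1}(\overline{\red_{Z_K^{\an}}(z')})$, invoking a Bertini-type argument to place the finitely many reduced images $Y_1,\dots,Y_m$ inside an affine open $U$ of $X$, so that $f_n(\sV')\subseteq\red_{X_K^{\an}}^{-1}(U)$, an affinoid domain. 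Moreover, this argument fails when $x$ is the Shilov boundary of $X_K^{\an}$ (equivalently $X'=X$), and the paper handles that case separately by using permissibility to show each $f_n$ is finite at $z$ via \autoref{prop:equivfinite}, and then shrinking targets accordingly. Your sketch does not distinguish these cases, and the Shilov case cannot be dispatched by compactness of $X_K^{\an}$ alone.

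A smaller remark: you invoke Severi--Iitaka to control the varying targets $Y_{n,i}$, but this is both unnecessary and not quite enough as stated (Severi--Iitaka gives finiteness only up to birational equivalence, whereas here the $Y_{n,i}$ are honest closed subvarieties of $X$). In fact the pointedness built into \autoref{defn:labeling} already fixes the target: since each $\wt f_{n,i}$ sends the generic point $\red_{Z_K^{\an}}(z_i)$ to $\red_{X_K^{\an}}(x_i)$ and is projective, its image is the Zariski closure $X'=\overline{\red_{X_K^{\an}}(x_i)}$ for every $n$, and Kobayashi--Ochiai alone bounds the $\wt f_{n,i}$. This is exactly \autoref{lemma:finitelymanyreduction}.
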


\autoref{thm:hyperbolicimpliesLea} is in the same spirit as \cite[Theorem 1.3]{JVez} in that one inherits hyperbolicity from the special fiber.  
We will first show that when $X$ is hyperbolic,  the pseudo-metric $\rho_{Z_K^{\an},\underline{\sV}}$ will be a metric for any choice $\underline{\sV}$ of permissible cover and labeling on any constant $K$-analytic subvariety $Z_K^{\an}$ of $X_K^{\an}$.

\begin{prop}\label{thm:hyperbolicimpliesnormal}
Let $X/k$ be a projective hyperbolic variety, and let $X_K^{\an}$ denote the constant $K$-analytic variety associated to $X$.  
For every permissible cover and every labeling 
\[\underline{\sV}_{Z_K^{\an}}= \brk{(\sV_1,z_1),\dots,(\sV_m,z_m);x_1,\dots,x_m}\] 
of every constant $K$-analytic subvariety $Z_K^{\an}$, the family of algebraically extendable $K$-analytic morphisms $\Mor_{\GR}((\sV_i,z_i),(X_K^{\an},x_i))$ is finite for each $1\leq i \leq m$. 
\end{prop}

The proof of \autoref{thm:hyperbolicimpliesnormal} essentially follows from the following lemma.

\begin{lemma}\label{lemma:finitelymanyreduction}
Let $X/k$ be a projective, hyperbolic variety,  let $X_K^{\an}$ denote the constant $K$-analytic variety associated to $X$. 
Let $Z/k$ be a projective variety,  and let $Z_K^{\an}$ denote the constant $K$-analytic variety associated to $Z$. 
Let $z\in Z_K^{\an}$ be a birational point and let $\sV$ be a connected $K$-affinoid neighborhood of $z.$ 
For any point $x\in X_K^{\an}$, the family of algebraically extendable $K$-analytic morphisms $\Mor_{\GR}((\sV,z),(X_K^{\an},x))$ is finite. 
\end{lemma}

\begin{proof}
Since $z \in Z_K^{\an}$ is a birational point (\autoref{defn:birational}), we have that 
\[\overline{\iota_{Z_K}(\sV)} \cong \overline{\iota_{Z_K}(z)} \cong Z_K,\]
where $\iota_{Z_K}$ is the kernel map for $Z_K^{\an}$ (see Subsection \ref{subsec:reduction}) and $\iota_{Z_K}(z)$ is the generic point of $Z_K$.  
This implies that for any $x\in X_K^{\an}$, any algebraically extendable $K$-analytic morphism $f\colon (\sV,z) \to (X_K^{\an},x)$ can be realized as the restriction of the analytification of a morphism of $K$-varieties
\[
h\colon (Z_K,\iota_{Z_K}(z)) \to (X_K,\iota_{X_K}(x))
\]
to $(\sU,z)$. 
By \cite[Corollary 3.4.12]{BerkovichSpectral}, the analytification functor is fully faithful so the $K$-morphism $h$ is unique. 
We will prove that there are only finitely many morphisms $h$. 

Let $X'$ denote the reduced induced scheme structure on the Zariski closure of $\iota_{X_K}(x)$, which we note is an integral projective subvariety of $X_K$. 
Since $\iota_{Z_K}(z)$ is the generic point of $Z_K$,  the morphism $h$ is a dominant morphism between projective $K$-varieties $Z_K$ and $X'$, and hence $h$ is projective. 
Since $h$ is quasi-compact and closed, $h$ is surjective onto its image, \cite[\href{https://stacks.math.columbia.edu/tag/0AH6}{Lemma 0AH6}]{stacks-project}. 
Since $X/k$ is hyperbolic,  \autoref{lemma:basechangehyper} tells us that $X'$ admits a dominant $K$-morphism to a projective $K$-variety $X''$ which is of general type, say $g\colon X' \to X''$. 
Therefore, the composition $g\circ h$ is a dominant $K$-morphism from $Z_K \to X''$,  and so \autoref{thm:KobayashiOchiai} tells us that there are only finitely many such maps. 
This implies that there are only finitely many such maps $h$ and hence finitely many algebraically extendable $K$-analytic morphism from $(\sV,z) \to (X_K^{\an},x)$. 
\end{proof}

\begin{proof}[Proof of \autoref{thm:hyperbolicimpliesnormal}]
This follows immediately from the fact that a subvariety of a hyperbolic variety is also hyperbolic and \autoref{lemma:finitelymanyreduction}. 
\end{proof}

\begin{corollary}\label{coro:hyperbolicpho2}
Let $X/k$ be a projective, hyperbolic variety,  let $X_K^{\an}$ denote the constant $K$-analytic variety associated to $X$. 
For any choice $\underline{\sV}$ permissible cover and labeling on any constant $K$-analytic subvariety $Z_K^{\an}$ of $X_K^{\an}$, the pseudo-metric $\rho_{Z_K^{\an},\underline{\sV}}$ is a metric. 
\end{corollary}

\begin{proof}
First, we note that a subvariety of a hyperbolic variety is also hyperbolic. 
Now the result follows from \autoref{thm:hyperbolicimpliesnormal}. Indeed, since there are only finitely many algebraically extendable $K$-analytic morphisms which are pointed with respect to the labeling, the infimum in the definition of $\rho_{Z_K^{\an},\underline{\sV}}$ is in fact a minimum of non-zero real numbers, and hence $\rho_{Z_K^{\an},\underline{\sV}}$ is non-zero. 
\end{proof}

Next, we will show that if the pseudo-metric $\rho_{Z_K^{\an},\underline{\sV}}$ is a metric for any choice $\underline{\sV}$ of permissible cover and labeling on any constant $K$-analytic subvariety $Z_K^{\an}$ of $X_K^{\an}$, then $X_K^{\an}$ is $K$-Lea hyperbolic. 

\begin{theorem}\label{xprop:normalimpliesLea}
Let $X_K^{\an}$ denote a constant $K$-analytic variety. 
Suppose that for every constant $K$-analytic subvariety $Z_K^{\an}$ of $X_K^{\an}$,  every permissible cover, and every labeling 
\[\underline{\sV}_{Z_K^{\an}}= \brk{(\sV_1,z_1),\dots,(\sV_m,z_m);x_1,\dots,x_m}\] 
of $Z_K^{\an}$,  the pseudo-metric $\rho_{Z_K^{\an},\underline{\sV}_{Z_K^{\an}}}$ is a metric. 
Then,  $X_K^{\an}$ is $K$-Lea hyperbolic. 
\end{theorem}

\begin{proof}
To begin,  we make a choice $\underline{\sV}_{Z_K^{\an}}$ of a permissible cover and labeling for every constant $K$-analytic subvariety $Z_K^{\an}$ of $X_K^{\an}$. 
Let $d_{X_K^{\an},\uusV}$  denote the $K$-Kobayashi pseudo-metric with respect to these choices $\uusV$. 
Suppose that there exist $x,y\in X_K^{\an}$ such that
\[
d_{X_K^{\an}\uusV}(x,y) =  \inf \pwr{\sum_{i=1}^n \rho_{Z_{i,K}^{\an},\underline{\sV}_i} (p_{i-1},p_{i})} = 0
\]
where the infimum is taken over chains of points $x = p_0,p_1,\dots,p_n = y$ and integral constant $K$-analytic subvarieties $Z_{0,K}^{\an},\dots,Z_{n,K}^{\an}$ such that 
\[
p_0 \in Z_{0,K}^{\an},p_1 \in Z_{0,K}^{\an} \cap Z_{1,K}^{\an},\dots,p_{n-1} \in Z_{n-1,K}^{\an}\cap Z_{n,K}^{\an},p_n \in Z_{n,K}^{\an}
\]
and where $\rho_{Z_{i,K}^{\an},\underline{\sV}_i}$ is the pseudo-metric with respect to $\underline{\sV}_i$. 
We will show that $x=  y$.

By our assumptions,  we have that 
$\rho_{Z_K^{\an},\underline{\sV}_{Z_K^{\an}}}$ is a metric. 
This implies that $\rho_{Z_K^{\an},\underline{\sV}_{Z_K^{\an}}}(x,y)$ is non-zero for any constant $K$-analytic subvariety $Z_K^{\an}$ of $X_K^{\an}$, and in particular, every term in the above infimum is non-zero.  
We may extract a sequence of chains of integral closed constant subvarieties such that 
\[
\lim_{j\to \infty}\pwr{\sum_{i_j=1}^{n_j} \rho_{Z_{i_j,K}^{\an},\underline{\sV}_{i_j}} (p_{i_j-1},p_{i_j})} = 0.
\]
Note that $p_0 = x$ and $p_{n_j} = y$ for all $j \in \mN$ and that we may reduce to the setting where the summations in the limit only contain a single constant $K$-analytic subvariety i.e., the setting where
\begin{equation}\label{eqn:single}
\lim_{j\to \infty}\rho_{Z_{j,K}^{\an},\underline{\sV}_{j}} (x,y) = 0.
\end{equation}
After reordering, we may and do assume that $\rho_{Z_{j,K}^{\an},\underline{\sV}_{j}} (x,y) \geq \rho_{Z_{j+1,K}^{\an},\underline{\sV}_{j+1}} (x,y)$.

For each constant $K$-analytic subvariety $Z_{j,K}^{\an}$, recall that $\underline{\sV}_{j}$ denotes a permissible cover and labeling of $Z_{j,K}^{\an}$. 
Let $\underline{\sV}_{X_K^{\an}}$ denote the permissible cover and labeling of $X_K^{\an}$ from the first line of the proof. 
We will build a collection of permissible covers and labelings of $X_K^{\an}$ by piecing together the permissible covers and labeling $\underline{\sV}_{j}$ one at a time. 
Indeed, first consider $j = 0$ i.e., $Z_{0,K}^{\an}$ and $\underline{\sV}_{0}$. 
Then, we can define a permissible cover and labeling $\underline{\sV}_{\underline{0},X_K^{\an}}$ of $X_K^{\an}$ by considering the permissible cover and labeling from $\underline{\sV}$ and then simply adding on the permissible covering and labelings in $\underline{\sV}_{0}$. Note that this will give a permissible cover and labeling of $X_K^{\an}$. 
We can iterate this process i.e., define a sequence of permissible covers and labelings $\underline{\sV}_{\underline{j},X_K^{\an}}$ of $X_K^{\an}$ such that the permissible cover and the points in the labeling of $\underline{\sV}_{\underline{j-1},X_K^{\an}}$ is contained in $\underline{\sV}_{\underline{j},X_K^{\an}}$. 

Our assumption tells us that $\rho_{X_K^{\an},\underline{\sV}_{\underline{j},X_K^{\an}}}$ is a metric for every $j \in \mN$.  
The above convention that $\rho_{Z_{j,K}^{\an},\underline{\sV}_{j}} (x,y) \geq \rho_{Z_{j+1,K}^{\an},\underline{\sV}_{j+1}} (x,y)$ and the definition of $\underline{\sV}_{\underline{j},X_K^{\an}}$ implies that 
\[
\rho_{X_K^{\an},\underline{\sV}_{\underline{j},X_K^{\an}}}(x,y) \geq \rho_{X_K^{\an},\underline{\sV}_{\underline{j+1},X_K^{\an}}}(x,y).
\]
Moreover, we have that 
\[
\rho_{X_K^{\an},\underline{\sV}_{\underline{0},X_K^{\an}}} \geq \rho_{X_K^{\an},\underline{\sV}_{\underline{j},X_K^{\an}}}
\]
for all $j\geq 1$ because we are talking an infimum over a larger collection of permissible covers and labelings.  
Since
\[
\lim_{j\to \infty}\rho_{Z_{j,K}^{\an},\underline{\sV}_{j}} (x,y) = 0,
\]
we have that 
\begin{equation}\label{eqn:finalimit}
\lim_{j\to \infty}\rho_{X_K^{\an},\underline{\sV}_{\underline{j},X_K^{\an}}}(x,y)= 0.
\end{equation}

Recall from \autoref{remk:rhodefinesanalytic} that $(X_K^{\an},\rho_{X_K^{\an},\underline{\sV}_{\underline{0},X_K^{\an}}})$ is a metric space and let $\mathcal{S} = (a_{j,j_m})_{j\in \mN,1\leq j_m \leq n_j}$ denote the sequence of points appearing in \eqref{eqn:single} as the images of points in the links of the chains connecting $x$ to $y$. 
Note that $a_{j,1} = x$ and $a_{j,n_j} = y$ for all $j \in \mN$.
If $n_j = 1$ any $j\in \mN$, then $x = y$. 
If we assume that $n_j \neq 1$ for all $j\in \mN$, we also have that $x = y$. 
Indeed,  \eqref{eqn:finalimit} implies that for each $\varepsilon>0$,  $y$ is in the ball around $x$ of radius $\varepsilon$ with respect to $\rho_{X_K^{\an},\underline{\sV}_{\underline{0},X_K^{\an}}}$.  
Since $(X_K^{\an},\rho_{X_K^{\an},\underline{\sV}_{\underline{0},X_K^{\an}}})$ is a metric space,  this implies that the constant sequence $(y)$ converges to $x$, and since $X_K^{\an} $ is Hausdorff, this implies that $x=y$. 

To conclude, we note that the above argument holds for any choice of permissible cover and labeling on any constant $K$-analytic subvariety, and so  $X_K^{\an}$ is $K$-Lea hyperbolic. 
\end{proof}


\begin{proof}[Proof of \autoref{xthm:HyperbolicImpliesLeaIntro}]
This follows from \autoref{coro:hyperbolicpho2} and \autoref{xprop:normalimpliesLea}. 
\end{proof}

\section{\bf Proof of \autoref{xthm:main1} and corollaries}
\label{sec:proofmain}
In this final section, we prove our main results. 
We remind the reader of the conventions established in Section \ref{sec:conventions}.

First, we will prove a distance decreasing property for the $K$-Kobayashi pseudo-metric.

\begin{lemma}\label{lemma:distancedecreasing}
Let $C/k$ be a smooth projective curve and let $X/k$  projective hyperbolic variety, and let $C_K^{\an}$ (resp.~$X_K^{\an}$) be the constant $K$-analytic variety associated to $C$ (resp.~$X$). 
For any choice of permissible cover and labeling $ \underline{\sV}_{C_K^{\an}} = \brk{(\sV_1,z_1),\dots,(\sV_m,z_m);x_1,\dots,x_m}$ of $C_K^{\an}$,  there exist a choice $\uusV$ of permissible covers and labelings $\underline{\sV}_{Z_K^{\an}}$ of every constant $K$-analytic subvariety $Z_K^{\an}$ of $X_K^{\an}$ such that for any $k$-morphism $f\colon C_K^{\an}\to X_K^{\an}$, we have that for any distinct pair of points $x,y\in C_K^{\an}$, the inequality 
\[
d_{X_K^{\an},\uusV}(f(x),f(y)) \leq d_{C_K^{\an},\underline{\sV}_{C_K^{\an}}}(x,y)
\]
holds. 
\end{lemma}

\begin{proof}
Since $C_K^{\an}$ and $X_K^{\an}$ are proper, any analytic morphism between them is algebraic by Berkovich analytic GAGA \cite[Corollary 3.4.12]{BerkovichSpectral}. 
First, we note that if $f$ is constant, then there is nothing to prove, and so we may assume that $f$ is non-constant. 
If $C_K$ has genus $0$ or $1$, then every map $f\colon C_K\to X_K$ is constant, since $X_K$ is groupless by \autoref{lemma:hyperbolicimpliesgroupless} and \autoref{lemma:grouplessequivgeometric}.(1), and so we may assume that $C_K$ has genus $g\geq 2$. 

Since $C_K$ and $X_K$ are projective, any $k$-morphism $f\colon C_K\to X_K$ is also projective,  in particular closed and quasi-compact, and hence, $f(C_K)$ will surject onto its image.  
The image of $f(C_K)$ must be an irreducible curve inside of $X_K$, and after taking the reduced induced subscheme structure of this image, we may assume that the image is an integral projective curve inside of $X_K$. 
Furthermore, the image of $f$ is a constant curve i.e., a curve of the form $Y_{i,K}$ where $Y_i$ is a curve in $X$. 
By our assumption that $X$ is hyperbolic,  we have that each $Y_{i,K}$ is of general type by \autoref{lemma:genearltypegeometric}. 
For each curve $Y_{i,K}^{\an}$ in $X_K^{\an}$, there are only finitely many non-constant morphisms from $C_K^{\an} \to Y_{i,K}^{\an}$ by \autoref{thm:KobayashiOchiai}.  Let $f_{1,i},\dots,f_{j,i}$ denote the finitely many non-constant morphisms from $C_K^{\an}\to Y_{i,K}^{\an}$. 

Let $\underline{\sU}_{C_K^{\an}} = \brk{(\sU_1,z_1),\dots,(\sU_m,z_m); x_1,\dots,x_m}$ denote a permissible cover and labeling of $C_K^{\an}$. 
We can define a permissible cover of $Y_{i,K}$ as follows:~take any finite collection of connected $K$-affinoids in $\mathbb{U}_{Y_{i,K}^{\an}}$ which form an open cover of $Y_{i,K}^{\an}$ and include the connected $K$-affinoids $(\sU_1,\dots,\sU_m)$ from above. 
We note that this finite collection of connected $K$-affinoids constitutes a permissible cover of $Y_{i,K}$. 
Next, we define a labeling on $Y_{i,K}$ by considering the following collection of points:~any collection of birational points inside of the connected $K$-affinoid which live on $Y_{i,K}^{\an}$ (subject to our Shilov boundary condition) and the finitely many points $\bigcup_{s = 1}^m\bigcup_{t = 1}^j f_{t,i}(x_s)$.  (One may need to add copies of connected $K$-affinoid from $\underline{\sU}_{C_K^{\an}}$ in order to account for all the points in the labeling, but this is fine as there are only finitely many points in $\bigcup_{s = 1}^m\bigcup_{t = 1}^j f_{t,i}(x_s)$.) 
Clearly, this will be a labeling of $Y_{i,K}^{\an}$. 

For all of the other constant $K$-analytic subvarieties $Z_{K}^{\an}$, take any permissible cover and labeling of them, and let $\uusV$ denote the above choices of permissible cover on every constant $K$-analytic subvariety of $X_K^{\an}$. 
Since the definition of the $K$-Kobayashi pseudo-metric involves taking the infimum over all constant $K$-analytic varieties, the choices of permissible covers of $Y_{i,K}^{\an}$ will automatically imply that for any distinct pair of points $x,y\in C_K^{\an}$ and any $k$-morphism $f\colon C_K^{\an} \to X_K^{\an}$, the equality 
\[
d_{X_K^{\an},\uusV}(f(x),f(y)) \leq d_{C_K^{\an},\underline{\sV}_{C_K^{\an}}}(x,y)
\]
will hold. 
\end{proof}

\begin{remark}
We note that \autoref{lemma:distancedecreasing} holds more generally for any constant $K$-analytic variety $Y_K^{\an}$ and for any $k$-analytic morphisms $f\colon Y_K^{\an}\to X_K^{\an}$. 
The only difference in the proof is a keeping track of all of the proper constant $K$-analytic subvarieties of $Y_K^{\an}$. 
In the setting where $Y_K^{\an}$ is a curve,  so there are no proper subvarieties of $Y_K^{\an}$, and since we are interested in studying $k$-analytic morphisms analytifications of constant $K$-analytic curves to $X_K^{\an}$, this setting will suffice for our purposes. 
\end{remark}

\begin{remark}
Let $C_K^{\an}$ be the constant $K$-analytic variety associated to a smooth projective curve $C$, and let $ \underline{\sV}_{C_K^{\an}} $ be a permissible cover and labeling on $C_K^{\an}$. 
In most situations, there does not exist a choice $\underline{\sV}_{X_K^{\an}}$ of permissible cover and labeling on $X_K^{\an}$ such that for any pair of points $x,y\in C_K^{\an}$, we have $\rho_{C_K^{\an},\underline{\sV}_{C_K^{\an}}}(x,y) \geq \rho_{X_K^{\an},\underline{\sV}_{X_K^{\an}}}(f(x),f(y))$ for \textit{every} $k$-analytic morphism $f\colon C_K^{\an} \to X_K^{\an}$. 
This is because the data of a permissible covering and labeling only uses finitely many points of $X_K^{\an}$. Moreover,  if there are infinitely many $k$-morphisms from $f_n\colon C_K^{\an} \to X_K^{\an}$ whose images do not coincide, then one cannot define a permissible cover and labeling on $X_K^{\an}$ that incorporates all of the images of the points in the labeling on $C_K^{\an}$. 
\end{remark}

Next, we will apply the Arzela--Ascoli theorem to show that the space of $k$-analytic morphisms from $C_K^{\an}$ to $X_K^{\an}$ is relatively compact in the space of continuous morphisms from $C_K^{\an}$ to $X_K^{\an}$.

\begin{lemma}\label{lemma:relativelycompact}
Let $C/k$ be a smooth projective curve and let $X/k$  projective hyperbolic variety, and let $C_K^{\an}$ (resp.~$X_K^{\an}$) be the constant $K$-analytic spaces associated to $C$ (resp.~$X$). 
Fix a choice $ \underline{\sV}_{C_K^{\an}} $ of permissible cover and labeling on $C_K^{\an}$. 
The subspace of $k$-analytic maps $\Hom_k^{\an}(C_K^{\an},X_K^{\an})$ in $\mathcal{C}((C_K^{\an},d_{C_K^{\an},\underline{\sV}_{C_K^{\an}} }),(X_K^{\an},d_{X_K^{\an},\uusV}))$ is relatively compact where $\mathcal{C}((C_K^{\an},d_{C_K^{\an},\underline{\sV}_{C_K^{\an}} }),(X_K^{\an},d_{X_K^{\an},\uusV}))$ is endowed with topology of uniform convergence i.e., the topology induced by the metric $e(f,g) :=  \sup_{x\in C_K^{\an}} d_{X_K^{\an},\uusV}(f(x),g(x))$ where $ d_{X_K^{\an},\uusV}$ is the $K$-Kobayashi metric with respect to the choice $\uusV$ of permissible covers and labelings from \autoref{lemma:distancedecreasing}. 
Moreover,  every sequence $(f_n)$ of $k$-analytic maps in $\mathcal{C}((C_K^{\an},d_{C_K^{\an},\underline{\sV}_{C_K^{\an}} }),(X_K^{\an},d_{X_K^{\an},\uusV}))$ admits a subsequence $(f_n')$ which converges uniformly to a continuous map. 
\end{lemma}

\begin{proof}
Let $d_{C_K^{\an},\underline{\sV}_{C_K^{\an}} }$ denote any $K$-Kobayashi pseudo-metric on $C_K^{\an}$. 
By \autoref{lemma:distancedecreasing}, the $K$-Kobayashi pseudo-metric $d_{X_K^{\an},\uusV}$ with respect to $\uusV$ is distance decreasing for any $k$-analytic map $C_K^{\an}\to X_K^{\an}$ i.e., $\Hom_k^{\an}(C_K^{\an},X_K^{\an}) \subset \mathcal{D}((C_K^{\an},d_{C_K^{\an},\underline{\sV}_{C_K^{\an}} }),(X_K^{\an},d_{X_K^{\an},\uusV}))$. 
\autoref{xthm:HyperbolicImpliesLeaIntro} implies that $X_K^{\an}$ is $K$-Lea hyperbolic, and so $d_{X_K^{\an},\uusV}$ is a metric. 
Using \autoref{lemma:Berkseparable},  \autoref{thm:ArzelaAscoli} tells us that 
$\Hom_k^{\an}(C_K^{\an},X_K^{\an})$ is relatively compact in $\mathcal{C}((C_K^{\an},d_{C_K^{\an},\underline{\sV}_{C_K^{\an}} }),(X_K^{\an},d_{X_K^{\an},\uusV}))$. 
The second statement follows from the definition of relatively compact. 
\end{proof}

Now,  we need to translate the relative compactness of 
\[
\Hom_k^{\an}(C_K^{\an},X_K^{\an}) \subset \mathcal{C}((C_K^{\an},d_{C_K^{\an},\underline{\sV}_{C_K^{\an}} }),(X_K^{\an},d_{X_K^{\an},\uusV}))
\] 
to relative compactness in $\underline{\Hom}_K^{\an}(C_K^{\an},X_K^{\an})$. 
In particular, for every sequence $(f_n)$ of $k$-analytic maps,  we will use the second statement of \autoref{lemma:relativelycompact} to construct a complete field extension $K'/K$ and a $K'$-analytic morphism $F\colon C_{K'}^{\an} \to X_{K'}^{\an}$ such that there exists a sub-sequence of $(f_n)$ which converges to $\pi_{K'/K}(F)$ in $\underline{\Hom}_K^{\an}(C_K^{\an},X_K^{\an})$ where $\pi_{K'/K}$ is the base change morphism.

\begin{prop}\label{prop:relativelycompact}
Let $C/k$ be a smooth projective curve and let $X/k$  projective hyperbolic variety, and let $C_K^{\an}$ (resp.~$X_K^{\an}$) be the constant $K$-analytic varieties associated to $C$ (resp.~$X$). 
The subspace of $k$-analytic maps $\Hom_k^{\an}(C_K^{\an},X_K^{\an})$ in $\underline{\Hom}_K^{\an}(C_K^{\an},X_K^{\an})$ is relatively compact. 
\end{prop}

\begin{proof}
First, we note that $\underline{\Hom}_K^{\an}(C_K^{\an},X_K^{\an})$ is a Fr\'echet--Urysohn space by \cite[Th\'eor\`eme 5.3.]{Poineau:AngelicBerkovich}. 
This implies that the closure of $\Hom_k^{\an}(C_K^{\an},X_K^{\an})$ in $\underline{\Hom}_K^{\an}(C_K^{\an},X_K^{\an})$ is its sequential closure. 
Furthermore, this tells us that proving that the subspace of $k$-analytic maps $\Hom_k^{\an}(C_K^{\an},X_K^{\an})$ is relatively compact in $\underline{\Hom}_K^{\an}(C_K^{\an},X_K^{\an})$ amounts to showing that any sequence of $k$-analytic maps admits a converging sub-sequence in $\underline{\Hom}_K^{\an}(C_K^{\an},X_K^{\an})$.

Let $(f_n)$ be a sequence $k$-analytic maps in $\Hom_k^{\an}(C_K^{\an},X_K^{\an})$. 
By considering this as a sequence in $\mathcal{C}((C_K^{\an},d_{C_K^{\an},\underline{\sV}_{C_K^{\an}} }),(X_K^{\an},d_{X_K^{\an},\uusV}))$, \autoref{lemma:relativelycompact} tells us that there exists a sub-sequence $(f_n')$ which uniformly converges to some continuous map $f$.  Let $x$ be any point of $C_K^{\an}$ and consider its image $f(x)\in f(C_K^{\an}) \subset X_K^{\an}$. Pick an affinoid neighborhood $\sZ_{f(x)}'$ of $f(x)$ such that $f(x)$ is contained in the interior $\sU_{f(x)}'$ of $\sZ_{f(x)}'$; note this is possible since $X_K^{\an}$ is boundaryless. 
Since $f$ is continuous, $f^{-1}(\sU_{f(x)}')$ is a boundaryless neighborhood of $x\in C_K^{\an}$, and 
so we can find an affinoid neighborhood $\sZ_x$ of $x$ such that $x$ is contained in the interior $\sU_x$ of $\sZ_x$. 
We may repeat this process for each $x\in C_K^{\an}$, and since $C_K^{\an}$ is compact (\autoref{lemma:Berkseparable}),  we may cover $C_K^{\an}$ by finitely many of theses $\sU_x$,  say $C_K^{\an} = \bigcup_{i=1}^m\sU_{x_i}$.

Consider the restriction of the sequence of $k$-analytic maps $(f_n')$ to $\sZ_{x_1}$. 
By \cite[Theorem 6.4.3/1]{BGR} and \cite[Proposition 2.2]{Vazquez_NormalFamily}, we may extend these restricted morphisms to a sequence of $K$-analytic maps $B^{r_1}(0,1) \to B^{s_1}(0,1)$ for some $r_1$ and $s_1$, and we have that these morphisms agree with $f_{n}'$ when restricted to $\sZ_{x_1}$.  
After restricting these extended maps to $B^{r_1}(0,1)^-$, we obtain a sequence of rigid points (i.e., $K$-analytic morphisms) 
\[
(\widehat{f}_{n}') \in \Mor(B^{r_1}(0,1)^-,B^{s_1}(0,1)),\]
where $\Mor(B^{r_1}(0,1)^-,B^{s_1}(0,1))$ is the space from \autoref{thm:Morfacts}.  
\autoref{thm:Morfacts}.(1,4) tells us that sequence of rigid points $(\widehat{f}_{n}') \in \Mor(B^{r_1}(0,1)^-,B^{s_1}(0,1))$ admits a sub-sequence $(\widehat{f}_{n_1}')$ which will converge to some point $\alpha_1 \in \Mor(B^{r_1}(0,1)^-,B^{s_1}(0,1))$. 
Now consider the restriction of the sub-sequence of $k$-analytic maps $(f_{n_1}')$ to $\sZ_{x_2}$ and repeat the above argument to deduce that sequence of rigid points $(\widehat{f}_{n_1}') \in \Mor(B^{r_2}(0,1)^-,B^{s_2}(0,1))$ admits a sub-sequence $(\widehat{f}_{n_2}')$ which will converge to some point $\alpha_2 \in \Mor(B^{r_2}(0,1)^-,B^{s_2}(0,1))$. 
By doing this procedure $m$-many times, we find a sub-sequence $(f_{n}'')$ of $(f_n')$ which converges uniformly to $f$ such that for each $i=1,\dots,m$, when we apply the above procedure to consider $(f_{n}'')$ as a sequence of $K$-analytic maps $(\widehat{f}_{n}'') $ in $ \Mor(B^{r_i}(0,1)^-,B^{s_i}(0,1))$, these morphisms agree with $(f_{n}'')$ when restricted to $\sZ_{x_i}$ and $(\widehat{f}_{n}'')$ converges to $\alpha_i \in \Mor(B^{r_i}(0,1)^-,B^{s_i}(0,1))$.

Let $K'$ denote the compositum of the fields $\sH(\alpha_i)$ for $i=1,\dots,m$ and recall the construction of the continuous map \[
\sigma_{K'/K,\infty,i}\colon \Mor(B^{r_i}(0,1)^-,B^{s_i}(0,1)) \hookrightarrow \Mor(B^{r_i}(0,1)^-,B^{s_i}(0,1))_{K'} \cong \Mor(B^{r_i}(0,1)^-_{K'},B^{s_i}(0,1)_{K'})
\] 
from \eqref{eqn:infinitesection}. 
For each $i=1,\dots,m$, consider $\sigma_{K'/K,\infty,i}(\alpha_i) \in \Mor(B^{r_i}(0,1)^-_{K'},B^{s_i}(0,1)_{K'})$. 
By definition of $K'$, the image $\sigma_{K'/K,\infty,i}(\alpha_i)$ is a rigid point in $\Mor(B^{r_i}(0,1)^-_{K'},B^{s_i}(0,1)_{K'})$, and hence corresponds to a $K'$-analytic morphism (cf.~\cite[Theorem 4.8.(i)]{Vazquez_NormalFamily}). 
Since $C_K^{\an} = \bigcup_{i=1}^m\sU_{x_i}$, we have that $C_{K'}^{\an}$ is covered by $\bigcup_{i=1}^m \sU_{x_i,K'}$ and hence by $\bigcup_{i=1}^m \sZ_{x_i,K'}$. 
For each $i=1,\dots,m$, restrict the $K'$-analytic morphism $\sigma_{K'/K,\infty,i}(\alpha_i)$ to $\sZ_{x_i,K'}$. 
We claim that these restricted morphisms glue together to a $K'$-analytic morphism $F\colon C_{K'}^{\an} \to X_{K'}^{\an}$.  
First, we may glue them to construct a continuous map. 
It suffices to check that the the $K'$-analytic morphisms $\sigma_{K'/K,\infty,i}(\alpha_i)$ agree on overlaps. 
For each $i=1,\dots,m$, the convergence in $\Mor(B^{r_i}(0,1)^-_{K'},B^{s_i}(0,1)_{K'})(K')$, the subspace of rigid points on $\Mor(B^{r_i}(0,1)^-_{K'},B^{s_i}(0,1)_{K'})$, is compact convergence since $B^{s_i}(0,1)_{K'} $ is compact and Hausdorff. 
To check that we have agreement on overlaps, note that for any pair $i,j$, $\sZ_{x_i,K’} \cap \sZ_{x_j,K’} = \sZ_{ij,K’}$ where $\sZ_{ij}$ is some affinoid (hence compact) subspace of $C_{K}^{\an}$ which follows from separatedness of $C_{K}^{\an}$. 
Since $(\sigma_{K’/K,\infty,i}(\widehat{f}_{n}'')_{| \sZ_{ij,K’}})$ is equal to $(\sigma_{K’/K,\infty,j}(\widehat{f}_{n}'')_{| \sZ_{ij,K’}})$, their respective limits $\sigma_{K’/K,\infty,i}(\alpha_i)_{| \sZ_{ij,K’}}$ and  $\sigma_{K’/K,\infty,j}(\alpha_j)_{| \sZ_{ij,K’}}$ are equal, and so we can construct a globally defined continuous map $F\colon C_{K'}^{\an} \to X_{K'}^{\an}$. 
Moreover, we can define an atlas on $C_{K'}^{\an}$ and on $X_{K'}^{\an}$ using the $\sU_{x_i,K'}$, and with this, we have that $F$ is in fact a $K'$-analytic morphism as in \autoref{defn:analyticmorphism}. 
%

To conclude our proof, consider the base change morphism
\[
\pi_{K'/K}\colon  \underline{\Hom}_{K'}^{\an}(C_{K'}^{\an},X_{K'}^{\an})\cong  \underline{\Hom}_K^{\an}(C_K^{\an},X_K^{\an})_{K'} \to  \underline{\Hom}_K^{\an}(C_K^{\an},X_K^{\an})
\]
and its section
\[
\sigma_{K'/K}\colon \underline{\Hom}_K^{\an}(C_K^{\an},X_K^{\an}) \hookrightarrow  \underline{\Hom}_{K'}^{\an}(C_{K'}^{\an},X_{K'}^{\an}).
\]
We claim that $(\sigma_{K'/K}(f_{n}''))$ converges to $F$ in $\underline{\Hom}_{K'}^{\an}(C_{K'}^{\an},X_{K'}^{\an})$. 
Note that $(\sigma_{K'/K}(f_{n}''))$ and $F$ are $K'$-analytic morphisms i.e., they lie in $\underline{\Hom}_{K'}^{\an}(C_{K'}^{\an},X_{K'}^{\an})(K')$. 
By \autoref{lemma:Kpointscompactopen}, the subspace topology on $\underline{\Hom}_{K'}^{\an}(C_{K'}^{\an},X_{K'}^{\an})(K')$ coincides with the compact-open topology, and hence it suffices to show that $(\sigma_{K'/K}(f_{n}''))$ converges to $F$ in the compact-open topology. 
This convergence follows directly from the arguments in \cite[Proof of Theorem 4.8.(i)]{Vazquez_NormalFamily}.  
More precisely,  in \textit{loc.~cit.~}notation, the map $\text{Ev}_{K'}(\cdot)$ is continuous on rigid points of $ \Mor(B^r(0,1)^-_{K'},B^s(0,1)_{K'})$ and then convergence of $(\sigma_{K'/K}(f_{n}''))$ to $F$ follows from construction. 
Since $\pi_{K'/K}$ is continuous and $\sigma_{K'/K}$ is a section of $\pi_{K'/K}$, we have that 
\[
\pi_{K'/K}((\sigma_{K'/K}(f_{n}''))) = (f_{n}'') \to \pi_{K'/K}(F)
\]
in $\underline{\Hom}_K^{\an}(C_K^{\an},X_K^{\an})$. 
Therefore, we have shown that sequence of $k$-analytic maps admits a converging sub-sequence i.e.,  $\Hom_k(C_K^{\an},X_K^{\an})$ is relatively compact in $\underline{\Hom}_K^{\an}(C_K^{\an},X_K^{\an})$.  
\end{proof}

\begin{proof}[Proof of \autoref{xthm:main1}]
Let $L$ be an algebraically closed field of characteristic zero and $X/L$ be a  projective hyperbolic variety. 
By \autoref{thm:equivalentbounded}, it suffices to show $X$ is $1$-bounded over $L$. 

To begin,  we may and do assume that $X$ is defined over a countable, algebraically closed field $k$.  
Indeed, the assumption of finite type implies that $X$ is determined by a finite amount of data and each piece of data is defined over a finitely generated field extension of the prime subfield of $L$. Since $L$ has characteristic zero, the prime subfield of $L$ is isomorphic to $\mQ$, and hence countable. Furthermore,  $X$ can be defined over an algebraic closure $k$ of a finitely generated extension of $\mQ$, which is countable.  
Let $K$ denote the completion of an algebraic closure of the formal Laurent series with coefficients in $k$ with respect to the $t$-adic valuation.

Using \autoref{prop:relativelycompact} and Berkovich analytic GAGA \cite[Corollary 3.4.12]{BerkovichSpectral}, we have that $\Hom_k(C_K,X_K)$ is relatively compact in $\underline{\Hom}_K(C_K,X_K)^{\an}$. 
Consider the base change morphism
\[
\pi_{K/k}\colon \underline{\Hom}_K(C_K,X_K)^{\an}\cong \underline{\Hom}_k(C,X)^{\an}_K \to \underline{\Hom}_k(C,X)^{\an}
\]
where we are using the Berkovich analytification of schemes over the trivially valued field $k$ to define the codomain of $\pi_{K/k}$. 
Note that $ \underline{\Hom}_K(C_K,X_K)^{\an}$ (resp.~$\underline{\Hom}_k(C,X)^{\an}$) is Hausdorff by \cite[Theorem 3.4.8]{BerkovichSpectral} (resp.~\cite[Theorem 3.5.3]{BerkovichSpectral}) and since $\pi_{K/k}$ is continuous,  the image $\pi_{K/k}(\Hom_k(C_K,X_K)) \cong \Hom_k(C,X)$ is relatively compact in $\underline{\Hom}_k(C,X)^{\an}$. 
Indeed, we have that $\overline{\pi_{K/k}(\Hom_k(C_K,X_K)) } \cong \overline{\pi_{K/k}(\overline{\Hom_k(C_K,X_K)) }}$, and the latter is compact since the continuous image of compact is compact and the closure of a compact subset in a Hausdorff space is compact. 

By \cite[Theorem 3.5.1.(i)]{BerkovichSpectral}, the closure of $\Hom_k(C,X)$ in $\underline{\Hom}_k(C,X)^{\an}$ is the set of points $ x\in \underline{\Hom}_k(C,X)^{\an}$ for which the valuation on $\sH(x)$ is trivial, and we denote this closure by $(\underline{\Hom}_k(C,X)^{\an})^{\triv}$. 
We have that $(\underline{\Hom}_k(C,X)^{\an})^{\triv}$ is compact because $\Hom_k(C,X)$ is relatively compact in $\underline{\Hom}_k(C,X)^{\an}$. 
Moreover,  \cite[Theorem 3.5.1.(i)]{BerkovichSpectral} tells us that 
\[
\iota_{ \underline{\Hom}_k(C,X)}\colon \underline{\Hom}_k(C,X)^{\an} \to \underline{\Hom}_k(C,X)
\] admits a section over $(\underline{\Hom}_k(C,X)^{\an})^{\triv}$. 
Therefore, we have a continuous bijection\footnote{Note that this is not a homeomorphism as $\underline{\Hom}_k(C,X)$ is not Hausdorff in the Zariski topology.} 
\[
(\underline{\Hom}_k(C,X)^{\an})^{\triv} \to  \underline{\Hom}_k(C,X).
\]
Since $(\underline{\Hom}_k(C,X)^{\an})^{\triv} $ is compact, we have that $\underline{\Hom}_k(C,X)$ is quasi-compact 
and hence of finite type. 
Therefore, $X$ is $1$-bounded over $k$ and by \autoref{thm:boundedgeometric}, $X$ is also $1$-bounded over $L$. 
\end{proof}

\begin{proof}[Proof of \autoref{xcoro:main1}]
\autoref{xthm:main1} shows that (1)$\Rightarrow$(2). 
By \cite[Proposition 4.4]{JKam}, we know that (2)$\Rightarrow$(3), and finally \autoref{lemma:grouplessequivgeometric} and \cite[Remark 3.4 \& Lemma 3.23]{JavanpeykarXie:Finiteness} imply that (3)$\Rightarrow$(1). 
\end{proof}

\begin{proof}[Proof of \autoref{xcoro:main2}]
By \autoref{xthm:main1}, we have that a  projective hyperbolic variety $X$ over $L$ is $1$-bounded over $L$, and \autoref{thm:equivalentbounded} asserts that $1$-boundedness is equivalent to boundedness (\autoref{defn:bounded}). 
Recall that $X$ does not admit any rational curves by \autoref{lemma:grouplessequivgeometric} and hence \cite[Proposition 3.9]{JKam} allows us to deduce that for any normal projective variety $Y/L$, the Hom-scheme $\underline{\Hom}_L(Y,X)$ is a finite disjoint union of projective schemes and moreover is projective.
\end{proof}

\begin{proof}[Proof of \autoref{xcoro:main3}]
This follows from \autoref{xthm:main1} and \cite[Theorem 1.5]{Javanpeykar:FiniteMapsSurfaces}. 
\end{proof}

\begin{proof}[Proof of \autoref{xcoro:main4}]
This follows from \autoref{xthm:main1} and \cite[Remark 4.3 \& Lemma 4.6]{JKam}. 
\end{proof}

\begin{proof}[Proof of \autoref{xcoro:main5}]
This follows from \autoref{xcoro:main4} and \cite[Theorem 1.6]{Javanpyekar:ArithmeticHyperbolicity}. 
\end{proof}

  \bibliography{refs}{}
\bibliographystyle{amsalpha}

 \end{document}